\newtheorem{theorem}{Theorem}[section]
\newtheorem{proposition}[theorem]{Proposition}
\newtheorem{assumption}[theorem]{Assumption}
\newtheorem{lemma}[theorem]{Lemma}
\newtheorem{remark}[theorem]{Remark}
\newtheorem{example}[theorem]{Example}
\newcommand\blfootnote[1]{%
  \begingroup
  \renewcommand\thefootnote{}\footnote{#1}%
  \addtocounter{footnote}{-1}%
  \endgroup
}
\author[1]{David Lannes}
\author[1]{Mathieu Rigal}
\affil[1]{Institut de Mathématiques de Bordeaux, Université de Bordeaux et CNRS UMR 5251, 351 Cours de la libération 33405 Talence Cedex, France}
\date{February 6, 2024}
\begin{document}

\title{General boundary conditions for a Boussinesq model with varying bathymetry}

\maketitle\thispagestyle{empty}

\begin{abstract}
  This paper is devoted to the theoretical and numerical investigation of the initial boundary value problem for a system of equations used for the description of waves in coastal areas, namely, the Boussinesq-Abbott system in the presence of topography. We propose a procedure that allows one to handle very general linear or nonlinear boundary conditions. It consists in reducing the problem to a system of conservation laws with nonlocal fluxes and coupled to an ODE. This reformulation is used to propose two hybrid finite volumes/finite differences schemes of first and second order respectively. The possibility to use many kinds of boundary conditions is used to investigate numerically the asymptotic stability of the boundary conditions, which is an issue of practical relevance in coastal oceanography since asymptotically stable boundary conditions would allow one to reconstruct a wave field from the knowledge of the boundary data only, even if the initial data is not known.
\end{abstract}

\vspace{3em}
{\centering\itshape\noindent
in memory of Vassilios A. Dougalis\par
}

\vspace{3em}\noindent
{\small This work was partially supported by the Institut des Mathématiques pour la Planète Terre and by the BOURGEONS project, grant ANR-23-CE40-0014-01 of the French National Research Agency (ANR).\par}

\blfootnote{
{\noindent\itshape Mathematics Subject Classification.} 35B30, 35G61, 35Q35, 65M08, 76B15.}
\blfootnote{
{\itshape Keywords.} Boussinesq-Abbott model,
Initial boundary value problem,
Inhomogeneous boundary conditions,
Dispersive boundary layer,
Nonlocal flux,
Finite volumes.}

\section{Introduction}

\subsection{General motivation and objectives}

The dynamics of water waves over a non flat bottom in the littoral area  is a subject of great interest with far-reaching societal and economic stakes. Among them, one can mention safety aspects linked to the risk of flooding and sailing accidents, the impact of waves on the morphodynamics of the beach through sedimentation and erosion, or the issue of marine renewable energies, for which a good knowledge of the waves dynamics would be beneficial to increase the yield of offshore mooring systems or to choose pertinent wave farm locations. When considering coastal flows, it is not always possible to neglect dispersive effects which play an important role in the shoaling zone and in the generation of extreme waves. In order to accurately describe these phenomena, one therefore needs a model of shallow free surface flows that takes into account both nonlinear and dispersive effects.   In this regard, the family of weakly nonlinear vertically-averaged Boussinesq-type systems is pertinent as it presents a good compromise between accuracy and reduced complexity compared for instance to the shallow water system, not accurate enough because it neglects dispersive effects, or to the fully nonlinear Serre-Green-Naghdi equations, more accurate but also more complex \cite{LannesNL2021}.

For practical applications, one usually considers
a bounded spatial domain, and one has to solve an initial boundary value problem: the solution is computed at all times in terms of the initial data in the interior of the domain, as well as of the boundary data imposed at all times on its boundary. In the presence of dispersive terms, the question of boundary conditions is difficult and widely open, yet it is crucial for applications. The purpose of this work is to investigate this issue both theoretically and numerically in the case of the one dimensional Boussinesq-Abbott model with a non flat bottom, namely,
$$
  \left\{
  \begin{array}{l}
    \partial_t \zeta + \partial_x q = 0 \\
    (1  + h_b{\mathcal T}_b)\partial_t q + \partial_x (\frac{1}{h}q^2)+{\mathtt g} h \partial_x \zeta=0
  \end{array}\right.\quad t>0,\quad x\in (0,\ell),
$$
where $\zeta(t,x)$ denotes the elevation of the fluid at time $t$ and horizontal coordinate $x$ with respect to the rest state $z=0$; $q(t,x)$ is the horizontal discharge of the fluid; $h(t,x)=h_0 + \zeta(t,x) - b(x)$ is the water height, with $h_0$ the characteristic depth and $z=-h_0+b(x)$ a parametrization of the bottom topography; ${\mathtt g}$ is the acceleration of gravity; and finally ${\mathcal T}_b$ is a positive second order differential operator, whose exact expression is not important at this point. This model was proposed by Abbott~\cite{Abbott_Petersen_Skovgaard_1978} as a variant of the original Boussinesq-Peregrine model, which is written in terms of $\zeta$ and the average velocity $\overline{v}=q/h$; see~\cite{FILIPPINI2015109} for a comparison of these two approaches.

More specifically, we propose a new method that enables to prescribe a nonlinear function of the unknowns $(\zeta,q)$ at the boundaries of the domain by a given function of time. The possibility to enforce such general non homogeneous boundary conditions is a prerequisite to be able to go beyond simple academic test-cases such as solitary waves, and instead force realistic and complex wave fields in the domain, which typically feature swell and infra-gravity waves characterized by very disparate spatial scales and multi-directional propagation. This question is also related to that of asymptotic stability in the following sense: if we know the boundary data for all times (via buoys measurements for instance) but not the initial condition, can we expect the solution computed by solving the initial boundary value problem with an arbitrary initial data to converge in time to the real solution, and how does the choice of boundary conditions impact this convergence? This is a completely open theoretical problem, for which we propose here some numerical insight based on a new second order numerical scheme.

\subsection{State of the art and contribution}

Before describing the state of the art for the mathematical and numerical analysis of the initial boundary value problem associated with nonlinear dispersive models such as the Boussinesq-Abbott equations, let us recall some elements of the hyperbolic theory. Indeed, the Boussinesq-Abbott equations are a dispersive perturbation of the nonlinear shallow water equations that can be deduced from the Boussinesq-Abbott equations by neglecting the operator ${\mathcal T}_b$, namely,
$$
\begin{cases}
\partial_t \zeta + \partial_x q=0,\\
\partial_t q + \partial_x (\frac{1}{h}q^2)+{\mathtt g} h \partial_x \zeta=0
\end{cases}
\quad t>0,\quad x\in (0,\ell).
$$
This model belongs to the class of hyperbolic systems for which initial boundary value problems are well understood (see for instance~\cite{BenzoniSerre} as well as~\cite{IguchiLannes} for a detailed analysis of the one dimensional case). As we shall see, despite the fact that the Boussinesq-Abbott system is a perturbation of the nonlinear shallow water equations, the behavior of the associated initial boundary value problems differ drastically.
When applied to the nonlinear shallow water equations, the general theory of hyperbolic systems implies that under certain well identified conditions, it is possible to solve the equations on a finite domain if we know $\zeta$ and $q$ at $t=0$, and if we impose at all times one scalar boundary data at $x=0$ and $x=\ell$: typically, one can impose $\zeta$, $q$ or the incoming Riemann invariant (see in particular V. Dougalis' contribution for the numerical analysis of characteristic boundary conditions~\cite{antonopoulos2017}). The missing boundary data is then recovered locally through the method of characteristics applied to the outgoing Riemann invariants. This is a fundamental difference compared to dispersive models, which do not admit Riemann invariants due to their non hyperbolic nature, and a consequence is that it is no longer possible to recover the missing boundary data using local arguments as pointed in~\cite{Lannes_Weynans_2020}.

V. Dougalis was a pioneer in the analysis of initial boundary value problems for Boussinesq-type equations with non trivial (i.e. non periodic) boundary conditions. In \cite{antonopoulos2009} he considered systems of the Bona-Smith family (for which a dispersive term is also present in the equation for the surface elevation) with non homogeneous boundary conditions on the surface elevation and on the velocity. In the case of homogeneous boundary conditions, he then proposed a numerical Galerkin-finite element scheme \cite{antonopoulos2010}; he even addressed  the two-dimensional case in \cite{dougalis2009,dougalis2010} where he proposed a mathematical and numerical analysis for various types of homogeneous boundary conditions, still for systems of the Bona-Smith family. In \cite{antonopoulos2012} he considered the "classical" Boussinesq system (corresponding to the Boussinesq-Peregrine system with a flat topography), with a homogeneous boundary condition on the velocity. He also considered the case of a variable topography \cite{kounadis2021} with homogeneous condition on the velocity or approximate transparent boundary conditions. The contribution of V. Dougalis and his coauthors is without doubt the most important to the theoretical and numerical analysis of the initial boundary value problem of Boussinesq-type equations. In the wake of these works, we propose here to investigate the case of non homogeneous boundary conditions.

Several authors also considered the related problem of transparent boundary conditions: for the linear KdV, BBM or Boussinesq equations, this was addressed in \cite{besse2017,besse2018,steinstraesser2019domain,kazakova2020} while for the nonlinear case approaches based on  perfectly matched layer (PML) methods have been proposed  for the KdV equation \cite{besse2022} and for a hyperbolic relaxation of the Serre-Green–Naghdi equations \cite{kazakova2018}. This approach can be adapted to handle the initial boundary value problem; instead of imposing a boundary condition, one imposes an initial data in the PML. This is also the idea behind the source function method \cite{Wei}. These methods are robust and popular but have the drawback of being only approximations of the initial boundary value problems and to require to work on a considerably larger computational domain, at the cost of computational time. Besides let us mention~\cite{parisot} where a class of boundary conditions for  time-discrete Serre-Green-Naghdi equations are considered.

It was shown recently that wave-structure interaction problems for the Boussinesq equations could be reduced to  initial boundary value problems with non homogeneous boundary conditions. A new method was proposed to handle this problem theoretically~\cite{BreschLannesMetivier,BeckLannes} and numerically~\cite{beck2023numerical}; it was also generalized in~\cite{Lannes_Weynans_2020} to handle generating boundary conditions (the boundary condition is on the surface elevation $\zeta$). All these references considered the case of the Boussinesq-Abbott model with a flat topography, and with boundary conditions imposed on $\zeta$ or $q$. For applications to coastal oceanography it is however important to consider a non-flat topography, as well as other kinds of boundary conditions. The goal of this article is to generalize the above method to propose a solution to these two problems, and to design a general second order well-balanced scheme to approximate the solutions.

\subsection{Organization of the paper}

We first propose in Section~\ref{sec:models-BC} a theoretical analysis of the initial boundary value problem associated with the Boussinesq-Abbott when the boundary conditions are imposed on the surface elevation at $x=0$ and $x=\ell$. In the case of a flat bottom, considered in~\S \ref{sect:flat} this is a generalization of \cite{Lannes_Weynans_2020} to the case of a finite interval; in the case of a non-flat topography studied in~\S \ref{subsec:boussinesq-peregrine}, the analysis is significantly more involved because some crucial commutation properties used in~\cite{Lannes_Weynans_2020} are no longer true. Well-posedness is proved using a reformulation of the original initial boundary value problem as a simple initial value problem (no boundary condition); a well-balanced version of this formulation is proposed in~\S \ref{subsec:unify-models-wb}, on which our numerical scheme is based on.

We then consider the case of general boundary conditions in Section~\ref{subsec:generic-bnd}. We work in a general framework where a (possibly nonlinear) function of $\zeta$ and $q$ is imposed at each endpoint of the domain; these functions are called \emph{input} functions. There exist also in this framework so called \emph{output} functions and a reconstruction mapping that allows to recover $\zeta$ and $q$ from the knowledge of the input and output functions, see~\S \ref{subsectoutput}. We show in~\S \ref{secteqout} that the output functions can be recovered through the resolution of a first or second order ODE; the number of initial conditions that must be provided to solve this ODE depends of course on its order; in addition compatibility conditions must also be imposed on the data. The analysis of these two types of conditions is performed in~\S \ref{sectcompat}. The well-posedness of the original initial boundary value problem with general boundary conditions can then be established in  \S \ref{sectWP}. We end this section by commenting in~\S \ref{sectAS} on an open theoretical problem of high practical interest: the asymptotical stability of the boundary conditions. In other terms, the question is to know whether the solution of the initial boundary value problem can be asymptotically recovered after a transitory regime if we start from a different initial data but impose the same boundary data. 

We then present in Section~\ref{sectschemes} the numerical schemes we use in our numerical simulations. We use an hybrid finite volumes/finite differences approach. We propose both a first order Lax-Friedrichs scheme in~\S \ref{subsec:1st-order-discr} and a second order MacCormack scheme in~\S \ref{subsec:2nd-order-discr}.

Numerical simulations are finally presented in Section \ref{sectnum}. We first show in~\S \ref{sectnumres} the ability of our numerical schemes to solve non trivial initial boundary value problems in the presence of topography. Several kinds of boundary conditions are investigated. We finally investigate in \S \ref{sectnumAS} the issue of asymptotic stability. It is in particular shown that numerical asymptotic stability is achieved by imposing nonlinear boundary conditions (the Riemann invariants associated with the nonlinear shallow water equations), but not if we impose the surface elevation or the discharge at the boundaries.  This shows the relevance of being able to enforce generic nonlinear boundary conditions when dealing with complex applications of initial boundary value problems.

\medbreak
\noindent
{\bf Acknowledgement.} The authors warmly thank Philippe Bonneton for many fruitful discussions and for his insights on the physical motivations of this work.

\section{The Boussinesq-Abbott system with boundary conditions on the surface elevation}
\label{sec:models-BC}

We propose here a theoretical analysis of the initial boundary value problem for the Boussinesq-Abbott system with boundary conditions on the surface elevation. We first treat in \S \ref{sect:flat} the case of flat bathymetries; this problem was considered in \cite{Lannes_Weynans_2020} on the half-line; we extend it here to treat the case of a finite interval, with boundary conditions on the surface elevation at both ends. We then consider in~\S \ref{subsec:boussinesq-peregrine} the Boussinesq-Abbott equations with topography. As in the case of flat topography, it is possible to reduce the problem to an initial boundary value problem, which we use to prove well-posedness, but the analysis is considerably more difficult because the operator ${\mathcal T}_b$ does not commute with space derivatives. In order to prepare the ground for our numerical scheme, a well balanced version of this reformulation is proposed in \S \ref{subsec:unify-models-wb}.

\subsection{The case of flat bathymetries}\label{sect:flat}

We consider here the case of a flat bathymetry; the Boussinesq-Abbott model  then reduces to 
\begin{align}\label{eq:BA}
  \left\{
  \begin{array}{l}
    \partial_t \zeta + \partial_x q = 0 \\
    (1  - \frac{h_0^2}{3}\partial_x^2)\partial_t q + \partial_x f_{\text{NSW}} = 0
  \end{array}\right.\quad \text{in}\ (0,\ell)\ ,
\end{align}
where we recall that 
$$ 
f_{\text{NSW}}=f_{\rm NSW}(\zeta,q)=\frac{1}{2} {\mathtt g} (h^2-h_0^2) +\frac{1}{h}q^2,
$$
where $h=h_0+\zeta$. The initial boundary value problem for this system was studied in \cite{Lannes_Weynans_2020} on the half-line $(0,\infty)$, with a prescribed boundary value on the surface elevation $\zeta$ at $x=0$ (see also \cite{beck2023numerical} for the case of a prescribed boundary value on $q$). We extend this result here to the case of a finite interval $(0,\ell)$ with prescribed boundary values on $\zeta$ at $x=0$ and $x=\ell$.

\subsubsection{Notations}\label{subsec:reformulation-BL}
We first need to introduce $R_0$,  the inverse of $   (1  - \frac{h_0^2}{3}\partial_x^2)$ with homogeneous Dirichlet boundary data at $x=0$ and $x=\ell$; more precisely, we set
\begin{align}
\label{defR0}
  R^0 :f\in L^2(0,\ell)\longmapsto u\in H^2(0,\ell)\quad \text{solving}\quad
  \left\{
  \begin{array}{l}
    \big[1 - \frac{h_0^2}{3} \partial_x^2 \big] u = f \\ u(0) = u(\ell) = 0
  \end{array}\right.
\end{align}
(note that $R^0$ is also well defined on $H^{-1}(0,\ell)$ with values in $H^1(0,\ell)$).
Since, contrary to \cite{Lannes_Weynans_2020}, we work on a finite interval, we also need to introduce ${\mathfrak s}^{(0)}$ and ${\mathfrak s}^{(\ell)}$ the solutions of the equations of the homogeneous equation but with non homogeneous boundary conditions at the left and right boundaries respectively,
\begin{equation}\label{defs0l}
  \left\{
  \begin{array}{l}
    \big[1 - \frac{h_0^2}{3} \partial_x^2 \big] {\mathfrak s}^{(0)} = 0 \\ {\mathfrak s}^{(0)}(0) = 1, \qquad {\mathfrak s}^{(0)}(\ell) = 0
  \end{array}\right.
\quad\mbox{ and }\quad
 \left\{
  \begin{array}{l}
    \big[1 - \frac{h_0^2}{3} \partial_x^2 \big] {\mathfrak s}^{(\ell)} = 0 \\ {\mathfrak s}^{(\ell)}(0) = 0, \qquad {\mathfrak s}^{(\ell)}(\ell) = 1
  \end{array}\right. .
\end{equation}
We also denote by ${\mathfrak S}'$ the matrix
\begin{equation}\label{defSprime}
{\mathfrak S}'=\begin{pmatrix}
({\mathfrak s}^{(0)})'(0)  & ({\mathfrak s}^{(\ell)})'(0) \\
({\mathfrak s}^{(0)})'(\ell)  & ({\mathfrak s}^{(\ell)})'(\ell) 
\end{pmatrix}.
\end{equation}
\begin{remark}
One can compute ${\mathfrak s}^{(0)} $ and ${\mathfrak s}^{(\ell)}$ explicitly, namely,
\begin{equation}\label{eq:s-sinh}
{\mathfrak s}^{(0)} (x) =\frac{\sinh(\sqrt{3}\frac{(\ell-x)}{h_0})}{\sinh(\sqrt{3}\frac{\ell}{h_0})},
\qquad
{\mathfrak s}^{(\ell)}(x)=  \frac{\sinh(\sqrt{3}\frac{x}{h_0})}{\sinh(\sqrt{3}\frac{\ell}{h_0})},
\end{equation}
from which an explicit expression for ${\mathfrak S}'$ easily follows.
\end{remark}
We finally introduce the inverse of $   (1  - \frac{h_0^2}{3}\partial_x^2)$ with homogeneous Neumann boundary data at $x=0$ and $x=\ell$;
\begin{align}
\label{defR1}
  R^1:f\in L^2(0,\ell)\longmapsto u\in H^2(0,\ell)\quad \text{solving}\quad
  \left\{
  \begin{array}{l}
     \big[1 - \frac{h_0^2}{3} \partial_x^2 \big]u = f \\  \partial_xu(0) = \partial_x u(\ell) = 0.
  \end{array}\right.\ .
\end{align}
An important observation is the following commutation property,
\begin{equation}\label{R0R1comm}
\forall f \in L^2(0,\ell), \qquad R_0\partial_x f=\partial_x R_1 f.
\end{equation}

\subsubsection{Well-posedness of the initial boundary value problem}

We prove here that the initial boundary value problem formed by \eqref{eq:BA} with boundary conditions 
\begin{equation}\label{BCeq}
\zeta(\cdot,0)=g_0 \quad\mbox{ and }\quad  \zeta(\cdot,\ell)=g_\ell,
\end{equation}
and initial condition 
\begin{equation}\label{ICeq}
(\zeta,q)(0,\cdot)=(\zeta^{\rm in},q^{\rm in})
\end{equation}
 is well posed.  Remarking that from the first equation of \eqref{eq:BP} one has $\frac{{\rm d}}{{\rm d} t}(\zeta(t,0))=-\partial_x q(t,0)$, and proceeding similarly at $x=\ell$,  a necessary compatibility condition on the initial and boundary data to allow the possibility of a solution which is of class $C^1$ at the origin is that
\begin{equation}\label{CC0}
\begin{cases}
\zeta^{\rm in}(0)=g_0(0),\\
-\partial_x q^{\rm in}(0)=\dot{g}_0(0),
\end{cases}
\quad \mbox{ and }\quad
\begin{cases}
 \zeta^{\rm in}(\ell)=g_\ell(0),\\
-\partial_x q^{\rm in}(\ell)=\dot{g}_\ell(0).
\end{cases}
\end{equation}
 
 The key step, in the spirit of \cite{Lannes_Weynans_2020,beck2023numerical} is to reformulate the problem as an initial value problem (no boundary condition).
\begin{proposition}\label{propreformflat}
Assume that the initial and boundary data $(\zeta^{\rm in},q^{\rm in})$ and $(g_0,g_\ell)$ satisfy the compatibility condition \eqref{CC0}.  Then the two following assertions are equivalent:\\
{\bf i.} The couple $(\zeta,q)$ is a regular solution to \eqref{eq:BA} such that the depth $h$ never vanishes and with boundary conditions \eqref{BCeq} and initial condition \eqref{ICeq}.\\
{\bf ii.} The couple $(\zeta,q)$ is a regular solution such that the depth $h$ never vanishes to 
\begin{equation}
\begin{cases}
\partial_t \zeta+\partial_x q=0,\\
\partial_t q +\partial_x (R_1f_{\rm NSW})=\dot{q}_0{\mathfrak s}^{(0)}+\dot{q}_\ell{\mathfrak s}^{(\ell)},
\end{cases}
\quad \text{in}\ (0,\ell),
\label{eq:BA-reformulated}
\end{equation}
with initial condition \eqref{ICeq}, and where $q_0$ and $q_\ell$ solve the ODE
\begin{equation}\label{eq:BA-boundarylayer}
{\mathfrak S}'
\begin{pmatrix} \dot{q}_0 \\ \dot{q}_\ell \end{pmatrix}+
  \frac{3}{h_0^2}
\begin{pmatrix}
 f_{\rm NSW} (g_0,q_0)\\
f_{\rm NSW} (g_\ell,q_\ell)
 \end{pmatrix}
 =
 -
 \begin{pmatrix}
 \ddot g_0 \\
 \ddot g_\ell
 \end{pmatrix}
 +
  \frac{3}{h_0^2}
\begin{pmatrix}
(R_1 f_{\rm NSW})_{\vert_{x=0}}     \\
(R_1 f_{\rm NSW})_{\vert_{x=\ell}}      
 \end{pmatrix},
\end{equation}
with ${\mathfrak S}'$ defined in \eqref{defSprime}, and with initial condition 
\begin{equation}\label{ICeq2}
q_0(0)=q^{\rm in}(0) \quad\mbox{ and }\quad q_\ell(0)=q^{\rm in}(\ell).
\end{equation}
 \end{proposition}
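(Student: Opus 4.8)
The plan is to prove the equivalence by treating the two implications separately, relying throughout on three structural facts about the operators of \S\ref{subsec:reformulation-BL}: the commutation identity \eqref{R0R1comm}; the fact that $R^1$ (defined in \eqref{defR1}) carries homogeneous Neumann data, so that $\partial_x(R^1 f)$ vanishes at $x=0$ and $x=\ell$; and the pointwise identity $\partial_x^2(R^1 f)=\frac{3}{h_0^2}(R^1 f-f)$, which is merely a rewriting of the defining equation of $R^1$. The functions ${\mathfrak s}^{(0)},{\mathfrak s}^{(\ell)}$ will be used as the kernel basis realizing prescribed boundary values.

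For the implication \textbf{i.$\Rightarrow$ii.}, assume $(\zeta,q)$ solves \eqref{eq:BA}--\eqref{BCeq}--\eqref{ICeq} and set $q_0(t):=q(t,0)$, $q_\ell(t):=q(t,\ell)$. Then $\partial_t q$ solves $(1-\frac{h_0^2}{3}\partial_x^2)\partial_t q=-\partial_x f_{\rm NSW}$ with boundary values $\dot q_0,\dot q_\ell$; I would invert this operator by writing $\partial_t q$ as the homogeneous-Dirichlet solution $-R^0\partial_x f_{\rm NSW}$ plus the boundary-matching combination $\dot q_0{\mathfrak s}^{(0)}+\dot q_\ell{\mathfrak s}^{(\ell)}$, which is legitimate because ${\mathfrak s}^{(0)},{\mathfrak s}^{(\ell)}$ span the kernel and realize the boundary data $1$ and $0$ (see \eqref{defs0l}). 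The commutation identity \eqref{R0R1comm} then replaces $R^0\partial_x f_{\rm NSW}$ by $\partial_x(R^1 f_{\rm NSW})$, giving the second line of \eqref{eq:BA-reformulated}. To produce the ODE \eqref{eq:BA-boundarylayer}, I differentiate this relation in $x$, substitute $\partial_x^2(R^1 f_{\rm NSW})=\frac{3}{h_0^2}(R^1 f_{\rm NSW}-f_{\rm NSW})$, and evaluate at the two endpoints; using that the boundary condition $\zeta(t,0)=g_0$ together with the first equation forces $\partial_x q(t,0)=-\dot g_0$, hence $\partial_t\partial_x q(t,0)=-\ddot g_0$, reproduces exactly the two scalar lines packaged in \eqref{eq:BA-boundarylayer} with ${\mathfrak S}'$ as in \eqref{defSprime}. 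The initial condition \eqref{ICeq2} is immediate from the definition of $q_0,q_\ell$.

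For the converse \textbf{ii.$\Rightarrow$i.}, applying $(1-\frac{h_0^2}{3}\partial_x^2)$ to the second line of \eqref{eq:BA-reformulated} annihilates the right-hand side, since ${\mathfrak s}^{(0)},{\mathfrak s}^{(\ell)}$ lie in the kernel, and \eqref{R0R1comm} restores $\partial_x f_{\rm NSW}$ on the left, returning the PDE \eqref{eq:BA}. It then remains to recover the boundary data. Evaluating the second line of \eqref{eq:BA-reformulated} at $x=0$ and using $\partial_x(R^1 f_{\rm NSW})_{\vert_{x=0}}=0$ together with ${\mathfrak s}^{(0)}(0)=1$, ${\mathfrak s}^{(\ell)}(0)=0$ yields $\partial_t q(t,0)=\dot q_0(t)$; combined with \eqref{ICeq2} this forces $q(t,0)=q_0(t)$, and likewise $q(t,\ell)=q_\ell(t)$. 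Finally, to obtain $\zeta(t,0)=g_0(t)$, set $w:=\zeta(\cdot,0)-g_0$; differentiating the reformulated equation in $x$ and evaluating at $x=0$ as above, then subtracting the first line of \eqref{eq:BA-boundarylayer}, cancels the $R^1 f_{\rm NSW}$ trace and leaves the closed second order ODE $-\ddot w=\frac{3}{h_0^2}\big(f_{\rm NSW}(g_0+w,q_0)-f_{\rm NSW}(g_0,q_0)\big)$. The compatibility conditions \eqref{CC0} give $w(0)=0$ and $\dot w(0)=0$, and since the right-hand side is locally Lipschitz in $w$ (because $h$ stays away from zero), Cauchy--Lipschitz uniqueness gives $w\equiv0$; the identical argument at $x=\ell$ finishes the proof.

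The easy direction is \textbf{i.$\Rightarrow$ii.}, which is an inversion plus differentiation computation. The delicate part is the boundary recovery in \textbf{ii.$\Rightarrow$i.}: nothing in \eqref{eq:BA-reformulated}--\eqref{eq:BA-boundarylayer} imposes $\zeta(\cdot,0)=g_0$ directly, so this identity must be extracted a posteriori. The mechanism is that the ODE \eqref{eq:BA-boundarylayer} is engineered precisely so that the trace of $\zeta$ at the boundary and the prescribed datum $g_0$ obey the same second order ODE; the role of \eqref{CC0} is exactly to align their Cauchy data, and the non-vanishing of $h$ is what secures uniqueness. I would pay particular attention to justifying the pointwise boundary manipulations (traces of $\partial_x^2 R^1 f_{\rm NSW}$ and time differentiability of boundary values), which is where the \emph{regular solution} hypothesis is genuinely used.
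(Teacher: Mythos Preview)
Your proposal is correct and follows essentially the same route as the paper. Both directions match the paper's argument: the forward implication via inversion with $R^0$, the commutation \eqref{R0R1comm}, differentiation in $x$, and trace evaluation; the reverse implication by applying $(1-\tfrac{h_0^2}{3}\partial_x^2)$, reading off $q(t,0)=q_0(t)$ from the Neumann data of $R^1$, and then recovering $\zeta(\cdot,0)=g_0$ through a second-order ODE argument. The only cosmetic difference is that the paper phrases the last step as ``$(\zeta_0,\zeta_\ell)$ and $(g_0,g_\ell)$ satisfy the same second-order ODE, hence coincide by \eqref{CC0}'', whereas you subtract to obtain a closed scalar ODE for $w=\zeta(\cdot,0)-g_0$ with vanishing Cauchy data; these are the same argument.
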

\begin{proof}
Let  $(\zeta,q)$ be a regular solution to  \eqref{eq:BA} with boundary conditions $\zeta(\cdot,0)=g_0$ and $\zeta(\cdot,\ell)=g_\ell$ and initial condition $(\zeta,q)(0,\cdot)=(\zeta^{\rm in},q^{\rm in})$. By definition of $R_0$, the second equation of \eqref{eq:BA} can be rewritten as
$$
\partial_t q + R_0\partial_x f_{\rm NSW}=\dot{q}_0{\mathfrak s}^{(0)}+\dot{q}_\ell{\mathfrak s}^{(\ell)},
$$
where $q_0$ and $q_\ell$ denote the trace of $q$ at $x=0$ and $x=\ell$ respectively.
Differentiating with respect to $x$ we get
$$
\partial_t \partial_x q + \partial_x R_0\partial_x f_{\rm NSW}=\dot{q}_0{\mathfrak s}'^{(0)}+\dot{q}_\ell{\mathfrak s}'^{(\ell)}.
$$
Using the first equation one can replace $\partial_t \partial_x q=-\partial^2_t\zeta$ while the identities $R_0\partial_x=\partial_x R_1$ and $  (1 - \frac{h_0^2}{3} \partial_x^2 ) R_1={\rm Id}$ allow us to deduce that
$$
-\partial_t^2\zeta  -\frac{3}{h_0^2}(1-R_1)   f_{\rm NSW}=\dot{q}_0{\mathfrak s}'_{(0)}+\dot{q}_\ell{\mathfrak s}'_{(\ell)}.
$$
Taking the trace of this equation at $x=0$ and $x=\ell$ respectively, we therefore obtain the following differential system on $q_0$ and $q_\ell$,
$$
\begin{cases}
{\mathfrak s}'_{(0)}(0) \dot{q}_0+{\mathfrak s}'_{(\ell)}(0) \dot{q}_\ell&=  -\frac{3}{h_0^2} f_{\rm NSW} (g_0,q_0)+ \frac{3}{h_0^2}(R_1 f_{\rm NSW})_{\vert_{x=0}}       - \ddot g_0 \\
{\mathfrak s}'_{(0)}(\ell) \dot{q}_0+{\mathfrak s}'_{(\ell)}(\ell) \dot{q}_\ell&= -\frac{3}{h_0^2} f_{\rm NSW} (g_\ell,q_\ell)+ \frac{3}{h_0^2}(R_1 f_{\rm NSW})_{\vert_{x=\ell}}      - \ddot g_\ell;
\end{cases}
$$
this shows that the first point of the proposition implies the second one. For the reverse implication, we can observe that if $(\zeta,q)$ and $(q_0,q_l)$ solve the above system then $(\zeta,q)$ solves the Boussinesq system \eqref{eq:BA}. By taking the trace of the second equation of \eqref{eq:BA-reformulated} at $x=0$ and $x=\ell$ respectively, we also obtain that $\frac{{\rm d}}{{\rm d}t} q(t,0)=\dot{q}_0(t)$ and $\frac{{\rm d}}{{\rm d}t} q(t,\ell)=\dot{q}_\ell(t)$. Since moreover we take by assumption $q(0,0)=q_0(0)$ and $q(0,\ell)=q_\ell(0)$, we deduce that $q(t,0)=q_0(t)$ and $q(t,\ell)=q_\ell(t)$ for all times. In contrast,  the
boundary conditions \eqref{BCeq} are not necessarily satisfied. However, as for the derivation of \eqref{eq:BA-boundarylayer}, we obtain that $(\zeta_0(t),\zeta_\ell(t)):=(\zeta(t,0),\zeta(t,\ell))$ satisfies
$$
{\mathfrak S}'
\begin{pmatrix} \dot{q}_0 \\ \dot{q}_\ell \end{pmatrix}+
  \frac{3}{h_0^2}
\begin{pmatrix}
 f_{\rm NSW} (\zeta_0,q_0)\\
f_{\rm NSW} (\zeta_\ell,q_\ell)
 \end{pmatrix}
 =
 -
 \begin{pmatrix}
 \ddot \zeta_0 \\
 \ddot \zeta_\ell
 \end{pmatrix}
 +
  \frac{3}{h_0^2}
\begin{pmatrix}
(R_1 f_{\rm NSW})_{\vert_{x=0}}     \\
(R_1 f_{\rm NSW})_{\vert_{x=\ell}}      
 \end{pmatrix},
$$
and it follows that $(\zeta_0,\zeta_\ell)$ and $(g_0,g_\ell)$ satisfy the same second order ODE. We can therefore conclude that $(\zeta_0,\zeta_\ell)=(g_0,g_\ell)$ if and only if
\[
(\zeta_0(0),\zeta_\ell(0))=(g_0(0),g_\ell(0))
\quad\mbox{ and }\quad
(\dot{\zeta}_0(0),\dot{\zeta}_\ell(0))=(\dot{g}_0(0),\dot{g}_\ell(0)).\]
Replacing $\partial_t\zeta$ by $-\partial_x q$, these conditions are equivalent to \eqref{CC0}. It follows that the boundary conditions \eqref{BCeq} are also satisfied. This concludes the proof of the proposition.
\end{proof}

The initial value problem formed by \eqref{eq:BA-reformulated} and \eqref{eq:BA-boundarylayer} with initial conditions \eqref{ICeq} and \eqref{ICeq2} is actually an ordinary differential equation on $H^{n}(0,\ell)\times H^{n+1}(0,\ell)\times {\mathbb R}^2$ for $(\zeta,q,q_0,q_\ell)$ if $n$ is a nonnegative integer. The existence of a local in time solution therefore follows immediately from Cauchy-Lipschitz theorem (we do not give details for the proof, which can easily be adapted from \cite{Lannes_Weynans_2020}). Note that the condition $\inf (h_0+\zeta^{\rm in})>0$ ensures that the shallow water flux $f_{\rm NSW}$ is well defined. By Proposition \ref{propreformflat}, this solution furnishes a solution to the original initial boundary value problem \eqref{eq:BA}, \eqref{BCeq} and  \eqref{ICeq}.
\begin{proposition}\label{propWPBA}
Let $g_0,g_\ell \in C^\infty({\mathbb R}^+)$. Let also $n\in {\mathbb N}\backslash\{0\}$ and $(\zeta^{\rm in},q^{\rm in})\in H^{n}(0,\ell)\times H^{n+1}(0,\ell)$ be such that $\inf (h_0+\zeta^{\rm in})>0$ and satisfying the compatibility conditions \eqref{CC0}.\\
Then there exists a maximal existence time $T^*>0$ and a unique solution $(\zeta,q,q_0,q_\ell)\in C^\infty([0,T^*); H^{n}(0,\ell)\times H^{n+1}(0,\ell)\times {\mathbb R}^2)$ to  \eqref{eq:BA-reformulated} and \eqref{eq:BA-boundarylayer} with initial conditions \eqref{ICeq} and \eqref{ICeq2}, and for all $t\in [0,T^*)$, one has $q_0(t)=q(t,0)$ and $q_\ell(t)=q(t,\ell)$.
\end{proposition}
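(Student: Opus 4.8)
The plan is to read the coupled system \eqref{eq:BA-reformulated}--\eqref{eq:BA-boundarylayer} as a (non-autonomous) ordinary differential equation $\dot U=F(t,U)$ for the unknown $U=(\zeta,q,q_0,q_\ell)$ living in the Banach space $X=H^{n}(0,\ell)\times H^{n+1}(0,\ell)\times\mathbb{R}^2$, and to solve it by the Cauchy--Lipschitz theorem on the open subset $\mathcal{O}=\{(\zeta,q,q_0,q_\ell)\in X:\ \inf_{(0,\ell)}(h_0+\zeta)>0\}$, which contains the initial datum by hypothesis and on which the shallow-water flux $f_{\rm NSW}$ is well defined. The time dependence of $F$ enters only through $g_0,g_\ell\in C^\infty(\mathbb{R}^+)$ and their derivatives. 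To make $F$ explicit I first need $(\dot q_0,\dot q_\ell)$, which \eqref{eq:BA-boundarylayer} provides only after inverting ${\mathfrak S}'$; using the closed forms \eqref{eq:s-sinh} one computes $\det{\mathfrak S}'=-3/h_0^2\neq0$, so \eqref{eq:BA-boundarylayer} yields $(\dot q_0,\dot q_\ell)=({\mathfrak S}')^{-1}[\cdots]$ as an explicit function of $(\zeta,q,q_0,q_\ell)$ and $t$. Substituting this into the second line of \eqref{eq:BA-reformulated} fixes the $q$-component of $F$, while its $\zeta$-component is simply $-\partial_x q$.

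The main step is to check that $F$ maps $\mathcal{O}$ into $X$ and is locally Lipschitz (in fact smooth). This rests on three structural facts: $R_1$ is bounded $H^{s}(0,\ell)\to H^{s+2}(0,\ell)$, since it inverts a second order elliptic operator with Neumann data; the nonlinear map $(\zeta,q)\mapsto f_{\rm NSW}(\zeta,q)=\frac{1}{2}{\mathtt g}(2h_0\zeta+\zeta^2)+q^2/h$ is smooth from $H^{n}\times H^{n+1}$ into $H^{n}$ on $\mathcal{O}$, because $H^{n}(0,\ell)$ is a Banach algebra for $n\ge1$ and $\zeta\mapsto 1/h$ is smooth where $\inf h>0$; and the trace maps $u\mapsto u(0),u(\ell)$ are bounded on $H^{n+2}\hookrightarrow C^1$. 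Chaining them gives $f_{\rm NSW}\in H^{n}\Rightarrow R_1 f_{\rm NSW}\in H^{n+2}\Rightarrow \partial_x(R_1 f_{\rm NSW})\in H^{n+1}$ and $(R_1 f_{\rm NSW})_{\vert_{x=0,\ell}}\in\mathbb{R}$, whereas $\dot q_0{\mathfrak s}^{(0)}+\dot q_\ell{\mathfrak s}^{(\ell)}\in C^\infty\subset H^{n+1}$; hence the $q$-component lands in $H^{n+1}$ and the scalar components in $\mathbb{R}^2$, so $F(t,\mathcal{O})\subset X$, and local Lipschitz continuity follows from the boundedness of these linear operators combined with the algebra/composition estimates for $f_{\rm NSW}$. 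I expect this to be the only delicate point: the derivative count balances precisely because the dispersive smoothing of $R_1$ compensates the $\partial_x$, and because $q$ is taken one degree smoother than $\zeta$; this is exactly where the argument of \cite{Lannes_Weynans_2020} is being adapted.

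Granting that $F$ is a smooth time-dependent vector field on $\mathcal{O}$, Cauchy--Lipschitz produces a unique maximal solution $U\in C^1([0,T^*);\mathcal{O})$ with initial value $(\zeta^{\rm in},q^{\rm in},q^{\rm in}(0),q^{\rm in}(\ell))$ dictated by \eqref{ICeq} and \eqref{ICeq2}; the maximality of $T^*$ is the usual blow-up alternative, the solution leaving every compact subset of $\mathcal{O}$ as $t\to T^*$ (through blow-up of the $X$-norm or through $\inf h\to0$). A standard bootstrap upgrades the time regularity: since $F$ is $C^\infty$ in both arguments and $U\in C^1$, the relation $\dot U=F(t,U)$ forces $\dot U\in C^1$, hence $U\in C^2$, and iterating gives $U\in C^\infty([0,T^*);X)$. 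Note that the compatibility conditions \eqref{CC0} are not needed for this ODE well-posedness; they are inherited from the statement only so that, through Proposition \ref{propreformflat}, the solution also solves the original initial boundary value problem.

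It remains to identify $q_0,q_\ell$ with the boundary traces of $q$, exactly as in the second half of the proof of Proposition \ref{propreformflat}. Taking the trace at $x=0$ of the second line of \eqref{eq:BA-reformulated}, the Neumann condition $\partial_x(R_1 f_{\rm NSW})(0)=0$ together with ${\mathfrak s}^{(0)}(0)=1$ and ${\mathfrak s}^{(\ell)}(0)=0$ yields $\frac{{\rm d}}{{\rm d}t}q(t,0)=\dot q_0(t)$; since $q(0,0)=q^{\rm in}(0)=q_0(0)$ by \eqref{ICeq2}, integration gives $q(t,0)=q_0(t)$ on $[0,T^*)$, and symmetrically $q(t,\ell)=q_\ell(t)$ using ${\mathfrak s}^{(0)}(\ell)=0$ and ${\mathfrak s}^{(\ell)}(\ell)=1$. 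This closes the proof.
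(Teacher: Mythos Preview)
Your proof is correct and follows exactly the approach sketched in the paper: recast the coupled system as an ODE on $H^{n}\times H^{n+1}\times\mathbb{R}^2$, invoke Cauchy--Lipschitz on the open set where $\inf h>0$, and identify $q_0,q_\ell$ with the traces of $q$ via the boundary values of ${\mathfrak s}^{(0)},{\mathfrak s}^{(\ell)}$ and the Neumann condition on $R_1$. The paper itself gives no details (it defers to \cite{Lannes_Weynans_2020} and to Proposition~\ref{propreformflat} for the trace identification), so your write-up is in fact more complete than what appears there; in particular your verification that $\det{\mathfrak S}'=-3/h_0^2\neq0$ and your regularity count $f_{\rm NSW}\in H^n\Rightarrow\partial_x R_1 f_{\rm NSW}\in H^{n+1}$ are exactly the points the paper leaves implicit.
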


\subsection{The Boussinesq-Abbott model with topography}
\label{subsec:boussinesq-peregrine}

We now consider the full Boussinesq-Abbott model which contains additional topography terms compared to \eqref{eq:BA}. Throughout this section, we assume that the bottom is parameterized by a function $-h_0+b$, with $b$ a smooth function on $[0,\ell]$. We assume also that the depth of the fluid at rest never vanishes, namely,
\begin{equation}\label{condhb}
\inf_{(0,\ell)} (h_0-b) >0.
\end{equation}
Denoting $h_b=h_0-b$, the Boussinesq-Abbott model is given by
\begin{align}\label{eq:BP}
  \left\{
  \begin{array}{l}
    \partial_t \zeta + \partial_x q = 0 \\
    (1  + h_b{\mathcal T}_b)\partial_t q + \partial_x f_{\text{NSW}} = -{\mathtt g}h\partial_x b
  \end{array}\right.\quad \text{in}\ (0,\ell)\ ,
\end{align}
where $h$ now contains an additional topography term, $h=h_0+\zeta- b$, and where the shallow water flux is still given by 
$$
 f_{\text{NSW}}=f_{\rm NSW}(\zeta,q)=\frac{1}{2} {\mathtt g} (h^2-h_0^2) +\frac{1}{h}q^2,
 $$
  which is the same expression as in the case of a flat topography, except that $h$ now depends also on $b$.
The second order operator  $\mathcal T_b$ is given by
\begin{align}\label{eq:operator-Tb}
{  \mathcal T}_b(\cdot) = -\frac{1}{3 h_b}\partial_x\Big(h_b^3\partial_x\frac{(\cdot)}{h_b}\Big)
 + \frac{1}{2}\partial_x^2 b
\end{align}
and  can be alternatively written under the form (see Section 5.6 of \cite{Lannes_book})
$$
{\mathcal T}_b(\cdot)=S^*\big(h_b S(\cdot)\big)+\frac{1}{4}\frac{(\partial_x b)^2}{h_b}
$$
with
\begin{equation}\label{defS}
S(\cdot)=-\frac{1}{\sqrt{3}}h_b\partial_x(\frac{1}{h_b}\cdot)+\frac{\sqrt{3}}{2}\frac{\partial_x b}{h_b}.
\end{equation}

\subsubsection{Notations and preliminary results}

We generalize here, in the presence of a non-flat topography, the concepts introduced in  Section~\ref{subsec:reformulation-BL}. We introduce the operator $R_{b}^0$ as the inverse of $(1 + h_b{\mathcal T}_b)$ with homogeneous Dirichlet boundary conditions, that is to say
\begin{align}\label{defR0b}
  R^0_b:f\in L^2(0,\ell)\longmapsto u\in H^2(0,\ell)\quad \text{solving}\quad
  \left\{
  \begin{array}{l}
    \big[1 + h_b {\mathcal T}_b\big] u = f \\ u(0) = u(\ell) = 0
  \end{array}\right. ;
\end{align}
here again, the operator $R^0_b$ can be also extended as an operator mapping $H^{-1}(0,\ell)$ to $H^1(0,\ell)$.
We also introduce the solutions  ${\mathfrak s}^{(b,0)}$ and  ${\mathfrak s}^{(b,\ell)}$ of the homogeneous equation with non homogeneous boundary conditions,
\begin{equation}\label{defs0lb}
  \left\{
  \begin{array}{l}
    \big[1 + h_b {\mathcal T}_b\big] {\mathfrak s}^{(b,0)} = 0 \\ {\mathfrak s}^{(b,0)}(0) = 1, \qquad {\mathfrak s}^{(b,0)}(\ell) = 0
  \end{array}\right.
\quad\mbox{ and }\quad
 \left\{
  \begin{array}{l}
    \big[1 + h_b {\mathcal T}_b\big]  {\mathfrak s}^{(b,\ell)} = 0 \\ {\mathfrak s}^{(b,\ell)}(0) = 0, \qquad {\mathfrak s}^{(b,\ell)}(\ell) = 1
  \end{array}\right. 
\end{equation}
and we also denote by ${\mathfrak S}_{b}'$ the matrix
\begin{equation}\label{defSbprime}
{\mathfrak S}_b'=\begin{pmatrix}
({\mathfrak s}^{(b,0)})'(0)  & ({\mathfrak s}^{(b,\ell)})'(0) \\
({\mathfrak s}^{(b,0)})'(\ell)  & ({\mathfrak s}^{(b,\ell)})'(\ell) 
\end{pmatrix}.
\end{equation}
Note that contrary to the case of a flat topography, there is in general no explicit expression for ${\mathfrak s}_{(b,0)}$ and  ${\mathfrak s}_{(b,\ell)}$. We can however prove that ${\mathfrak S}_b'$ is invertible under an assumption which is satisfied by all the topography profiles we considered in this paper, as well as by typical beach profiles.
\begin{proposition}\label{propS'}
Assume that \eqref{condhb} is satisfied and that in addition
\begin{equation}\label{condhb2}
\min_{(0,\ell)}(1+\frac{1}{4} (\partial_x b)^2+\frac{1}{6}h_b \partial_x^2 b)>0.
\end{equation}
Then  the matrix  ${\mathfrak S}_b'$ is invertible and $({\mathfrak s}^{(b,0)})'(0)\neq 0$ and $({\mathfrak s}^{(b,\ell)})'(\ell)\neq 0$.
\end{proposition}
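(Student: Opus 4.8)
The plan is to reduce the invertibility of $\mathfrak{S}_b'$ to a uniqueness statement for the homogeneous problem, and then to establish that uniqueness through an energy identity obtained by testing the equation against $u/h_b$. First I would argue by contradiction: if $\mathfrak{S}_b'$ were singular, there would exist $(\alpha,\beta)\neq(0,0)$ with $\mathfrak{S}_b'\,(\alpha,\beta)^{\mathsf T}=0$. Setting $u=\alpha\,{\mathfrak s}^{(b,0)}+\beta\,{\mathfrak s}^{(b,\ell)}$, the function $u$ solves $[1+h_b{\mathcal T}_b]u=0$ since both ${\mathfrak s}^{(b,0)}$ and ${\mathfrak s}^{(b,\ell)}$ solve the homogeneous equation of \eqref{defs0lb}; moreover $u(0)=\alpha$, $u(\ell)=\beta$ by the prescribed Dirichlet values, while the two components of $\mathfrak{S}_b'\,(\alpha,\beta)^{\mathsf T}=0$ (recall \eqref{defSbprime}) say precisely that $u'(0)=u'(\ell)=0$. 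Thus it suffices to show that a solution of the homogeneous equation with homogeneous Neumann data vanishes identically: this forces $\alpha=u(0)=0$ and $\beta=u(\ell)=0$, the desired contradiction.

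The core of the argument is an energy identity. I would multiply $[1+h_b{\mathcal T}_b]u=0$ by $u/h_b$, integrate over $(0,\ell)$ and integrate by parts, exploiting the divergence form of ${\mathcal T}_b$ in \eqref{eq:operator-Tb} (equivalently the factorization through $S$ in \eqref{defS}). The term coming from $-\tfrac{1}{3h_b}\partial_x\big(h_b^3\partial_x(u/h_b)\big)$ produces $\tfrac13\int h_b^3\big(\partial_x(u/h_b)\big)^2$; writing $\partial_x(u/h_b)=u'/h_b+u\,\partial_x b/h_b^2$ and integrating the resulting cross term $\tfrac23\int u\,u'\,\partial_x b$ by parts converts part of the gradient energy into a curvature term that combines with the $\tfrac12\int(\partial_x^2 b)u^2$ arising from the zeroth-order part of ${\mathcal T}_b$. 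After collecting everything I expect to reach
\[
0=\frac13\int_0^\ell h_b(u')^2+\int_0^\ell\frac{u^2}{h_b}\Big(1+\frac13(\partial_x b)^2+\frac16 h_b\partial_x^2 b\Big)-\frac13\big[h_b\,u\,u'\big]_0^\ell,
\]
where the remaining boundary contributions telescope down to the single term $-\tfrac13[h_b\,u\,u']_0^\ell$.

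I would then conclude as follows. For homogeneous Neumann data the boundary term vanishes since $u'(0)=u'(\ell)=0$, and under \eqref{condhb2} the integrand satisfies $1+\tfrac13(\partial_x b)^2+\tfrac16 h_b\partial_x^2 b\ge 1+\tfrac14(\partial_x b)^2+\tfrac16 h_b\partial_x^2 b>0$, while \eqref{condhb} ensures $h_b>0$; hence the right-hand side is a sum of nonnegative terms and must vanish term by term, so $u\equiv 0$ and $\mathfrak{S}_b'$ is invertible. For the nondegeneracy of the corner entries I would apply the same identity to $u={\mathfrak s}^{(b,0)}$: if $({\mathfrak s}^{(b,0)})'(0)=0$ then $u'(0)=0$ and $u(\ell)=0$, which again annihilates $[h_b\,u\,u']_0^\ell$, forcing $u\equiv 0$ and contradicting $u(0)=1$; the symmetric choice $u={\mathfrak s}^{(b,\ell)}$ yields $({\mathfrak s}^{(b,\ell)})'(\ell)\neq 0$. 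The main obstacle is the bookkeeping in the energy identity: the curvature term $\partial_x^2 b$ is sign-indefinite and must be controlled by borrowing from the gradient energy via the integration by parts of the cross term, and one must verify that the various boundary contributions indeed cancel down to $-\tfrac13[h_b\,u\,u']_0^\ell$, so that the chosen boundary conditions (Neumann, or mixed Neumann--Dirichlet) make it disappear.
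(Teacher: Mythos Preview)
Your proof is correct and follows the same overall strategy as the paper: reduce the invertibility of $\mathfrak{S}_b'$ (and the nonvanishing of the corner derivatives) to a uniqueness statement for the homogeneous equation under homogeneous Neumann or mixed boundary conditions, and establish that uniqueness via an energy identity. The difference is in the choice of test function. The paper multiplies $(1+h_b{\mathcal T}_b)u=0$ by $u$, which leaves a sign-indefinite cross term $-\tfrac13\int h_b(\partial_x b)\,u\,\partial_x u$ that is then absorbed through Young's inequality with a parameter $\epsilon$ close to $2$; the strict positivity in \eqref{condhb2} is what allows an admissible $\epsilon\in(\tfrac12,2)$ and hence $H^1$-coercivity. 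You instead multiply by $u/h_b$, which exploits the divergence form of ${\mathcal T}_b$ more directly: the cross term can be integrated by parts exactly, the boundary contributions indeed telescope to $-\tfrac13[h_b\,u\,u']_0^\ell$ as you claim, and one lands on the clean identity with the factor $1+\tfrac13(\partial_x b)^2+\tfrac16 h_b\partial_x^2 b\ge 1+\tfrac14(\partial_x b)^2+\tfrac16 h_b\partial_x^2 b>0$, so no $\epsilon$-optimization is needed. Your variant is slightly more elementary; the paper's version, in exchange, produces a quantitative $H^1$-coercivity bound for $(1+h_b{\mathcal T}_b)$ that could be reused elsewhere.
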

\begin{proof}
We can remark that
$$
1+h_b{\mathcal T}_b=-\frac{1}{3}\partial_x (h_b^2 \partial_x \cdot )-\frac{1}{3} h_b (\partial_x b) \partial_x + (1+\frac{1}{3} (\partial_x b)^2+\frac{1}{6}h_b \partial_x^2 b).
$$
For all $u\in H^1(0,\ell)$ such that $\partial_x u(0)=\partial_x u(\ell)=0$, we obtain therefore that 
$$
\int_0^\ell u (1+h_b{\mathcal T}_b) u=\frac{1}{3} \int_0^\ell (h_b^2 (\partial_x u)^2 - h_b(\partial_x b) u \partial_x u)
+\int_0^\ell (1+\frac{1}{3} (\partial_x b)^2+\frac{1}{6}h_b \partial_x^2 b)u^2.
$$
Since for all $\epsilon >0$, one has
$$
-h_b (\partial_x b) u \partial_x u\geq -\frac{\epsilon}{2} h_b^2 (\partial_x u)^2 -\frac{1}{2\epsilon}(\partial_x b)^2 u^2,
$$
we deduce that
$$
\int_0^\ell u (1+h_b{\mathcal T}_b) u  \geq  \frac{1}{3}(1-\frac{\epsilon}{2})  \int_0^\ell h_b^2 (\partial_x u)^2
+\int_0^\ell (1+\frac{1}{3}(1-\frac{1}{2\epsilon}) (\partial_x b)^2+\frac{1}{6}h_b \partial_x^2 b)u^2.
$$
Under the assumptions of the proposition, it is possible to find $\frac{1}{2}<\epsilon<2$ and a constant $C_\epsilon>0$ such that
$$
\int_0^\ell u (1+h_b{\mathcal T}_b) u  \geq C_\epsilon \vert u \vert_{H^1(0,\ell)}^2;
$$
in particular, if $u$ solves $(1+h_b{\mathcal T}_b) u = 0$ and satisfies  $\partial_x u(0)=\partial_x u(\ell)=0$, then one has $u\equiv 0$.\\
Now, if the matrix ${\mathfrak S}_b'$ were not invertible, there would exist $(\lambda,\mu)\neq (0,0)$ such that the function ${\mathfrak s}:= \lambda ({\mathfrak s}^{(b,0)})+\mu ({\mathfrak s}^{(b,\ell)})$ would satisfy ${\mathfrak s}'(0)={\mathfrak s}'(\ell)=0$. From the above considerations, one would have ${\mathfrak s}\equiv 0$, which is not possible since ${\mathfrak s}(0)=\lambda$ and  ${\mathfrak s}(\ell)=\mu$ and $(\lambda,\mu)\neq (0,0)$.\\
In order to prove that $({\mathfrak s}^{(b,0)})'(0)\neq 0$, we can modify the arguments above to get that if $u\in H^1(0,\ell)$ is such that $u'(0)=0$ and $u(\ell)=0$ then $u\equiv 0$. If we had $({\mathfrak s}^{(b,0)})'(0)=0$ then $({\mathfrak s}^{(b,0)})$ would be such a function and would identically vanish, which is absurd. We get similarly that $({\mathfrak s}^{(b,\ell)})'(\ell)\neq 0$, which concludes the proof.
\end{proof}

Another important difference with the case of flat topography is that the commutation property \eqref{R0R1comm} is no longer true, that is, in general we have  $R^0_b\partial_x \neq R^1_b\partial_x$  if we define $R^1_b$ as the inverse of  $(1 + h_b{\mathcal T}_b)$ with homogeneous Neumann boundary conditions. We can however define $R^1_b$ as follows
\begin{align*}
  R^1_b:f\in L^2(0,\ell)\longmapsto u\in H^2(0,\ell)
\end{align*}
 where $u$ solves
 \begin{align}\label{defR1b}
  \left\{
  \begin{array}{l}
    \big[1+ \frac{1}{h_b}(h_b S)\big( \frac{h_b}{\alpha_b }(h_b S)^* \big) \big] u = f ,\\ (h_b S)^*u(0) = (h_b S)^*u(\ell) = 0,
  \end{array}\right.
\end{align}
and with $\alpha_b=1+\frac{1}{4}(\partial_x b)^2$. The following proposition shows that there is a commutation property that generalizes the identity $R^0\partial_x=\partial_x R^1$ for the Boussinesq-Abbott model with topography. 
\begin{proposition}\label{propcommutgen}
For all $f\in H^1(0,\ell)$, the following identity holds
$$
R^0_b\partial_x f=  \sqrt{3}\frac{h_b}{\alpha_b }(h_b S)^* (R^1_b (\frac{1}{h_b^2}f))
-\frac{3}{2}R^0_b \big(\frac{\partial_x b}{h_b} f\big),
$$
where $\alpha_b=1+\frac{1}{4}(\partial_x b)^2$, and where we recall that $S(\cdot)$ is defined in \eqref{defS}.
\end{proposition}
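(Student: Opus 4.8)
The plan is to show that the claimed right-hand side solves exactly the same elliptic Dirichlet problem as $R^0_b\partial_x f$, and to conclude by uniqueness. For brevity write $P:=h_bS$, so that $P^*=(h_bS)^*$; a direct integration by parts gives the pointwise expressions $Pu=-\frac{1}{\sqrt3}h_b^2\partial_x(u/h_b)+\frac{\sqrt3}{2}(\partial_x b)u$ and $P^*u=\frac{1}{\sqrt3}\frac{1}{h_b}\partial_x(h_b^2 u)+\frac{\sqrt3}{2}(\partial_x b)u$. Using $P=h_bS$ and the self-adjointness of multiplication by $h_b$ one has $S^*(\cdot)=P^*(\cdot/h_b)$, so the two operators entering the statement can be recast in the parallel form
$$
1+h_b\mathcal T_b=\alpha_b+h_b\,P^*\tfrac1{h_b}P,\qquad
1+\tfrac1{h_b}P\tfrac{h_b}{\alpha_b}P^*,
$$
the first being inverted by $R^0_b$ (with data $u(0)=u(\ell)=0$) and the second by $R^1_b$ (with data $P^*u(0)=P^*u(\ell)=0$). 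Recognizing this mirror structure, in which $R^0_b$ and $R^1_b$ are built from the operator and its formal adjoint with the weights $h_b$, $\frac1{h_b}$, $\alpha_b$ interchanged, is the conceptual heart of the proof.

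Next I would set $v:=R^1_b(\frac{1}{h_b^2}f)$ and split the candidate right-hand side as $w=w_1+w_2$, with $w_1:=\sqrt3\frac{h_b}{\alpha_b}P^*v$ and $w_2:=-\frac32 R^0_b(\frac{\partial_x b}{h_b}f)$. The boundary conditions are then immediate: $w_2$ vanishes at $x=0,\ell$ by the Dirichlet condition built into $R^0_b$, while $w_1$ vanishes there because $v=R^1_b(\tfrac1{h_b^2}f)$ satisfies $P^*v(0)=P^*v(\ell)=0$ by the very definition of $R^1_b$. Hence $w(0)=w(\ell)=0$.

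It then remains to compute $(1+h_b\mathcal T_b)w$. For $w_2$ this is simply $-\frac32\frac{\partial_x b}{h_b}f$. For $w_1$ the trick is to avoid differentiating $P^*v$ a second time and instead use the defining equation of $R^1_b$: from $v+\frac1{h_b}P\frac{h_b}{\alpha_b}P^*v=\frac1{h_b^2}f$ one reads off $\frac1{h_b}Pw_1=\sqrt3\big(\frac1{h_b^2}f-v\big)$, whence
$$
(1+h_b\mathcal T_b)w_1=\alpha_b w_1+h_bP^*\tfrac1{h_b}Pw_1
=\sqrt3\,h_bP^*\big(\tfrac1{h_b^2}f\big),
$$
the term $\alpha_b w_1=\sqrt3\,h_bP^*v$ cancelling against $-\sqrt3\,h_bP^*v$. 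Substituting the explicit form of $P^*$ gives $\sqrt3\,h_bP^*(\tfrac1{h_b^2}f)=\partial_x f+\frac32\frac{\partial_x b}{h_b}f$, so that $(1+h_b\mathcal T_b)w=\partial_x f$. Since the Dirichlet problem for $1+h_b\mathcal T_b$ is uniquely solvable (this is precisely what defines $R^0_b$), I conclude $w=R^0_b\partial_x f$, which is the asserted identity.

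The one genuinely delicate point is the algebraic recasting of the first paragraph: identifying the operator inverted by $R^1_b$ as the ``adjoint companion'' of $1+h_b\mathcal T_b$ through the factor $P=h_bS$, and keeping track of the weights so that the two parallel forms match. Once this structure and the relation $S^*=P^*\frac1{h_b}$ are established, the remainder is the bookkeeping above; a brief regularity remark ($f\in H^1$ gives $v\in H^2$ and thus $\frac1{h_b}Pw_1\in H^1$, so each operator is applied to a function in its natural domain) suffices to justify that all the manipulations are licit.
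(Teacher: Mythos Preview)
Your proof is correct and follows essentially the same route as the paper: both arguments rest on the factorization $1+h_b\mathcal T_b=\alpha_b+h_bS^*(h_bS\cdot)$, use the defining equation and boundary condition of $R^1_b$ to show that $\sqrt{3}\frac{h_b}{\alpha_b}(h_bS)^*R^1_b(\tfrac{1}{h_b^2}f)$ satisfies the Dirichlet problem for $1+h_b\mathcal T_b$ with source $\sqrt{3}h_b(h_bS)^*(\tfrac{1}{h_b^2}f)=\partial_x f+\tfrac{3}{2}\tfrac{\partial_x b}{h_b}f$, and absorb the extra term via $R^0_b$. The only organizational difference is that the paper first decomposes the source as $\tfrac{1}{h_b}\partial_x f=S^*(\tfrac{\sqrt3}{h_b}f)-\tfrac{3}{2}\tfrac{\partial_x b}{h_b^2}f$ and isolates the key step as a separate lemma ($R^0_b(h_bS^*\tilde f)=\tfrac{h_b}{\alpha_b}(h_bS)^*R^1_b(\tfrac{\tilde f}{h_b})$), whereas you verify the full identity in one pass by checking that the candidate $w=w_1+w_2$ solves the Dirichlet problem directly.
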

\begin{remark}\label{rem:BP-generalization}
When $b\equiv 0$ then $h_b=h_0$, $R^0_b=R^0$, $R^1_b=R^1$ and $S^*=\frac{1}{\sqrt{3}}\partial_x$, so that the identity of the lemma coincides with the identity $R^0\partial_x=\partial_x R^1$ obtained previously.
\end{remark}
\begin{proof}
Let us first remark that one can write
\begin{equation}\label{R0equiv}
1 + h_b {\mathcal T}_b=\alpha_b+ h_b S^*\big(h_b S(\cdot)\big);
\end{equation}
it follows that the equation $ (1 + h_b {\mathcal T}_b) u = \partial_x f$ can be equivalently written under the form
\begin{align}
\nonumber
\big[\frac{\alpha_b}{h_b}+S^*\big(h_b S(\cdot)\big) \big] u&=\frac{1}{h_b}\partial_x f \\
\label{decompR0}
&=S^* (\frac{\sqrt{3}}{h_b}f)-\frac{3}{2}\frac{1}{h_b^2}(\partial_x b) f.
\end{align}
We now need the following lemma.
\begin{lemma}
For all $\widetilde{f}\in L^2(0,\ell)$, the following identity holds,
$$
R^0_b \big(h_b S^* \widetilde{f}\big)=  \frac{h_b}{\alpha_b }(h_b S)^* (R^1_b (\frac{\widetilde{f}}{h_b})).
$$
\end{lemma}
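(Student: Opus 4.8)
The plan is to prove the identity by showing that the right-hand side
$w := \frac{h_b}{\alpha_b}(h_b S)^*\big(R^1_b(\frac{\widetilde f}{h_b})\big)$ solves the very homogeneous-Dirichlet problem that \emph{defines} $R^0_b(h_b S^*\widetilde f)$, and then invoking the uniqueness of that problem. Concretely, I would set $v := R^1_b(\frac{\widetilde f}{h_b})$ and $w := \frac{h_b}{\alpha_b}(h_b S)^* v$, and record the two facts used repeatedly: the factorization $1 + h_b\mathcal{T}_b = \alpha_b + h_b S^*\big(h_b S(\cdot)\big)$ from \eqref{R0equiv}, and the identity $(h_b S)^* = S^*\circ(h_b\, \cdot\,)$, which simply expresses that the adjoint of ``apply $S$ then multiply by $h_b$'' is ``multiply by $h_b$ then apply $S^*$''.

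Next I would unwind the defining relation of $v$. Expanding the operator in \eqref{defR1b}, one has $\frac{1}{h_b}(h_b S) = S$, so $R^1_b$ inverts $1 + S\frac{h_b}{\alpha_b}(h_b S)^*$, and therefore $v + S\big(\frac{h_b}{\alpha_b}(h_b S)^* v\big) = \frac{\widetilde f}{h_b}$, that is $v + S w = \frac{\widetilde f}{h_b}$, whence $h_b S w = \widetilde f - h_b v$. This is the only place where the hypothesis on $v$ is used.

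Then comes the main computation, verifying the interior equation. By the factorization, $(1+h_b\mathcal{T}_b)w = \alpha_b w + h_b S^*(h_b S w)$; substituting $h_b S w = \widetilde f - h_b v$ gives $h_b S^*(h_b S w) = h_b S^*\widetilde f - h_b S^*(h_b v)$. Since $h_b S^*(h_b v) = h_b (h_b S)^* v = \alpha_b w$ by the very definition of $w$, the two $\alpha_b w$ terms cancel and one is left with $(1+h_b\mathcal{T}_b)w = h_b S^*\widetilde f$. For the boundary conditions, the Neumann-type conditions $(h_b S)^* v(0) = (h_b S)^* v(\ell) = 0$ built into \eqref{defR1b}, together with the boundedness and positivity of $h_b/\alpha_b$, force $w(0) = w(\ell) = 0$. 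Hence $w$ solves the homogeneous-Dirichlet problem $(1+h_b\mathcal{T}_b)w = h_b S^*\widetilde f$ which defines $R^0_b(h_b S^*\widetilde f)$, and uniqueness gives the claim.

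The computation itself is short; the steps requiring the most care are the bookkeeping of adjoints versus multiplications (keeping track of which $h_b$ factors sit inside $S^*$ and which outside) and the functional-analytic setting. Since $\widetilde f$ is only in $L^2(0,\ell)$, the source $h_b S^*\widetilde f$ lives in $H^{-1}(0,\ell)$, so the identity must be read through the extension of $R^0_b$ to $H^{-1}\to H^1$ and the interior equation interpreted weakly; the boundary terms produced by the integrations by parts in that weak formulation are precisely controlled by the conditions $(h_b S)^* v_{\vert\{0,\ell\}}=0$, which is exactly why the Neumann-type conditions in the definition of $R^1_b$ are the right ones. As a sanity check, when $b\equiv 0$ one has $\alpha_b=1$ and $S^* = \frac{1}{\sqrt 3}\partial_x$, and the identity collapses to the flat-bottom commutation $R^0\partial_x = \partial_x R^1$.
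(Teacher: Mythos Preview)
Your proof is correct and follows essentially the same route as the paper's. Both arguments set $v=R^1_b(\widetilde f/h_b)$, use the factorization $1+h_b\mathcal T_b=\alpha_b+h_bS^*(h_bS)$ together with the adjoint identity $(h_bS)^*=S^*\circ(h_b\,\cdot\,)$ to show that $w=\frac{h_b}{\alpha_b}(h_bS)^*v$ satisfies $(1+h_b\mathcal T_b)w=h_bS^*\widetilde f$, and then read off the homogeneous Dirichlet conditions on $w$ from the Neumann-type conditions built into $R^1_b$; the only cosmetic difference is that the paper applies $(h_bS)^*$ to the defining equation for $v$ and regroups, whereas you compute $(1+h_b\mathcal T_b)w$ directly and substitute $h_bSw=\widetilde f-h_bv$.
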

\begin{proof}[Proof of the lemma]
Let us write $v=R^1_b(\frac{\widetilde{f}}{h_b})$; by definition, one has
$$
  \left\{
  \begin{array}{l}
    \big[1+ \frac{1}{h_b}(h_b S)\big( \frac{h_b}{\alpha_b }(h_b S)^*(\cdot) \big) \big] v = \frac{1}{h_b}\widetilde{f} \\ (h_b S)^*v(0) = (h_b S)^*v(\ell) = 0;
  \end{array}\right.
  $$
  Applying $(h_bS)^*$ to the equation, one finds that
  $$
   \big[ (h_b S)^*+ S^*(h_b S)\big( \frac{h_b}{\alpha_b }(h_b S)^*(\cdot) \big) \big] v =S^* \widetilde{f} 
  $$
  or equivalently
  $$
     \big[ \alpha_b + h_bS^*(h_b S) \big] \big( \frac{h_b}{\alpha_b }(h_b S)^*(v)\big) =h_bS^* \widetilde{f} ;
  $$
  using \eqref{R0equiv}, we deduce that
  $$
    \big[ 1+h_b{\mathcal T}_b \big] \big( \frac{h_b}{\alpha_b }(h_b S)^*(v)\big) =h_b S^* \widetilde{f} 
  $$
  Since moreover $\big( \frac{h_b}{\alpha_b }(h_b S)^*(v)\big) $ vanishes at $x=0,\ell$, we use the definition of $R_b^0$ to conclude that
  $$
   \frac{h_b}{\alpha_b }(h_b S)^*(v) =R^0_b \big(h_b S^* \widetilde{f}\big),
  $$
  which proves the lemma. 
\end{proof}
Owing to \eqref{R0equiv}, one can write \eqref{decompR0} under the form
$$
R^0_b\partial_x f= R^0_b\big( h_b S^* (\frac{\sqrt{3}}{h_b}f) \big)-\frac{3}{2}R^0_b \big(\frac{\partial_x b}{h_b} f\big),
$$
so that the result follows by applying the lemma with $\widetilde{f}=\frac{\sqrt{3}}{h_b}f$.
\end{proof}

\subsubsection{Well-posedness of the initial boundary value problem}\label{subsect:BPWB}

We prove here that the initial boundary value problem formed by \eqref{eq:BP} with boundary conditions 
\begin{equation}\label{BCeqb}
\zeta(\cdot,0)=g_0 \quad\mbox{ and }\quad  \zeta(\cdot,\ell)=g_\ell,
\end{equation}
and initial condition 
\begin{equation}\label{ICeqb}
(\zeta,q)(0,\cdot)=(\zeta^{\rm in},q^{\rm in})
\end{equation}
is well posed. As seen previously, a necessary compatibility condition on the initial and boundary data to allow the possibility of a solution which is of class $C^1$ at the origin is that
\begin{equation}\label{CC}
\begin{cases}
\zeta^{\rm in}(0)=g_0(0),\\
-\partial_x q^{\rm in}(0)=\dot{g}_0(0),
\end{cases}
\quad \mbox{ and }\quad
\begin{cases}
 \zeta^{\rm in}(\ell)=g_\ell(0),\\
-\partial_x q^{\rm in}(\ell)=\dot{g}_\ell(0).
\end{cases}
\end{equation}
We first state the following generalization of Proposition \ref{propreformflat} to the Boussinesq-Abbott model, in which we use the notation
\begin{equation}\label{defgb}
B(\zeta,q)=-{\mathtt g} h \partial_x b+\frac{3}{2}\frac{\partial_x b}{h_b}f_{\rm NSW}.
\end{equation}
We also introduce the two dimensional vectors
\begin{equation}\label{defVbdry}
V_{\rm bdry}(\zeta_0,q_0,\zeta_\ell,q_\ell)=\begin{pmatrix}
 \frac{3}{h_b(0)^2}f_{\rm NSW} (\zeta_0,q_0) \\
\frac{3}{h_b(\ell)^2}f_{\rm NSW} (\zeta_\ell,q_\ell) 
 \end{pmatrix}
 \end{equation}
 which depends only on the boundary values of $(\zeta,q)$, and
 \begin{equation}\label{defVint}
 V_{\rm int}[\zeta,q]=
\begin{pmatrix}
3(R_b^1 (\frac{1}{h_b^2}f_{\rm NSW}) )_{\vert_{x=0}}    - \big(\partial_x (R^0_b B) \big)_{\vert_{x=0}}  \\
3(R_b^1 (\frac{1}{h_b^2}f_{\rm NSW}) )_{\vert_{x=\ell}}   -\big( \partial_x (R^0_b B)\big)_{\vert_{x=\ell}}
 \end{pmatrix},
\end{equation}
which is a nonlocal function of the interior values of $\zeta$ and $q$ on the whole interval $(0,\ell)$.
\begin{proposition}\label{propreformgen}
Let the bottom parametrization $b$ satisfy \eqref{condhb} and \eqref{condhb2}. Assume that the initial and boundary data $(\zeta^{\rm in},q^{\rm in})$  and $(g_0, g_\ell)$ satisfy the compatibility condition \eqref{CC}. Then the two following assertions are equivalent:\\
{\bf i.} The couple $(\zeta,q)$ is a regular solution to \eqref{eq:BP} such that the depth $h$ never vanishes and with boundary conditions \eqref{BCeqb} and initial condition \eqref{ICeqb}.\\
{\bf ii.} The couple $(\zeta,q)$ is a regular solution such that the depth $h$ never vanishes to 
\begin{equation}
\begin{cases}
\partial_t \zeta+\partial_x q=0,\\
\partial_t q + \sqrt{3}\frac{h_b}{\alpha_b }(h_b S)^* (R^1_b (\frac{1}{h_b^2}f_{\rm NSW}))
= R^0_b B + \dot{q}_0{\mathfrak s}^{(b,0)}+\dot{q}_\ell{\mathfrak s}^{(b,\ell)},
\end{cases}
\quad \text{in}\ (0,\ell),
\label{eq:BP-reformulated}
\end{equation}
where  we recall that $(h_b S)^*= \frac{1}{\sqrt{3}}\frac{1}{h_b} \partial_x(h_b^2\cdot ) + \frac{\sqrt{3}}{2}\partial_x b$ and that $B(\zeta,q)$ is given by~\eqref{defgb},
and with initial condition~\eqref{ICeqb}, while $q_0$ and $q_\ell$ solve the ODE
\begin{align}
&
{\mathfrak S}_b'
\begin{pmatrix} \dot{q}_0 \\ \dot{q}_\ell \end{pmatrix}+
V_{\rm bdry}(g_0,q_0,g_\ell,q_\ell)
 =V_{\rm int}[\zeta,q]
   -
 \begin{pmatrix}
 \ddot g_0 \\
 \ddot g_\ell
 \end{pmatrix},
  \label{eq:BP-boundarylayer}
\end{align}
with ${\mathfrak S}'_b$ defined in \eqref{defSbprime}, and with initial condition 
\begin{equation}\label{ICeq2b}
q_0(0)=q^{\rm in}(0) \quad \mbox{ and }\quad q_\ell(0)=q^{\rm in}(\ell).
\end{equation}
\end{proposition}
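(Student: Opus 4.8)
The plan is to follow the proof of Proposition~\ref{propreformflat} step by step, replacing the flat commutation $R^0\partial_x=\partial_x R^1$ by its topographic counterpart (Proposition~\ref{propcommutgen}) and using that ${\mathfrak s}^{(b,0)},{\mathfrak s}^{(b,\ell)}$ span the kernel of $1+h_b{\mathcal T}_b$ while taking the values $1,0$ and $0,1$ at the two endpoints. As in the flat case, $q_0,q_\ell$ denote the traces of $q$ at $x=0,\ell$ in the implication {\bf i}$\Rightarrow${\bf ii}, and the prescribed ODE solutions in the converse.

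For {\bf i}$\Rightarrow${\bf ii}, I would start from the second equation of \eqref{eq:BP} written as $(1+h_b{\mathcal T}_b)\partial_t q=-\partial_x f_{\rm NSW}-{\mathtt g}h\partial_x b$. Since the trace of $\partial_t q$ at the endpoints is $\dot q_0,\dot q_\ell$, the function $\partial_t q-\dot q_0{\mathfrak s}^{(b,0)}-\dot q_\ell{\mathfrak s}^{(b,\ell)}$ has homogeneous Dirichlet traces; as ${\mathfrak s}^{(b,0)},{\mathfrak s}^{(b,\ell)}$ lie in the kernel of $1+h_b{\mathcal T}_b$, applying this operator returns $-\partial_x f_{\rm NSW}-{\mathtt g}h\partial_x b$, so the function equals $R^0_b(-\partial_x f_{\rm NSW}-{\mathtt g}h\partial_x b)$. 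Invoking Proposition~\ref{propcommutgen} for $R^0_b\partial_x f_{\rm NSW}$ and regrouping the remaining topography contributions into $R^0_b B$, with $B$ as in \eqref{defgb}, then produces exactly \eqref{eq:BP-reformulated}.

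The ODE \eqref{eq:BP-boundarylayer} is next obtained by differentiating \eqref{eq:BP-reformulated} in $x$, replacing $\partial_t\partial_x q$ by $-\partial_t^2\zeta$ through the mass equation, and taking traces at $x=0,\ell$. The one genuinely new computation—the step I expect to be the main obstacle—is the boundary value of $\partial_x\big[\tfrac{h_b}{\alpha_b}(h_bS)^*w\big]$ with $w=R^1_b(\tfrac{1}{h_b^2}f_{\rm NSW})$. Writing $\phi=\tfrac{h_b}{\alpha_b}(h_bS)^*w$, the relation defining $R^1_b$ in \eqref{defR1b} gives $(h_bS)\phi=\tfrac{1}{h_b}f_{\rm NSW}-h_bw$, while the Neumann-type condition $(h_bS)^*w=0$ built into $R^1_b$ forces $\phi=0$ at the endpoints; there $(h_bS)\phi$ reduces to $-\tfrac{1}{\sqrt3}h_b\partial_x\phi$, so that $\sqrt3\,\partial_x\phi=3w-\tfrac{3}{h_b^2}f_{\rm NSW}$ on the boundary. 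Combined with the facts that $R^0_b B$ vanishes at the endpoints and that $\zeta(\cdot,0)=g_0,\zeta(\cdot,\ell)=g_\ell$ (whence $\partial_t^2\zeta=\ddot g_0,\ddot g_\ell$ there), the two traces of the differentiated equation assemble into \eqref{eq:BP-boundarylayer}, with $V_{\rm bdry}$ and $V_{\rm int}$ exactly as in \eqref{defVbdry}--\eqref{defVint}.

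For the converse {\bf ii}$\Rightarrow${\bf i}, I would apply $1+h_b{\mathcal T}_b$ to \eqref{eq:BP-reformulated}: the kernel terms $\dot q_0{\mathfrak s}^{(b,0)}+\dot q_\ell{\mathfrak s}^{(b,\ell)}$ drop out, $R^0_b B$ returns $B$, and Proposition~\ref{propcommutgen} read backwards turns the $(h_bS)^*R^1_b$ term into $\partial_x f_{\rm NSW}+\tfrac32\tfrac{\partial_x b}{h_b}f_{\rm NSW}$, which cancels against $B$ to recover \eqref{eq:BP}. Taking traces of the second equation of \eqref{eq:BP-reformulated}, and using $\phi=0$ and $R^0_b B=0$ at the endpoints, gives $\tfrac{{\rm d}}{{\rm d}t}q(\cdot,0)=\dot q_0$ and $\tfrac{{\rm d}}{{\rm d}t}q(\cdot,\ell)=\dot q_\ell$, so \eqref{ICeq2b} yields $q(\cdot,0)=q_0$ and $q(\cdot,\ell)=q_\ell$. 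Finally, rerunning the trace computation of the previous paragraph with $(\zeta_0,\zeta_\ell):=(\zeta(\cdot,0),\zeta(\cdot,\ell))$ in place of $(g_0,g_\ell)$ shows that $(\zeta_0,\zeta_\ell)$ satisfies the same second order ODE as $(g_0,g_\ell)$; the compatibility conditions \eqref{CC}, rewritten via $\partial_t\zeta=-\partial_x q$, match the value and first derivative of both solutions at $t=0$, so Cauchy--Lipschitz uniqueness (legitimate since $h>0$ keeps $f_{\rm NSW}$ smooth) forces $(\zeta_0,\zeta_\ell)=(g_0,g_\ell)$, i.e.\ the boundary conditions \eqref{BCeqb} hold.
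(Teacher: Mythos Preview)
Your proposal is correct and follows essentially the same approach as the paper's proof: apply $R^0_b$ and Proposition~\ref{propcommutgen} to obtain \eqref{eq:BP-reformulated}, differentiate and take traces to get \eqref{eq:BP-boundarylayer}, and run the converse exactly as in Proposition~\ref{propreformflat}. The only cosmetic difference is that the paper isolates your boundary computation of $\sqrt{3}\,\partial_x\phi$ as a standalone Lemma (proved via the decomposition $\sqrt{3}\,\partial_x=-3S+\beta$ rather than via $\phi_{|x=0,\ell}=0$ directly), and then simply writes ``the end of the proof is as in Proposition~\ref{propreformflat}'' for the reverse implication that you spell out in full.
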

\begin{remark}
When $b=0$ (flat topography), one can check that ${\mathfrak f}_b=R^1f_{\rm NSW}$ and ${\mathfrak g}_b=0$, so that \eqref{eq:BP-reformulated} coincides as expected with \eqref{eq:BA-reformulated}.
\end{remark}
\begin{proof}
We proceed as in the proof of Proposition \ref{propreformflat}. Applying $R^0_b$ to the equation in $q$, and using Proposition \ref{propcommutgen}, we first obtain
\begin{align*}
\partial_t q + \sqrt{3}\frac{h_b}{\alpha_b }(h_b S)^* (R^1_b (\frac{1}{h_b^2}f_{\rm NSW}))
&-\frac{3}{2}R^0_b \big(\frac{\partial_x b}{h_b} f_{\rm NSW}\big)\\*
&= - {\mathtt g} R_b^0(h\partial_x b) + \dot{q}_0{\mathfrak s}^{(b,0)}+\dot{q}_\ell{\mathfrak s}^{(b,\ell)}.
\end{align*}
Regrouping the terms involving $R_b^0$, one deduces the second equation of \eqref{eq:BP-reformulated}.
Applying $\partial_x$ to this equation and using the equation on $\zeta$, we then get
\begin{align}
\nonumber
-\partial_t^2\zeta + \sqrt{3} \partial_x\big[ \frac{h_b}{\alpha_b }(h_b S)^* (R^1_b (\frac{1}{h_b^2}f_{\rm NSW}))\big]
-\frac{3}{2}\partial_x\big[ R^0_b \big(\frac{\partial_x b}{h_b} f_{\rm NSW}\big) \big]\\
\label{eqSev}
= - {\mathtt g} \partial_x R_b^0(h\partial_x b) + \dot{q}_0({\mathfrak s}^{(b,0)})'+\dot{q}_\ell({\mathfrak s}^{(b,\ell)})'.
\end{align}
We now need the following lemma.
\begin{lemma}\label{lem:relation-Rb1}
For all $\widetilde{f}\in L^2(0,\ell)$, one has
$$
\sqrt{3} \partial_x\big[ \frac{h_b}{\alpha_b}(h_b S)^* (R^1_b \widetilde{f})\big]_{\vert_{x=0,\ell}}
= - 3\big[ ( \widetilde{f}- R^1_b\widetilde{f})\big]_{\vert_{x=0,\ell}}.
$$
\end{lemma}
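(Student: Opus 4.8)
The plan is to set $v := R^1_b\widetilde{f}$ and to single out the auxiliary function $w := \frac{h_b}{\alpha_b}(h_b S)^* v$, which is exactly the quantity whose derivative appears in the statement. First I would invoke the definition \eqref{defR1b} of $R^1_b$, which asserts that $\big[1 + \frac{1}{h_b}(h_b S)\big(\frac{h_b}{\alpha_b}(h_b S)^*\big)\big] v = \widetilde{f}$ together with the boundary conditions $(h_b S)^* v(0) = (h_b S)^* v(\ell) = 0$. Rewritten in terms of $w$, the interior equation collapses to
$$
(h_b S)\, w = h_b(\widetilde{f} - v),
$$
while the two boundary conditions become simply $w(0) = w(\ell) = 0$, since $\alpha_b$ and $h_b$ never vanish (the latter by \eqref{condhb}).

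The crucial step is then to evaluate $(h_b S)\,w$ at the endpoints using the explicit form of $S$ from \eqref{defS}, namely
$$
(h_b S)\, w = -\frac{1}{\sqrt{3}} h_b^2\, \partial_x\Big(\frac{w}{h_b}\Big) + \frac{\sqrt{3}}{2}(\partial_x b)\, w.
$$
At $x = 0, \ell$ the zeroth-order term $\frac{\sqrt{3}}{2}(\partial_x b)\,w$ vanishes because $w$ does, and in the first term the product rule $\partial_x(\frac{w}{h_b}) = \frac{\partial_x w}{h_b} - \frac{w\,\partial_x h_b}{h_b^2}$ loses its second contribution for the same reason, leaving $(h_b S)\,w|_{x=0,\ell} = -\frac{h_b}{\sqrt{3}}\,\partial_x w|_{x=0,\ell}$. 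This boundary bookkeeping — remembering that it is $w$ itself, rather than $w/h_b$ or any interior trace, that vanishes at the endpoints — is the only place where a little care is needed, and is what I expect to be the (minor) main obstacle.

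To conclude, I would combine this with the interior equation evaluated at $x = 0, \ell$ to obtain $-\frac{h_b}{\sqrt{3}}\,\partial_x w = h_b(\widetilde{f} - v)$ there; cancelling the nonvanishing factor $h_b$ and multiplying by $-\sqrt{3}$ gives $\sqrt{3}\,\partial_x w|_{x=0,\ell} = -3(\widetilde{f} - v)|_{x=0,\ell}$. Recalling $w = \frac{h_b}{\alpha_b}(h_b S)^*(R^1_b\widetilde{f})$ and $v = R^1_b\widetilde{f}$, this is precisely the asserted identity. No functional-analytic subtlety arises beyond the regularity $R^1_b : L^2(0,\ell) \to H^2(0,\ell)$ already built into the definition, so the entire argument reduces to an algebraic manipulation followed by a careful boundary evaluation.
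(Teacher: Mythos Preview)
Your proof is correct and is essentially the same as the paper's. The paper writes the relationship between $\partial_x$ and $S$ in the form $\sqrt{3}\,\partial_x = -3S + \beta$ for some zeroth-order multiplier $\beta$, applies this to $w := \frac{h_b}{\alpha_b}(h_bS)^*(R^1_b\widetilde{f})$, and then uses the same two ingredients you use: the defining equation $(h_bS)w = h_b(\widetilde{f}-R^1_b\widetilde{f})$ and the boundary condition $(h_bS)^*(R^1_b\widetilde{f})|_{x=0,\ell}=0$ (hence $w|_{x=0,\ell}=0$), which kills the $\beta$ term at the endpoints. You instead expand $(h_bS)w$ explicitly via \eqref{defS} and read off that, at the endpoints, all zeroth-order contributions in $w$ drop out, leaving $(h_bS)w|_{x=0,\ell} = -\frac{h_b}{\sqrt{3}}\partial_x w|_{x=0,\ell}$; this is the same algebraic identity viewed from the other side.
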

\begin{proof}
One can notice that $\sqrt{3}\partial_x=-3 S+\beta$, where $\beta$ is a function whose exact expression is of no importance here. We then have
$$
\sqrt{3} \partial_x\big[ \frac{h_b}{\alpha_b }(h_b S)^* (R^1_b \widetilde{f})\big]=-\frac{3}{h_b} (h_b S)\big[ \frac{h_b}{\alpha_b }(h_b S)^* (R^1_b \widetilde{f})\big]+\beta \frac{h_b}{\alpha_b }(h_b S)^* (R^1_b \widetilde{f});
$$
by definition of $R_b^1$, one has $(h_b S)\big[ \frac{h_b}{\alpha_b }(h_b S)^* (R^1_b \widetilde{f})\big]=h_b \widetilde{f}-h_b R^1_b\widetilde{f}$ so that
$$
\sqrt{3} \partial_x\big[ \frac{h_b}{\alpha_b }(h_b S)^* (R^1_b \widetilde{f})\big]=-3 ( \widetilde{f}- R^1_b\widetilde{f})+\beta \frac{h_b}{\alpha_b }(h_b S)^* (R^1_b \widetilde{f}).
$$
Since by construction $(h_b S)^*(R^1_b \widetilde{f})$ vanishes at $x=0$ and $x=\ell$, the result follows upon taking the trace of the above identity at the two boundary points.
\end{proof}
Taking the trace of \eqref{eqSev} at $x=0$ ad $x=\ell$ and using the lemma with $\widetilde{f}=\frac{1}{h_b^2}f_{\rm NSW}$, we get that $q_0$ and $q_l$ satisfy the differential system
$$
\begin{cases}
\dot{q}_0{\mathfrak s}'_{(b,0)}(0)+\dot{q}_\ell{\mathfrak s}'_{(b,\ell)}(0) - {\mathtt g} \big(\partial_x R_b^0(h\partial_x b)\big)_0 = \\
\qquad - \ddot{g}_0 
-\frac{3}{h_b(0)^2}f_{\rm NSW}(g_0,q_0)
+3 \big(R^1_b(\frac{1}{h_b^2} f_{\rm NSW})\big)_0
-\frac{3}{2}\big[ \partial_xR^0_b \big(\frac{\partial_x b}{h_b} f_{\rm NSW}\big) \big]_0,\\
\dot{q}_\ell{\mathfrak s}'_{(b,0)}(\ell)+\dot{q}_\ell{\mathfrak s}'_{(b,\ell)}(\ell) - {\mathtt g} \big(\partial_x R_b^0(h\partial_x b)\big)_\ell = \\
\qquad -\ddot{g}_\ell
-\frac{3}{h_b(\ell)^2}f_{\rm NSW}(g_\ell,q_\ell)
+3 \big(R^1_b(\frac{1}{h_b^2} f_{\rm NSW})\big)_\ell
-\frac{3}{2}\big[ \partial_x R^0_b \big(\frac{\partial_x b}{h_b} f_{\rm NSW}\big) \big]_\ell,
\end{cases}
$$
which corresponds to~\eqref{eq:BP-boundarylayer}. The end of the proof is as in Proposition~\ref{propreformflat}.
\end{proof}
The well-posedness of the initial value problem given in the second point of the proposition is simply obtained as for Proposition \ref{propWPBA} from Cauchy-Lipschitz theorem.
\begin{proposition}\label{propWPBP}
Assume that $b$ is a smooth function satisfying~\eqref{condhb} and~\eqref{condhb2}. Let $g_0,g_\ell \in C^\infty({\mathbb R}^+)$. Let also $n\in {\mathbb N}\backslash\{0\}$ and $(\zeta^{\rm in},q^{\rm in})\in H^{n}(0,\ell)\times H^{n+1}(0,\ell)$ be such that $\inf_{[0,\ell]} (h_0-b+\zeta^{\rm in})>0$ and  assume that the compatibility conditions~\eqref{CC} hold.
Then there exists a maximal existence time $T^*>0$ and a unique solution $(\zeta,q,q_0,q_\ell)\in C^\infty([0,T^*); H^{n}(0,\ell)\times H^{n+1}(0,\ell)\times {\mathbb R}^2)$ to  \eqref{eq:BP-reformulated} and \eqref{eq:BP-boundarylayer} with initial conditions \eqref{ICeqb} and \eqref{ICeq2b}, and for all $t\in [0,T^*)$, one has $q_0(t)=q(t,0)$ and $q_\ell(t)=q(t,\ell)$, and $\inf_{[0,\ell]} (h_0-b+\zeta(t,\cdot))>0$.
\end{proposition}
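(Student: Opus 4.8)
The plan is to read \eqref{eq:BP-reformulated}--\eqref{eq:BP-boundarylayer} as a (time-dependent) ordinary differential equation
\[
\frac{\mathrm d}{\mathrm dt}U=\mathcal F(t,U),\qquad U=(\zeta,q,q_0,q_\ell),
\]
posed on the Banach space $X^n:=H^n(0,\ell)\times H^{n+1}(0,\ell)\times\mathbb R^2$, and to invoke the Cauchy--Lipschitz theorem exactly as in the paragraph preceding Proposition~\ref{propWPBA}. The first step is to solve \eqref{eq:BP-boundarylayer} for $(\dot q_0,\dot q_\ell)$: since \eqref{condhb} and \eqref{condhb2} hold, Proposition~\ref{propS'} guarantees that $\mathfrak S_b'$ is invertible, so
\[
\begin{pmatrix}\dot q_0\\ \dot q_\ell\end{pmatrix}
=(\mathfrak S_b')^{-1}\Big[\,V_{\rm int}[\zeta,q]-\begin{pmatrix}\ddot g_0\\ \ddot g_\ell\end{pmatrix}-V_{\rm bdry}(g_0,q_0,g_\ell,q_\ell)\,\Big],
\]
and substituting this into the second line of \eqref{eq:BP-reformulated} exhibits the full right-hand side $\mathcal F$ as an explicit expression in $(\zeta,q,q_0,q_\ell)$ and $t$ (the time dependence entering only through $g_0,g_\ell$ and their derivatives).

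Next I would check that $\mathcal F$ is well defined, tangent to $X^n$, and smooth on the open set $\mathcal U=\{U\in X^n:\inf_{[0,\ell]}(h_0-b+\zeta)>0\}$. Since $n\ge1$, $H^n(0,\ell)$ is a Banach algebra embedded in $C^0([0,\ell])$, so on $\mathcal U$ the maps $(\zeta,q)\mapsto f_{\rm NSW}$, $\tfrac1{h_b^2}f_{\rm NSW}$ and $B(\zeta,q)$ are smooth into $H^n(0,\ell)$, the dependence on $1/h$ being confined to the positivity set. The coercivity estimate established inside Proposition~\ref{propS'} makes $1+h_b\mathcal T_b$ an elliptic isomorphism, and the second-order operator defining $R^1_b$ in \eqref{defR1b} is likewise elliptic; hence $R^0_b$ and $R^1_b$ each gain two derivatives. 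Therefore $R^1_b(\tfrac1{h_b^2}f_{\rm NSW})\in H^{n+2}$, the first-order operator $\sqrt3\frac{h_b}{\alpha_b}(h_bS)^*$ brings it to $H^{n+1}$, and $R^0_bB\in H^{n+2}$; together with the smooth functions $\mathfrak s^{(b,0)},\mathfrak s^{(b,\ell)}$ this gives $\partial_t q\in H^{n+1}$, while $\partial_t\zeta=-\partial_x q\in H^n$. The same regularity shows that the traces entering $V_{\rm int}[\zeta,q]$, namely $R^1_b(\tfrac1{h_b^2}f_{\rm NSW})|_{x=0,\ell}$ and $\partial_x(R^0_bB)|_{x=0,\ell}$, are well defined and depend smoothly on $(\zeta,q)$. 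Thus $\mathcal F$ is $C^\infty$ in $U$ on $\mathcal U$ and $C^\infty$ in $t$, so Cauchy--Lipschitz yields a unique maximal solution $U\in C^1([0,T^*);X^n)$, which a standard bootstrap (differentiating the equation in time) upgrades to $C^\infty([0,T^*);X^n)$.

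It then remains to identify the scalar components with the traces of $q$ and to control the depth. Taking the trace at $x=0$ of the second equation of \eqref{eq:BP-reformulated}, the interior term vanishes because $(h_bS)^*\big(R^1_b(\cdot)\big)$ vanishes at the endpoints by construction of $R^1_b$ and $R^0_bB$ vanishes there by the Dirichlet condition defining $R^0_b$; using $\mathfrak s^{(b,0)}(0)=1$ and $\mathfrak s^{(b,\ell)}(0)=0$ this reduces to $\tfrac{\mathrm d}{\mathrm dt}q(t,0)=\dot q_0(t)$, and symmetrically $\tfrac{\mathrm d}{\mathrm dt}q(t,\ell)=\dot q_\ell(t)$. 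Since \eqref{ICeq2b} imposes $q(0,0)=q_0(0)$ and $q(0,\ell)=q_\ell(0)$, integration gives $q(t,0)=q_0(t)$ and $q(t,\ell)=q_\ell(t)$ on $[0,T^*)$. Finally, as $\zeta\in C^0([0,T^*);H^n)\hookrightarrow C^0([0,T^*)\times[0,\ell])$ and $\inf_{[0,\ell]}(h_0-b+\zeta^{\rm in})>0$, continuity preserves the strict positivity of $h_0-b+\zeta(t,\cdot)$, so the solution stays in $\mathcal U$ on a nontrivial interval and $T^*$ is characterized by blow-up of the $X^n$-norm or by the depth approaching $0$.

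The main obstacle is not a single hard estimate but the bookkeeping of mapping properties: one must track the two-derivative gain of $R^0_b,R^1_b$ (which rests on the coercivity of $1+h_b\mathcal T_b$ from Proposition~\ref{propS'}) against the one-derivative loss of $(h_bS)^*$ and of the boundary traces, so as to confirm that $\mathcal F$ genuinely maps $\mathcal U$ into $X^n$ and is locally Lipschitz there. This is precisely why the hypothesis $\inf_{[0,\ell]}(h_0-b+\zeta^{\rm in})>0$ is imposed, and why \eqref{condhb2}, through Proposition~\ref{propS'}, is needed both for the ellipticity of $R^0_b$ and for the invertibility of $\mathfrak S_b'$ used to isolate $(\dot q_0,\dot q_\ell)$.
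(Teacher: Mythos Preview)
Your proposal is correct and follows essentially the same approach as the paper: both recast \eqref{eq:BP-reformulated}--\eqref{eq:BP-boundarylayer} as an ODE $\dot U=\mathcal F(t,U)$ on $H^n\times H^{n+1}\times\mathbb R^2$, use Proposition~\ref{propS'} to invert $\mathfrak S_b'$, invoke the two-derivative smoothing of $R^0_b,R^1_b$ to check $\mathcal F$ maps the positivity set into the space and is locally Lipschitz, and conclude by Cauchy--Lipschitz. The only cosmetic difference is that you spell out the trace argument giving $q(t,0)=q_0(t)$ directly, whereas the paper defers this to Proposition~\ref{propreformgen}.
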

\begin{proof}
The system \eqref{eq:BP-reformulated}-\eqref{eq:BP-boundarylayer} can be rewritten as 
$$
\frac{{\rm d}}{{\rm d} t} {\mathbb U}={\mathbb F}[t,{\mathbb U}]
$$
with ${\mathbb U}=(\zeta,q, q_0,q_\ell)^{\rm T}$ and 
$$
{\mathbb F}[t,{\mathbb U}]=(-\partial_x q,-\partial_x ({\mathfrak f}_b[\zeta,q])+{\mathfrak g}_b[\zeta,q]+\begin{psmallmatrix} {\mathfrak s}^{(b,0)} \\ {\mathfrak s}^{(b,\ell)}\end{psmallmatrix}\cdot \boldsymbol{\mathfrak h}_b[t,{\mathbb U}],
 \boldsymbol{\mathfrak h}_b[t,{\mathbb U}]^{\rm T})^{\rm T},
$$
where
$$
\boldsymbol{\mathfrak h}_b[t,{\mathbb U}]=({\mathfrak S}_b')^{-1}\big[ -V_{\rm bdry}(g_0(t),q_0,g_\ell(t),q_\ell)+V_{\rm int}[\zeta,q]-\begin{pmatrix} \ddot{g}_0(t) \\ \ddot{g}_\ell(t) \end{pmatrix} \big].
$$
Let ${\mathbb X}=H^n(0,\ell)\times H^{n+1}(0,\ell)\times {\mathbb R}^2$ and $\Omega\subset {\mathbb X}$ the open subset of ${\mathbb X}$ consisting of all ${\mathbb U}=(\zeta,q, q_0,q_\ell)^{\rm T}\in {\mathbb X}$ such that $\inf_{(0,\ell)} (h_0+\zeta-b)>0$. Then it follows from standard product estimates, and because $R^0_b$ and $R^1_b$ map $H^n(0,\ell)$ to $H^{n+2}(0,\ell)$, that ${\mathbb F} : {\mathbb R}^+\times \Omega \to {\mathbb X}$ is well-defined, continuous and locally-Lipschitz with respect to ${\mathbb U}$. By Cauchy-Lipschitz theorem, for all initial data ${\mathbb U}^{\rm in}\in \Omega$ there exists therefore a maximal solution ${\mathbb U}\in C^1([0,T^*],\Omega)$, with $T^*>0$, and moreover this solution belongs to ${\mathbb U}\in C^\infty([0,T^*],\Omega)$.
The result then follows from Proposition~\ref{propreformgen}.
\end{proof}

\subsection{A reformulation adapted to well-balancedness}
\label{subsec:unify-models-wb}

Since the steady state $(\zeta,q)=(0,0)$ (lake at rest) solves the system \eqref{eq:BP-reformulated}-\eqref{eq:BP-boundarylayer} with boundary data $g_0=g_\ell=0$, one gets in particular the following identity from the second equation  of \eqref{eq:BP-reformulated}
$$
\sqrt{3}\frac{h_b}{\alpha_b}(h_b S)^* (R^1_b (\frac{1}{h_b^2}f_{\rm NSW}(0,0)))=R^0_b B(0,0).
$$
In the presence of topography, both terms of this identity are nonzero. Subtracting this identity to the second equation of  \eqref{eq:BP-reformulated}, one obtains the following equivalent formulation
\begin{equation}
\begin{cases}
\partial_t \zeta+\partial_x q=0,\\
\partial_t q + \sqrt{3}\frac{h_b}{\alpha_b }(h_b S)^* (R^1_b (\frac{1}{h_b^2} \widetilde{f}_{\rm NSW}))
= R^0_b \widetilde{B} + \dot{q}_0{\mathfrak s}^{(b,0)}+\dot{q}_\ell{\mathfrak s}^{(b,\ell)},
\end{cases}
\quad \text{in}\ (0,\ell),
\label{eq:BP-reformulated2}
\end{equation}
where, denoting $\widetilde{f}_{\rm NSW}(\zeta,q):=f_{\rm NSW}(\zeta,q)-f_{\rm NSW}(0,0)$, one has
\begin{equation}\label{defBtilde}
\widetilde{B}(\zeta,q)=-{\mathtt g} \zeta \partial_x b+\frac{3}{2}\frac{\partial_x b}{h_b}\widetilde{f}_{\rm NSW}.
\end{equation}
Proceeding similarly with \eqref{eq:BP-boundarylayer} we get that $q_0$ and $q_\ell$ solve the ODE 
\begin{align}
&
{\mathfrak S}_b'
\begin{pmatrix} \dot{q}_0 \\ \dot{q}_\ell \end{pmatrix}+
\widetilde{V}_{\rm bdry}(g_0,q_0,g_\ell,q_\ell)
 =\widetilde{V}_{\rm int}[\zeta,q]
   -
 \begin{pmatrix}
 \ddot g_0 \\
 \ddot g_\ell
 \end{pmatrix},
  \label{eq:BP-boundarylayer2}
\end{align}
with
\begin{equation}\label{defVbdrytilde}
\widetilde{V}_{\rm bdry}(\zeta_0,q_0,\zeta_\ell,q_\ell)=\begin{pmatrix}
 \frac{3}{h_b(0)^2}\widetilde{f}_{\rm NSW} (\zeta_0,q_0) \\
\frac{3}{h_b(\ell)^2}\widetilde{f}_{\rm NSW} (\zeta_\ell,q_\ell) 
 \end{pmatrix}
 \end{equation}
 and
 \begin{equation}\label{defVint_}
\widetilde{V}_{\rm int}[\zeta,q]=
\begin{pmatrix}
3(R_b^1 (\frac{1}{h_b^2}\widetilde{f}_{\rm NSW}) )_{\vert_{x=0}}    - \big(\partial_x (R^0_b\widetilde{B}) \big)_{\vert_{x=0}}  \\
3(R_b^1 (\frac{1}{h_b^2}\widetilde{f}_{\rm NSW}) )_{\vert_{x=\ell}}   -\big( \partial_x (R^0_b \widetilde{B})\big)_{\vert_{x=\ell}}
 \end{pmatrix},
\end{equation}
In order to obtain well-balanced numerical schemes, we will use \eqref{eq:BP-reformulated2} and \eqref{eq:BP-boundarylayer2} rather than \eqref{eq:BP-reformulated} and \eqref{eq:BP-boundarylayer}.

\section{More general boundary conditions}
\label{subsec:generic-bnd}

In the previous sections,  we considered the Boussinesq-Abbott system \eqref{eq:BP} on a finite interval $(0,\ell)$ and with imposed boundary conditions at $x=0$ and $x=\ell$ on the surface elevation $\zeta$. In some situations, one may have to impose rather a boundary condition on $q$, or an a function of $\zeta$ and $q$ such as a Riemann invariant of the nonlinear shallow water system. 

Denoting $(\zeta_0,q_0)=(\zeta,q)_{\vert_{x=0}}$ and $(\zeta_\ell,q_\ell)=(\zeta,q)_{\vert_{x=\ell}}$, we study in this section the possibility of imposing more general boundary conditions which take the form
\begin{align}\label{eq:generic-bnd}
  \xi^{+}_0(\zeta_0, q_0) = g_0,\qquad
  \xi^{-}_\ell(\zeta_\ell, q_\ell) = g_\ell,
\end{align}
where the data $(g_0,g_\ell)$ are given functions of time, and where $\xi^+_0(\zeta,q)$ and $\xi^-_\ell(\zeta,q)$ are functions of $\zeta$ and $q$; we refer to $\xi^+_0(\zeta,q)$ and $\xi^-_\ell(\zeta,q)$ as the \emph{input} functions.

In order to solve the initial boundary value problem associated with \eqref{eq:generic-bnd}, an essential point is to compute the traces of $\zeta$ and $q$ at the boundaries $x=0$ and $x=\ell$. This is done in two steps. We first need two {\it output functions} that can be found in terms of the input functions; we then reconstruct the traces of $\zeta$ and $q$ in terms of the input and output functions. These two steps are explained in~\S \ref{subsectoutput}. We then derive in~\S \ref{secteqout} an ODE that can be used to compute the output functions. In~\S \ref{sectcompat}, we discuss the issue of providing initial data to this ODE, as well as the issue of compatibility conditions between initial and boundary data. We can then state and prove in~\S \ref{sectWP} our main well-posedness result. We finally introduce in~\S \ref{sectAS} the issue of asymptotic stability which is an open theoretical problem that will be numerically investigated later in Section~\ref{sectnum}.

\subsection{The output functions and the reconstruction mappings}\label{subsectoutput}

 We assume that there exist two other functions $ \xi^{-}_0(\cdot, \cdot)$ and $ \xi^{+}_\ell(\cdot, \cdot)$, referred to as the \emph{output} functions, and such that $(\zeta_0,q_0)$ can be recovered from the knowledge of $g_0$ and $ \xi^{-}_0(\zeta_0, q_0)$ and similarly, $(\zeta_\ell,q_\ell)$ can be recovered from the knowledge of $g_\ell$ and $ \xi^{+}_0(\zeta_\ell, q_\ell)$. More precisely, we make the following assumption.
\begin{assumption}\label{assH}
There exists a non-empty connected open set ${\mathcal U}\subset {\mathbb R}^2$ verifying $(0,0)\in \mathcal U$ such that the following conditions hold:\\
{\bf i.} The input functions $\xi^+_0: {\mathcal U}\to {\mathbb R}$ and  $\xi^-_\ell: {\mathcal U}\to {\mathbb R}$ are well defined and smooth;\\
{\bf ii.} There are two output functions $\xi^-_0: {\mathcal U}\to {\mathbb R}$ and  $\xi^+_\ell: {\mathcal U}\to {\mathbb R}$ and two open sets ${\mathcal V}_0$ and ${\mathcal V}_\ell$ that contain the range of the mappings  $\xi_0: {\mathcal U}\ni (\zeta,q)\mapsto (\xi_0^+(\zeta,q),\xi_0^-(\zeta,q))\in {\mathbb R}^2$  and $\xi_\ell: {\mathcal U}\ni (\zeta,q)\mapsto (\xi_\ell^+(\zeta,q),\xi_\ell^-(\zeta,q))\in {\mathbb R}^2$ respectively, as well as two smooth mappings ${\mathcal H}_0: {\mathcal V}_0\to {\mathbb R}^2$ and ${\mathcal H}_\ell: {\mathcal V}_\ell \to {\mathbb R}^2$ such that for all $(\zeta,q)\in {\mathcal U}$, one has
\begin{equation}\label{eq:H-reconstruction}
{\mathcal H}_0(\xi^+_0(\zeta,q),\xi^-_0(\zeta,q))=(\zeta,q)^{\rm T}
\quad \mbox{ and }\quad
{\mathcal H}_\ell(\xi^+_\ell(\zeta,q),\xi^-_\ell(\zeta,q))=(\zeta,q)^{\rm T};
\end{equation}
we refer to ${\mathcal H}_0$ and ${\mathcal H}_\ell$ as the \emph{reconstruction} mappings.
\end{assumption}
\begin{example}\label{ex:xi-H}
For instance, let us impose the boundary conditions $\zeta_0=g_0$ and $R^-(\zeta_\ell,q_\ell)=g_\ell$, where $R^-(\zeta,q)=\frac{q}{h_0 + \zeta - b} - 2\sqrt{{\mathtt g}(h_0 + \zeta - b)}$ is the left-going Riemann invariant associated with the nonlinear shallow water equations. With the above notations, this amounts to take
$$
\xi_0^+(\zeta,q)=\zeta \quad\mbox{ and }\quad \xi_\ell^-(\zeta,q)=R^-(\zeta,q).
$$
We can then choose
$$
\xi_0^-(\zeta,q)=q \quad\mbox{ and }\quad \xi_\ell^+(\zeta,q)=R^+(\zeta,q),
$$
where  $R^+=\frac{q}{h_0 + \zeta - b} + 2\sqrt{{\mathtt g}(h_0 + \zeta - b)}$ is the right-going analogue of $R^-$. The corresponding maps ${\mathcal H}_0$ and   ${\mathcal H}_\ell$ are then given by
\begin{align*}
{\mathcal H}_0(\xi^+,\xi^-)&=(\xi^+,\xi^-)^{\rm T},\\
{\mathcal H}_\ell(\xi^+,\xi^-)&= \big(\frac{1}{16{\mathtt g}}(\xi^+ - \xi^-)^2 - h_0 + b, \frac{\xi^+ + \xi^-}{32{\mathtt g}} (\xi^+-\xi^-)^2 \big)^{\rm T}.
\end{align*}
\end{example}

\subsection{Equations for the output functions}\label{secteqout}

If Assumption \ref{assH} holds, the knowledge of the output functions $\xi_0^-(\zeta,q)$ and $\xi_\ell^+(\zeta,q)$, together with the boundary conditions set on  the input functions $\xi_0^+(\zeta,q)$ and $\xi_\ell^-(\zeta,q)$ allow one to determine the traces of $\zeta$ and $q$ at $x=0$ and $x=\ell$. The issue know is to be able to compute the output functions $\xi_0^-(\zeta,q)$ and $\xi_\ell^+(\zeta,q)$. In the (hyperbolic) case of the nonlinear shallow water equations, this can be done using the characteristic equations satisfied by the Riemann invariants, provided that the input functions $\xi^\pm_0$ and $\xi^\pm_\ell$ are correctly chosen (see~\cite{IguchiLannes} for a full analysis of $1D$ hyperbolic initial boundary value problems). 

Due to the presence of dispersion, there are no Riemann invariants associated with the Boussinesq-Abbott model. However, we can notice that the relation~\eqref{eq:BP-boundarylayer2} stems from a more general differential identity that relates $\zeta_0$, $q_0$, $\zeta_\ell$ and $q_\ell$, namely,
\begin{align}
&
{\mathfrak S}_b'
\begin{pmatrix} \dot{q}_0 \\ \dot{q}_\ell \end{pmatrix}+
\widetilde{V}_{\rm bdry}(\zeta_0,q_0,\zeta_\ell,q_\ell)
 =\widetilde{V}_{\rm int}[\zeta,q]
   -
 \begin{pmatrix}
 \ddot \zeta_0 \\
 \ddot \zeta_\ell
 \end{pmatrix} ;
  \label{eq:BP-boundarylayer3}
\end{align}
in fact we see that~\eqref{eq:BP-boundarylayer3} reduces to~\eqref{eq:BP-boundarylayer2} when enforcing $(\zeta_0,\zeta_\ell) = (g_0,g_\ell)$. We now show how to use this differential identity to compute $\underline{\xi}_0^-:=\xi_0^-({\zeta}_0,{q}_0)$ and $\underline{\xi}_\ell^+:=\xi_\ell^+(\zeta_\ell,q_\ell)$ when enforcing the general boundary conditions~\eqref{eq:generic-bnd}. By definition of ${\mathcal H}_0$ and ${\mathcal H}_\ell$, and using the boundary conditions \eqref{eq:generic-bnd}, one has
$$
\begin{pmatrix}
\zeta_0 \\ q_0 \end{pmatrix}={\mathcal H}_0(g_0,\underline{\xi}_0^-)
\quad \mbox{ and }\quad
\begin{pmatrix}
\zeta_\ell \\ q_\ell \end{pmatrix}={\mathcal H}_\ell(\underline{\xi}_\ell^+,g_\ell);
$$
denoting  ${\mathcal H}_0=({\mathcal H}_{0,1},{\mathcal H}_{0,2})^{\rm T}$ and  ${\mathcal H}_\ell=({\mathcal H}_{\ell,1},{\mathcal H}_{\ell,2})^{\rm T}$, one deduces that
\begin{equation}\label{dotq}
\dot{q}_0=\nabla {\mathcal H}_{0,2}(g_0,\underline{\xi}^-_0)\cdot \begin{pmatrix} \dot{g}_0 \\ \dot{\underline{\xi}}_0^- \end{pmatrix}
\quad\mbox{ and }\quad
\dot{q}_\ell=\nabla {\mathcal H}_{\ell,2}(\underline{\xi}_\ell^+,g_\ell)\cdot \begin{pmatrix} \dot{\underline{\xi}}_\ell^+ \\ \dot{g}_\ell \end{pmatrix}
\end{equation}
as well as
\begin{align*}
\ddot{\zeta}_0&=\nabla {\mathcal H}_{0,1}(g_0,\underline{\xi}^-_0)\cdot \begin{pmatrix} \ddot{g}_0 \\ \ddot{\underline{\xi}}_0^- \end{pmatrix}
+ \begin{pmatrix} \dot{g}_0 \\ \dot{\underline{\xi}}_0^- \end{pmatrix}\cdot {\rm Hess}_{{\mathcal H}_{0,1}}(g_0,\underline{\xi}^-_0) \begin{pmatrix} \dot{g}_0 \\ \dot{\underline{\xi}}_0^- \end{pmatrix}\\
\ddot{\zeta}_\ell&=\nabla {\mathcal H}_{\ell,1}(\underline{\xi}_\ell^+,g_\ell)\cdot \begin{pmatrix} \ddot{\underline{\xi}}_\ell^+ \\ \ddot{ g_\ell} \end{pmatrix}
+ \begin{pmatrix} \dot{\underline{\xi}}_\ell^+ \\ \dot{g}_\ell  \end{pmatrix}\cdot {\rm Hess}_{{\mathcal H}_{\ell,1}}(\underline{\xi}_\ell^+,g_\ell) \begin{pmatrix}  \dot{\underline{\xi}}_\ell^+ \\ \dot{g}_\ell \end{pmatrix},
\end{align*}
where ${\rm Hess}_{{\mathcal H}_{0,1}}$ and ${\rm Hess}_{{\mathcal H}_{\ell,1}}$  denote the $2\times 2$ Hessian matrices of ${\mathcal H}_{0,1}$ and  ${\mathcal H}_{\ell,1}$ respectively.
Introducing the matrices
$$
D_j=\begin{pmatrix} \partial_1{\mathcal H}_{0,j}(g_0,\underline{\xi}^-_0) & 0 \\ 0 & \partial_2 {\mathcal H}_{\ell,j}(\underline{\xi}_\ell^+,g_\ell) \end{pmatrix}
$$
and
$$
\widetilde{D}_j=\begin{pmatrix} \partial_2{\mathcal H}_{0,j}(g_0,\underline{\xi}^-_0) & 0 \\ 0 & \partial_1 {\mathcal H}_{\ell,j}(\underline{\xi}_\ell^+,g_\ell) \end{pmatrix},
$$
and the quadratic forms defined on ${\mathbb R}^2$ by
\begin{align*}
{\mathcal Q_1}(u_0,v_0)&= 
 \begin{pmatrix} u_0 \\ v_0 \end{pmatrix}\cdot {\rm Hess}_{{\mathcal H}_{0,1}}(g_0,\underline{\xi}^-_0) \begin{pmatrix} u_0 \\ v_0\end{pmatrix},\\
 {\mathcal Q_2}(u_\ell,v_\ell)&= 
 \begin{pmatrix} u_\ell \\ v_\ell \end{pmatrix}\cdot {\rm Hess}_{{\mathcal H}_{\ell,1}}(\underline{\xi}_\ell^+,g_\ell) \begin{pmatrix} u_\ell \\ v_\ell\end{pmatrix},
\end{align*}
we deduce from \eqref{eq:BP-boundarylayer3} the following systems of two scalar ODEs on $(\underline{\xi}_0^-,\underline{\xi}_\ell^+)$, 
\begin{align}
\nonumber
\widetilde{D}_1\begin{pmatrix}  \ddot{\underline{\xi}}^-_0 \\ \ddot{\underline{\xi}}^+_\ell \end{pmatrix}
+
{\mathfrak S}'_b\widetilde{D}_2\begin{pmatrix}  \dot{\underline{\xi}}^-_0 \\ \dot{\underline{\xi}}^+_\ell \end{pmatrix}
&+
\begin{pmatrix}
{\mathcal Q}_1(\dot{g}_0,\dot{\underline{\xi}}_0^-) \\
{\mathcal Q}_2(\dot{\underline{\xi}}_\ell^+,\dot{g}_\ell)
\end{pmatrix}
+
\widetilde{V}_{\rm bdry}\big( {\mathcal H}_0(g_0,\underline{\xi}_0^-),{\mathcal H}_\ell(\underline{\xi}_\ell^+,g_\ell)\big)  \\
\label{CBODE}
& =\widetilde{V}_{\rm int}[\zeta,q]
 -
 {D}_1\begin{pmatrix}  \ddot{g}_0 \\ \ddot{g}_\ell \end{pmatrix}
-
{\mathfrak S}'_b{D}_2\begin{pmatrix}  \dot{g}_0 \\ \dot{g}_\ell \end{pmatrix}.
\end{align}
The order of this system of ODEs depends on whether the coefficients of $\widetilde{D}_1$ vanish or not. We make the following assumption which ensures that \eqref{CBODE} can be put as a system of explicit ODEs of first or second order on $\underline{\xi}_0^-$ and $\underline{\xi}_\ell^+$.
\begin{assumption}\label{assH2}
Under Assumption \ref{assH} and with the same notations, we assume that:\\
{\bf i.} Either $\partial_2{\mathcal H}_{0,1}\equiv0$ and $\partial_2{\mathcal H}_{0,2}$ does not vanish on ${\mathcal V}_0$, or $\partial_2{\mathcal H}_{0,1}$ does not vanish on ${\mathcal V}_0$;\\
{\bf ii.} Either $\partial_1{\mathcal H}_{\ell,1}\equiv0$ and $\partial_1{\mathcal H}_{\ell,2}$ does not vanish on ${\mathcal V}_0$, or $\partial_1{\mathcal H}_{\ell,1}$ does not vanish on ${\mathcal V}_\ell$
\end{assumption}
\begin{remark}\label{remvan}
If $\partial_2{\mathcal H}_{0,1}\equiv0$ on ${\mathcal V}_0$ then ${\mathcal Q}_1(\dot{g}_0,\dot{\underline{\xi}_0^-})=\partial_1^2{\mathcal H}_{0,1}(g_0,\underline{\xi}_0^-)\dot{g}_0^2$ which is independent of $\underline{\xi}_0^-$. The system of ODEs \eqref{CBODE} is therefore of order $1$ in $\underline{\xi}_0^-$ and can be put in explicit form if $\partial_2{\mathcal H}_{0,2}$ does not vanish. A similar comment can be made for the ODE on $\underline{\xi}_\ell^+$.
\end{remark}
\begin{example}\label{ex:xi-H2}
Considering the same configuration as in Example \ref{ex:xi-H}, and with the same notations, one readily checks that $\partial_2{\mathcal H}_{0,1}=0$ and $\partial_2{\mathcal H}_{0,2}=1$ so that the first point of the assumption is satisfied. Moreover, in this case,  ${\mathcal Q}_1=0$. We also compute $\partial_1{\mathcal H}_{\ell,1}=\frac{1}{8{\mathtt g}} (\xi^+-\xi^-)$. It is possible to choose the open set ${\mathcal V}_\ell$ in such a way that $\xi^+-\xi^->0$ for all $(\xi^+,\xi^-)\in {\mathcal V}_\ell$ provided that we assume the total water height $h$ never vanishes. Assumption \ref{assH2} is therefore satisfied. Recalling the definition \eqref{defSbprime} of ${\mathfrak S}'_b$, the system \eqref{CBODE} can be put under the form
\begin{align}
\nonumber
T
\begin{pmatrix} \dot{\underline{\xi}}_0^- \vspace{1mm}\\ \ddot{\underline{\xi}}_\ell^+ \end{pmatrix}
+
{\mathfrak S}'_b\widetilde{D}_2\begin{pmatrix}  0 \vspace{1mm}  \\ \dot{\xi}^+_\ell \end{pmatrix}
&+
\begin{pmatrix}
0 \vspace{1mm}\\
{\mathcal Q}_2(\dot{\underline{\xi}}_\ell^+,\dot{g}_\ell)
\end{pmatrix}
+
\widetilde{V}_{\rm bdry}\big( {\mathcal H}_0(g_0,\underline{\xi}_0^-),{\mathcal H}_\ell(\underline{\xi}_\ell^+,g_\ell)\big)   \\
\label{CBODE2}
& =\widetilde{V}_{\rm int}[\zeta,q]
 -
 {D}_1\begin{pmatrix}  \ddot{g}_0 \\ \ddot{g}_\ell \end{pmatrix}
-
{\mathfrak S}'_b{D}_2\begin{pmatrix}  \dot{g}_0 \\ \dot{g}_\ell \end{pmatrix},
\end{align}
where $T$ is the triangular matrix
$$
T=
\begin{pmatrix}
({\mathfrak s}^{b,0})'(0) & 0 \\ ({\mathfrak s}^{b,0})'(\ell) & \partial_1{\mathcal H}_{\ell,1}(\underline{\xi}_\ell^+,g_\ell)
\end{pmatrix} .
$$
Under the assumption on $b$ made in the statement of Proposition \ref{propS'}, one has $({\mathfrak s}^{b,0})'(0)\neq 0$ and $T$ is invertible, and the system \eqref{CBODE2} therefore furnishes an ODE in explicit form of first order in $\underline{\xi}_0^-$ and second order on $\underline{\xi}_\ell^+$. {\it For all the applications considered in this paper, we will always have ODEs of second order, except when the boundary condition is imposed on the surface elevation (as in this example at $x=0$). }
\end{example}

We can notice that if $\xi_0^+(\zeta,q)=q$ (boundary condition on the discharge $q$ at $x=0$), and if we choose $\xi_0^-(\zeta,q)=\zeta$ as output function, then ${\mathcal H}_{0}(\xi^+,\xi^-)=(\xi^-,\xi^+)$ and in particular $\partial_1{\mathcal H}_{0,1}\equiv 0$. We make the following assumption to ensure that if we make another choice of input function, then  $\partial_1{\mathcal H}_{0,1}$ does not vanish.
\begin{assumption}\label{assH3}
Under Assumption \ref{assH} and with the same notations, we assume that:\\
{\bf i.} If $\partial_1{\mathcal H}_{0,1}$ vanishes on ${\mathcal V}_0$ then $\xi_0^+(\zeta,q)=\xi_0^+(q)$ on ${\mathcal U}$. \\
{\bf ii.} If $\partial_2{\mathcal H}_{\ell,1}$ vanishes on ${\mathcal V}_\ell$ then $\xi_\ell^-(\zeta,q)=\xi_\ell^-(q)$ on ${\mathcal U}$.
\end{assumption}

\subsection{Compatibility conditions and initial conditions for the output functions}
\label{sectcompat}

 If the initial condition $(\zeta,q)_{\vert_{t=0}}=(\zeta^{\rm in},q^{\rm in})$ is imposed for the Boussinesq-Abbott system, then a necessary compatibility condition with the boundary conditions~\eqref{eq:generic-bnd} to allow solutions that are continuous at $(t=0,x=0,\ell)$ is that
 \begin{equation}\label{compB}
 \xi_0^+(\zeta^{\rm in}(0),q^{\rm in}(0))=g_0(0)
 \quad\mbox{ and }\quad
  \xi_\ell^-(\zeta^{\rm in}(\ell),q^{\rm in}(\ell))=g_\ell(0);
 \end{equation}
 these conditions generalize the first condition of \eqref{CC}.
 In order to solve \eqref{CBODE}, one also has to prescribe initial data on $\underline{\xi}^-_0$ and $\underline{\xi}_\ell^+$; one naturally takes
\begin{equation}\label{CIODE1}
\underline{\xi}^-_0(0)=\xi^-_0(\zeta^{\rm in}(0),q^{\rm in}(0))
\quad \mbox{ and }\quad 
\underline{\xi}^-_\ell(0)=\xi^+_\ell(\zeta^{\rm in}(\ell),q^{\rm in}(\ell)).
\end{equation}
By definition of ${\mathcal H}_0$ and ${\mathcal H}_\ell$, we know that
\begin{align*}
\dot{\zeta}_0&=\partial_1{\mathcal H}_{0,1}(g_0,\underline{\xi}_0^-)\dot{g}_0+\partial_2{\mathcal H}_{0,1}(g_0,\underline{\xi}_0^-)\dot{\underline{\xi}}_0^-,\\
\dot{\zeta}_\ell&=\partial_1{\mathcal H}_{\ell,1}(\underline{\xi}_\ell^+,g_\ell)\dot{\underline{\xi}}_\ell^++\partial_2{\mathcal H}_{\ell,1}(\underline{\xi}_\ell^+,g_\ell)\dot{g}_\ell.
\end{align*}
Using the first equation of the Boussinesq-Abbott system \eqref{eq:BP}, we can replace $\dot{\zeta}_0(t)$ by $-\partial_x q(t,0)$ (and proceed similarly at $x=\ell$) to obtain
\begin{equation}\label{CIODE2}
\begin{cases}
\partial_2{\mathcal H}_{0,1} (g_0(0),\underline{\xi}_0^-(0))  \, \dot{\underline{\xi}}_0^-&=  -\partial_x q^{\rm in}(0)-\partial_1{\mathcal H}_{0,1}(g_0(0),\underline{\xi}_0^-(0))\dot{g}_0(0),\\
\partial_1{\mathcal H}_{\ell,1} (\underline{\xi}_\ell^+(0),g_\ell(0)) \, \dot{\underline{\xi}}_\ell^+&= -\partial_x q^{\rm in}(\ell)-\partial_2{\mathcal H}_{\ell,1}(\underline{\xi}_\ell^+(0),g_\ell(0))\dot{g}_\ell(0),
\end{cases}
\end{equation}
where we assumed that the solution was regular enough to take the trace at $(t=0,x=0,\ell)$.
Depending on the situation, these conditions can be either a compatibility condition in the boundary and initial data to allow the possibility of regular solutions, or an initial condition for $\dot{\underline{\xi}}_0^-(0) $ or $\dot{\underline{\xi}}_\ell^+(0)$. More precisely:
\begin{itemize}[leftmargin=1.75em]
\item If  \eqref{CBODE} is of second order on $\underline{\xi}^-_0$ (resp. on $\underline{\xi}_\ell^+$), then an initial condition is also needed on  $\dot{\underline{\xi}}^-_0$ (resp. on $\dot{\underline{\xi}}_\ell^+$). 
Remarking further that under Assumption~\ref{assH2}, $\partial_2{\mathcal H}_{0,1}$ does not vanish if \eqref{CBODE} is of second order in $\underline{\xi}_0^-$ (resp. $\partial_1{\mathcal H}_{\ell,1}$ does not vanish if \eqref{CBODE} is of second order in $\underline{\xi}_\ell^+$), 
\eqref{CIODE2} furnishes the needed initial data for $\dot{\underline{\xi}}^-_0$ (resp. on $\dot{\underline{\xi}}_\ell^+$).
\item If  \eqref{CBODE} is of first order on $\underline{\xi}^-_0$ (resp. on $\underline{\xi}_\ell^+$) then by Assumption \ref{assH2} one has $\partial_2{\mathcal H}_{0,1}\equiv 0$ (resp. $\partial_1{\mathcal H}_{\ell,1}\equiv 0$), and \eqref{CIODE2} is then a compatibility condition on the data that generalizes the second equations in the compatibility conditions~\eqref{CC} imposed in Proposition \ref{propWPBP}.
\end{itemize}
\begin{example}
For the configuration considered in Examples \ref{ex:xi-H} and \ref{ex:xi-H2}, and assuming that \eqref{compB} holds, the ODE is of first order on  $\underline{\xi}^-_0$ and of second order in $\underline{\xi}_\ell^+$. The initial conditions we need to impose are therefore \eqref{CIODE1} and also, at $x=\ell$, the second condition of \eqref{CIODE2}. In order to expect $C^1$ solutions at the origin, we also need to impose at $x=0$ a compatibility condition on the initial and boundary data, which is given by the first equation of \eqref{CIODE1} (and which coincides in this case with the compatibility condition \eqref{CC}, namely, $-\partial_x q^{\rm in}(0)=\dot{g}_0$).
\end{example}

\subsection{Well-posedness of the initial boundary value problem with general boundary conditions}
\label{sectWP}

We can now state the main result of this paper, which proves the well-posedness of the Boussinesq-Abbott system \eqref{eq:BP} with general boundary conditions  \eqref{eq:generic-bnd}. Proposition \ref{propWPBP} is a particular case of the theorem, corresponding to $\xi_0^+(\zeta,q)=\xi_\ell^-(\zeta,q)=\zeta$.
\begin{theorem}\label{theoWP}
Assume that $b$ is a smooth function satisfying~\eqref{condhb} and~\eqref{condhb2}. Let also $\xi_0^+$ and $\xi_\ell^-$ be two input functions, $\xi_0^-$ and $\xi_\ell^+$ be two output functions, and ${\mathcal H}_0$ and ${\mathcal H}_\ell$ be reconstruction mappings that satisfy Assumptions~\ref{assH},~\ref{assH2} and~\ref{assH3}.\\
Let $g_0,g_\ell \in C^\infty({\mathbb R}^+)$. Let also $n\in {\mathbb N}\backslash\{0\}$ and $(\zeta^{\rm in},q^{\rm in})\in H^{n}(0,\ell)\times H^{n+1}(0,\ell)$ be such that $\inf_{[0,\ell]} (h_0-b+\zeta^{\rm in})>0$ and  assume that the compatibility condition~\eqref{compB} holds.
If moreover $\partial_2{\mathcal H}_{0,1}\equiv 0$ (resp. $\partial_1{\mathcal H}_{\ell,1}\equiv 0$) then we also assume that the first (resp. the second) compatibility condition of~\eqref{CIODE2} holds.\\
Then there exists a maximal existence time $T^*>0$ and a unique solution $(\zeta,q)\in C^\infty([0,T^*); H^{n}(0,\ell)\times H^{n+1}(0,\ell))$ to  the Boussinesq-Abbott system~\eqref{eq:BP}  with initial conditions~\eqref{ICeqb} and boundary conditions~\eqref{eq:generic-bnd}, and moreover for all $t\in (0,T^*)$, one has $\inf_{[0,\ell]} (h_0-b+\zeta(t,\cdot))>0$.
\end{theorem}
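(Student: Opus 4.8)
The plan is to mirror exactly the proof of Proposition~\ref{propWPBP}, of which this theorem is the natural generalization (the choice $\xi_0^+=\xi_\ell^-=\zeta$ recovers it): I recast the initial boundary value problem as a pure initial value problem on a Banach space, solve it by the Cauchy--Lipschitz theorem, and then recover the original problem through an equivalence step in the spirit of the reverse implication of Proposition~\ref{propreformgen}. The interior dynamics is that of the well-balanced reformulation~\eqref{eq:BP-reformulated2}, but the boundary derivatives $\dot q_0$ and $\dot q_\ell$ appearing in front of ${\mathfrak s}^{(b,0)}$ and ${\mathfrak s}^{(b,\ell)}$ are no longer free unknowns: they are expressed through the reconstruction mappings by~\eqref{dotq}, while the outputs $\underline\xi_0^-$ and $\underline\xi_\ell^+$ evolve according to the system~\eqref{CBODE}.

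First I would fix the state vector and ambient space. Since, by Remark~\ref{remvan} and Example~\ref{ex:xi-H2}, the order of~\eqref{CBODE} in each output is either one or two, the finite-dimensional part of the state consists of $\underline\xi_0^-$ and $\underline\xi_\ell^+$ together with whichever of $\dot{\underline\xi}_0^-,\dot{\underline\xi}_\ell^+$ corresponds to a second-order component; accordingly I set ${\mathbb X}=H^{n}(0,\ell)\times H^{n+1}(0,\ell)\times{\mathbb R}^k$ with $k\in\{2,3,4\}$. I would then define $\Omega\subset{\mathbb X}$ as the open set on which $\inf_{(0,\ell)}(h_0-b+\zeta)>0$ and on which the reconstructed boundary states ${\mathcal H}_0(g_0,\underline\xi_0^-)$ and ${\mathcal H}_\ell(\underline\xi_\ell^+,g_\ell)$ lie in ${\mathcal U}$ with arguments in ${\mathcal V}_0,{\mathcal V}_\ell$. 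On $\Omega$ the reformulated system reads $\frac{{\rm d}}{{\rm d}t}{\mathbb U}={\mathbb F}[t,{\mathbb U}]$, the $(\zeta,q)$-components being read off~\eqref{eq:BP-reformulated2} after substitution of~\eqref{dotq}, and the ${\mathbb R}^k$-components from the explicit form of~\eqref{CBODE}.

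The routine part is to check that ${\mathbb F}:{\mathbb R}^+\times\Omega\to{\mathbb X}$ is well-defined, continuous and locally Lipschitz in ${\mathbb U}$. As in Proposition~\ref{propWPBP}, this rests on the fact that $R^0_b$ and $R^1_b$ map $H^{n}(0,\ell)$ into $H^{n+2}(0,\ell)$, so that the nonlocal interior terms and their traces $\widetilde V_{\rm int}[\zeta,q]$ lose no regularity, combined with product and composition estimates for the smooth nonlinearities $\widetilde f_{\rm NSW}$ and $\widetilde B$ on the region where $h$ stays positive. The new ingredients are the smooth reconstruction mappings ${\mathcal H}_0,{\mathcal H}_\ell$, which enter through~\eqref{dotq} and through $\widetilde V_{\rm bdry}$, and the matrix multiplying the highest-order derivatives in~\eqref{CBODE}: Assumptions~\ref{assH2} and~\ref{assH3}, together with the invertibility of ${\mathfrak S}_b'$ and the non-vanishing of $({\mathfrak s}^{(b,0)})'(0)$ and $({\mathfrak s}^{(b,\ell)})'(\ell)$ granted by Proposition~\ref{propS'}, are precisely what make this matrix invertible with smooth inverse (the triangular matrix $T$ of Example~\ref{ex:xi-H2} being the representative case), so that~\eqref{CBODE} can be solved for the leading derivatives. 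Cauchy--Lipschitz then yields a unique maximal ${\mathbb U}\in C^1([0,T^*),\Omega)$, upgraded to $C^\infty$ by the same bootstrap as in Proposition~\ref{propWPBP}.

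The main obstacle is the final equivalence step: one must show that this solution actually solves~\eqref{eq:BP} with the boundary conditions~\eqref{eq:generic-bnd}, and that the outputs coincide with the genuine traces. Tracing the second equation of~\eqref{eq:BP-reformulated2} at $x=0,\ell$ and using that $R^0_b\widetilde B$ and $(h_bS)^*(R^1_b\,\cdot)$ vanish at the endpoints gives $\frac{{\rm d}}{{\rm d}t}q(t,0)=\dot q_0(t)=\frac{{\rm d}}{{\rm d}t}{\mathcal H}_{0,2}(g_0,\underline\xi_0^-)$ and likewise at $\ell$; matching the initial values via~\eqref{compB}, \eqref{CIODE1} and the reconstruction identity~\eqref{eq:H-reconstruction} then yields $q(t,0)={\mathcal H}_{0,2}(g_0,\underline\xi_0^-)$ and $q(t,\ell)={\mathcal H}_{\ell,2}(\underline\xi_\ell^+,g_\ell)$ for all $t$. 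Next, applying $\partial_x$ to the same equation and tracing shows that the true traces $\big(\zeta(t,0),q(t,0),\zeta(t,\ell),q(t,\ell)\big)$ satisfy the general identity~\eqref{eq:BP-boundarylayer3}; since~\eqref{CBODE} is precisely~\eqref{eq:BP-boundarylayer3} rewritten through the reconstruction, the reconstructed quantities ${\mathcal H}_{0,1}(g_0,\underline\xi_0^-)$ and ${\mathcal H}_{\ell,1}(\underline\xi_\ell^+,g_\ell)$ satisfy it too, with the same $\widetilde V_{\rm int}[\zeta,q]$ and the same (already matched) $q$-traces. The difference of the $\zeta$-traces therefore solves a homogeneous second-order ODE, and the delicate point is the matching of its Cauchy data: the values agree by~\eqref{compB} and~\eqref{eq:H-reconstruction}, while the first derivatives agree because, in a second-order component, $\dot{\underline\xi}_0^-(0)$ or $\dot{\underline\xi}_\ell^+(0)$ is fixed by~\eqref{CIODE2} so as to reproduce $-\partial_x q^{\rm in}$, whereas in a first-order component the vanishing of $\partial_2{\mathcal H}_{0,1}$ or $\partial_1{\mathcal H}_{\ell,1}$ turns~\eqref{CIODE2} into the compatibility condition assumed in the hypotheses. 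Uniqueness then forces $\zeta(t,0)={\mathcal H}_{0,1}(g_0,\underline\xi_0^-)$ and $\zeta(t,\ell)={\mathcal H}_{\ell,1}(\underline\xi_\ell^+,g_\ell)$; applying the input functions and using~\eqref{eq:H-reconstruction} with the invertibility of the reconstruction mappings gives~\eqref{eq:generic-bnd} and identifies the outputs with the traces, and the persistence $\inf_{(0,\ell)}(h_0-b+\zeta(t,\cdot))>0$ is inherited from ${\mathbb U}$ remaining in $\Omega$.
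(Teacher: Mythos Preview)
Your proposal is correct and follows essentially the same strategy as the paper: recast the problem as an ODE on a Banach space via the reformulation~\eqref{systref2}--\eqref{CBODE}, apply Cauchy--Lipschitz, and then verify the boundary conditions by an ODE-uniqueness argument on the traces. The one noteworthy variation is in the final equivalence step: the paper compares the \emph{input} value $\underline\xi_0^+:=\xi_0^+(\zeta_0,q_0)$ to $g_0$ via a second-order ODE in $\underline\xi_0^+$ (which is why Assumption~\ref{assH3} and the case distinction $\xi_0^+=\xi_0^+(q)$ appear there), whereas you compare $\zeta(t,0)$ to ${\mathcal H}_{0,1}(g_0,\underline\xi_0^-)$ directly through~\eqref{eq:BP-boundarylayer3}, which is always second order in $\zeta_0$ and so avoids that case split; your route then needs that $\xi_0\circ{\mathcal H}_0=\mathrm{id}$ locally (which follows from Assumption~\ref{assH} and the inverse function theorem, since both maps are between open sets of ${\mathbb R}^2$), a point you pass over with ``invertibility of the reconstruction mappings'' but which is harmless.
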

\begin{proof}
From the analysis of \S \ref{secteqout} and with the same notations, we know that if such a solution exists, then $(\zeta,q)$  solves the system~\eqref{eq:BP-reformulated2}. Substituting for $\dot{q}_0$ and $\dot{q}_\ell$ in the right-hand side of~\eqref{eq:BP-reformulated2} using \eqref{dotq}, we obtain that 
\begin{equation}\label{systref2}
\begin{cases}
\partial_t \zeta+\partial_x q=0,\\
\partial_t q + \sqrt{3}\frac{h_b}{\alpha_b }(h_b S)^* (R^1_b (\frac{1}{h_b^2} \widetilde{f}_{\rm NSW}))
= R^0_b \widetilde{B} \\
\hspace{1.5cm}+ \nabla {\mathcal H}_{0,2}(g_0,\underline{\xi}^-_0)\cdot \begin{pmatrix} \dot{g}_0 \\ \dot{\underline{\xi}}_0^- \end{pmatrix}
{\mathfrak s}^{(b,0)}+\nabla {\mathcal H}_{\ell,2}(\underline{\xi}_\ell^+,g_\ell)\cdot \begin{pmatrix} \dot{\underline{\xi}}_\ell^+ \\ \dot{g}_\ell \end{pmatrix}
{\mathfrak s}^{(b,\ell)},
\end{cases}
\end{equation}
This system is complemented by the ODE \eqref{CBODE} for $(\underline{\xi}_0^-,\underline{\xi}_\ell^+)$ which can be of first or second order in  $\underline{\xi}_0^-$ or $\underline{\xi}_\ell^+$. We now distinguish several cases and, for the sake of clarity, focus our attention at $x=0$, the adaptations to $x=\ell$ being straightforward.
\begin{itemize}[leftmargin=1.75em]
\item If $\partial_2{\mathcal H}_{0,1}\equiv 0$, then by Assumption \ref{assH2} we know that $\partial_2{\mathcal H}_{0,2}$ does not vanish and the ODE \eqref{CBODE} is explicit and of  first order in $\underline{\xi}_0^-$. The needed initial condition on  $\underline{\xi}_0^-$ is furnished by~\eqref{CIODE1}.
\item If this is not the case then $\partial_2{\mathcal H}_{0,1}$ does not vanish in virtue of Assumption~\ref{assH2}, and the ODE \eqref{CBODE} is explicit and of second order in $\underline{\xi}_0^-$. In addition to the  initial condition on  $\underline{\xi}_0^-$ furnished by~\eqref{CIODE1}, we need an initial condition on  $\dot{\underline{\xi}}_0^-$, which is furnished by \eqref{CIODE2}.
\end{itemize}
In all cases, we can use as in the proof of Proposition \ref{propWPBP} the Cauchy-Lipschitz theorem to construct a solution $(\zeta,q,\underline{\xi}_0^-,\underline{\xi}_\ell^+)$ to the initial value problem formed by~\eqref{systref2},~\eqref{CBODE} and the initial conditions. We need to prove now that $(\zeta,q)$ solves the Boussinesq-Abbott system~\eqref{eq:BP}  with initial conditions \eqref{ICeqb} and boundary conditions \eqref{eq:generic-bnd}. \\
Applying $(1+h_b{\mathcal T}_b)$ to~\eqref{systref2}, we obtain that $(\zeta,q)$ solves~\eqref{eq:BP}; it remains to prove that the boundary conditions~\eqref{eq:generic-bnd} are satisfied. 
Taking the trace of the second equation of~\eqref{systref2} at $x=0$ and $x=\ell$, we deduce from~\eqref{defR0b},~\eqref{defs0lb} and \eqref{defR1b} that
$$
\frac{{\rm d}}{{\rm d}t }q(t,0)=\frac{{\rm d}}{{\rm d}t } {\mathcal H}_{0,2}(g_0,\underline{\xi}^-_0)
\quad\mbox{ and }\quad
\frac{{\rm d}}{{\rm d}t }q(t,\ell)=\frac{{\rm d}}{{\rm d}t } {\mathcal H}_{\ell,2}(\underline{\xi}^+_\ell,g_\ell).
$$
Since we also know from Assumption \ref{assH} and \eqref{CIODE1} that $ {\mathcal H}_{0,2}(g_0,\underline{\xi}^-_0)_{\vert_{t=0}}=q^{\rm in}(0)$ and $ {\mathcal H}_{\ell,2}(\underline{\xi}^+_\ell,g_\ell)_{\vert_{t=0}}=q^{\rm in}(\ell)$, it follows that for all time, one has 
$q(t,0)= {\mathcal H}_{0,2}(g_0,\underline{\xi}^-_0)$ and $q(t,\ell)={\mathcal H}_{\ell,2}(\underline{\xi}^+_\ell,g_\ell)$. We here again have to distinguish two cases, and as above we  focus our attention at $x=0$.
\begin{itemize}[leftmargin=1.75em]
\item If $\xi_0^+(\zeta,q)=q$ then one has ${\mathcal H}_{0,2}(\xi^+,\xi^-)=\xi^+$  so that we deduce that $q(t,0)=g_0$, which is exactly the boundary condition  \eqref{eq:generic-bnd}.
\item If this is not the case, then we know by Assumption \ref{assH3} that $\partial_1{\mathcal H}_{0,1}(\xi^+,\xi^-)$ does not vanish on ${\mathcal V}_0$; therefore the ODE
\begin{align*}
\nonumber
\widetilde{D}_1\begin{pmatrix}  \ddot{\underline{\xi}}^-_0 \\ \ddot{\underline{\xi}}^+_\ell \end{pmatrix}
+
{\mathfrak S}'_b\widetilde{D}_2\begin{pmatrix}  \dot{\underline{\xi}}^-_0 \\ \dot{\underline{\xi}}^+_\ell \end{pmatrix}
&+
\begin{pmatrix}
{\mathcal Q}_1(\dot{\underline{\xi}}^+_0,\dot{\underline{\xi}}_0^-) \\
{\mathcal Q}_2(\dot{\underline{\xi}}_\ell^+,\dot{\underline{\xi}}^-_\ell)
\end{pmatrix}
+
\widetilde{V}_{\rm bdry}( {\mathcal H}_0(\underline{\xi}^+_0,\underline{\xi}_0^-),{\mathcal H}_\ell(\underline{\xi}_\ell^+,\underline{\xi}^+_\ell))  \\
& =\widetilde{V}_{\rm int}[\zeta,q]
 -
 {D}_1\begin{pmatrix}  \ddot{\underline{\xi}}^+_0 \\ \ddot{\underline{\xi}}^-_\ell \end{pmatrix}
-
{\mathfrak S}'_b{D}_2\begin{pmatrix}  \dot{\underline{\xi}}^+_0 \\ \dot{\underline{\xi}}^-_\ell \end{pmatrix}.
\end{align*}
which is obtained as  \eqref{CBODE}, is of second order in $\underline{\xi}_0^+=\xi^+_0(\zeta_0,q_0)$. It follows that $(\underline{\xi}_0^+,\underline{\xi}_\ell^-)$ satisfy the same ODE as $(g_0,g_\ell)$. These two quantities are therefore equal (which implies that the boundary condition \eqref{eq:generic-bnd} is satisfied) if $(\underline{\xi}_0^+(0),\underline{\xi}_\ell^-(0))=(g_0(0),g_\ell(0))$ and $(\dot{\underline{\xi}}_0^+(0),\dot{\underline{\xi}}_\ell^-(0))=(\dot{g}_0(0),\dot{g}_\ell(0))$. The first of these two conditions corresponds to \eqref{compB}. For the second one (focusing our attention on the case $x=0$), we observe by time differentiating the relation ${\mathcal H}_{0,1}(\underline{\xi}_0^+,\underline{\xi}_0^-)=\zeta(t,0)$ and using the first equation of \eqref{eq:BP} that
$$
\partial_1{\mathcal H}_{0,1}(\underline{\xi}_0^+(0),\underline{\xi}_0^-(0))\dot{\underline{\xi}}_0^+(0)+\partial_2{\mathcal H}_{0,1}(\underline{\xi}_0^+(0),\underline{\xi}_0^-(0))\dot{\underline{\xi}}_0^-(0)=-\partial_x q^{\rm in}(0).
$$
Since $\underline{\xi}_0^+(0)=g(0)$ by \eqref{compB}, we can use \eqref{CIODE2} (which holds by assumption if $\partial_2{\mathcal H}_{0,1}\equiv 0$ and by the initial condition imposed on $\dot{\underline{\xi}}_0^-$ otherwise) to obtain
$$
\partial_1{\mathcal H}_{0,1}(\underline{\xi}_0^+(0),\underline{\xi}_0^-(0)) \big(\dot{\underline{\xi}}_0^+(0)-\dot{g}_0(0)\big)=0,
$$
and since $\partial_1{\mathcal H}_{0,1}$ does not vanish, this implies that $\dot{\underline{\xi}}_0^+(0)=\dot{g}_0(0)$, so that the boundary condition  \eqref{eq:generic-bnd} is satisfied.
\end{itemize}
The proof of the theorem is then complete.
\end{proof}

\subsection{Asymptotic stability}
\label{sectAS}

For some applications, we have some information on $\zeta$ and $q$ at $x=0$ and $x=\ell$; this provides us with boundary conditions of the form \eqref{eq:generic-bnd} on some input functions $\xi_0^+$ and $\xi_\ell^-$.
 For instance, we may know the surface elevation $\zeta(t,0)$ and $\zeta(t,\ell)$ through buoys located at $x=0$ and $x=\ell$. By Theorem~\ref{theoWP} we are able to compute the solution $(\zeta,q)$ of the Boussinesq-Abbott equations~\eqref{eq:BP} on the full domain $(0,\ell)$ for some time interval $(0,T^*)$ with $T^*>0$ provided that we know the initial data $(\zeta^{\rm in},q^{\rm in})$. Unfortunately, for most applications, the initial data are not known (it is very complicated to measure the surface elevation and the horizontal discharge on the whole interval $(0,\ell)$). A question of high practical relevance is therefore the following: if we consider the solution $(\widetilde{\zeta},\widetilde{q})$ of the initial boundary value problem with the same boundary conditions but with initial data $ (\widetilde{\zeta}^{\rm in},\widetilde{q}^{\rm in})\neq (\zeta^{\rm in},q^{\rm in})$ do we have $(\widetilde{\zeta},\widetilde{q})\sim (\zeta,q)$ for large times? If this is the case, we shall say that the solution $(\zeta,q)$ of the initial boundary value problem 
formed by~\eqref{eq:BP}  with initial conditions~\eqref{ICeqb} and boundary conditions~\eqref{eq:generic-bnd} is \emph{asymptotically stable}.

Asymptotic stability cannot be expected in general, even if we consider only the linear equation. Consider for instance the linearized Boussinesq equations with a flat topography
\begin{equation}\label{BPlin}
\begin{cases}
\partial_t \zeta+\partial_x q=0,\\
(1-\frac{h_0^2}{3}\partial_x^2)\partial_t q+{\mathtt g}h_0\partial_x \zeta=0,
\end{cases}
\end{equation}
with boundary conditions
\begin{equation}\label{CBlin}
\zeta(t,0)=\zeta(t,\ell)=0.
\end{equation}
The rest state $(\zeta,q)=(0,0)$ is obviously a solution to this problem, associated with a homogeneous initial condition. This following proposition shows that the rest state is not asymptotically stable. In the statement, we use the notation
\begin{equation}\label{eq:linear-energy}
E(t)=\frac{1}{2}{\mathtt g}\vert \widetilde{\zeta}(t)\vert^2_{L^2(0,\ell)}+ \frac{1}{2} \frac{1}{h_0}\vert \widetilde{q}(t)\vert^2_{L^2(0,\ell)}+\frac{h_0}{6} \vert \partial_x \widetilde{q}(t)\vert^2_{L^2(0,\ell)}
\end{equation}
\begin{proposition}\label{prop:linear-asymptotic-stability}
If $(\widetilde{\zeta},\widetilde{q})$ is a smooth solution to \eqref{BPlin} and \eqref{CBlin} then for all times, one has 
$
E(t)=E(0)
$.
In particular, if $(\widetilde{\zeta},\widetilde{q})_{\vert_{t=0}}\neq (0,0)$ then one cannot have $(\widetilde{\zeta},\widetilde{q})\to (0,0)$ in $L^2(0,\ell)\times H^1(0,\ell)$.
\end{proposition}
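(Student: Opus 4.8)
The plan is to prove the stronger statement that $E$ is exactly conserved, which immediately gives both conclusions. Writing $(\zeta,q)$ for $(\widetilde\zeta,\widetilde q)$, I would differentiate \eqref{eq:linear-energy} in time,
$$
\frac{{\rm d}}{{\rm d}t}E={\mathtt g}\int_0^\ell \zeta\,\partial_t\zeta+\frac{1}{h_0}\int_0^\ell q\,\partial_t q+\frac{h_0}{3}\int_0^\ell \partial_x q\,\partial_x\partial_t q,
$$
and then eliminate the time derivatives using \eqref{BPlin}. The term ${\mathtt g}\int \zeta\,\partial_t\zeta$ becomes $-{\mathtt g}\int \zeta\,\partial_x q$ by the continuity equation. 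For the two $q$-terms, the natural move is to recognize them, after one integration by parts, as the $L^2$ pairing of $q/h_0$ with $(1-\frac{h_0^2}{3}\partial_x^2)\partial_t q$ plus a single boundary contribution; using the second equation of \eqref{BPlin} this pairing equals $-{\mathtt g}\int q\,\partial_x\zeta$, at the cost of the boundary term $\frac{h_0}{3}\big[q\,\partial_x\partial_t q\big]_0^\ell$ produced by the integration by parts.

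Collecting everything, the two interior integrals combine as $-{\mathtt g}\int_0^\ell\partial_x(\zeta q)=-{\mathtt g}\big[\zeta q\big]_0^\ell$, so that
$$
\frac{{\rm d}}{{\rm d}t}E=-{\mathtt g}\big[\zeta q\big]_0^\ell+\frac{h_0}{3}\big[q\,\partial_x\partial_t q\big]_0^\ell.
$$
The first bracket vanishes immediately because $\zeta(t,0)=\zeta(t,\ell)=0$. The crux of the argument, and the step I expect to be the only subtle one, is that the second, dispersive boundary term also vanishes: this does \emph{not} follow from the Dirichlet condition on $\zeta$ alone, and one must use the continuity equation again at the endpoints. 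Since $\zeta(t,0)\equiv0$ in time, differentiating in $t$ and invoking $\partial_t\zeta=-\partial_x q$ gives $\partial_x q(t,0)=0$ for all $t$ (and likewise at $x=\ell$). Differentiating this identity once more in time yields $\partial_x\partial_t q(t,0)=\partial_x\partial_t q(t,\ell)=0$, which kills the bracket. Hence $\frac{{\rm d}}{{\rm d}t}E\equiv0$ and $E(t)=E(0)$.

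For the final assertion, I would simply note that $E$ is coercive: from \eqref{eq:linear-energy} one has $E(t)\geq \frac{1}{2}{\mathtt g}\vert\zeta(t)\vert_{L^2(0,\ell)}^2$ and $E(t)\geq c\vert q(t)\vert_{H^1(0,\ell)}^2$ for some $c>0$ depending only on $h_0$. If $(\zeta,q)_{\vert t=0}\neq(0,0)$ then $E(0)>0$, so the conserved value $E(t)=E(0)$ stays bounded away from $0$; consequently $\vert\zeta(t)\vert_{L^2(0,\ell)}^2+\vert q(t)\vert_{H^1(0,\ell)}^2$ cannot tend to $0$, which rules out convergence of $(\zeta,q)$ to $(0,0)$ in $L^2(0,\ell)\times H^1(0,\ell)$.
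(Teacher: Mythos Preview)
Your proof is correct and follows essentially the same energy argument as the paper: multiply the equations by $({\mathtt g}\zeta,\frac{1}{h_0}q)$, integrate by parts, and use the boundary condition $\zeta=0$ together with the continuity equation to kill the dispersive boundary term $\frac{h_0}{3}[q\,\partial_x\partial_t q]_0^\ell$. The only cosmetic difference is that the paper writes $\partial_x\partial_t q=-\partial_t^2\zeta$ directly at the endpoints, whereas you first obtain $\partial_x q(t,0)=0$ and then differentiate in time; these are the same step.
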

\begin{proof}
Multiplying \eqref{BPlin} by $({\mathtt g}\widetilde{\zeta},\frac{1}{h_0}\widetilde{q})$ and integrating by parts and using the boundary conditions \eqref{CBlin}, we obtain
$$
\frac{{\rm d}}{{\rm d}t} E- \frac{h_0}{3}[\partial_x\partial_t \widetilde{q} \widetilde{q}]_0^\ell=0. 
$$
Using the first equation of \eqref{BPlin}, one has $\partial_x\partial_t \widetilde{q}=-\partial^2_t \widetilde{\zeta}$ so that using \eqref{CBlin} we deduce that $[\partial_x\partial_t \widetilde{q} \widetilde{q}]_0^\ell=0$, and the result follows easily.
\end{proof}

The fact that we allow in  \eqref{eq:generic-bnd} very general boundary conditions is important because it may allow us to use boundary conditions for which the solution furnished by Theorem \ref{theoWP} is asymptotically stable (provided that it is well defined globally in time of course). This question of asymptotic stability is a difficult open problem for the Boussinesq-Abbott equations. Even in the case of the much more widely studied nonlinear shallow water equations, very little is known. The most relevant result is the fact that the rest state is asymptotically stable if the input functions $\xi_0^+$ and $\xi_\ell^-$ at $x=0$ and $x=\ell$ are respectively the right and left-going Riemann invariants $R^+$ and $R^-$ \cite{BastinCoron}.\\
In \S \ref{sectnumAS}, we investigate numerically the asymptotic stability of the solution of the initial boundary value problem associated with various types of boundary conditions.

\section{Numerical schemes} \label{sectschemes}
In this section we detail the numerical approximation of the Boussinesq-Abbott system with topography \eqref{eq:BP}, namely, 
\begin{align*}
  \left\{
  \begin{array}{l}
    \partial_t \zeta + \partial_x q = 0 \\
    (1  + h_b{\mathcal T}_b)\partial_t q + \partial_x f_{\text{NSW}} = -{\mathtt g}h\partial_x b
  \end{array}\right.\quad \text{in}\ (0,\ell)\ ,
\end{align*}
with the general boundary conditions \eqref{eq:generic-bnd}, that is, with the notations of Section~\ref{subsec:generic-bnd}, 
\begin{align*}
  \xi^{+}_0(\zeta_0, q_0) = g_0,\qquad
  \xi^{-}_\ell(\zeta_\ell, q_\ell) = g_\ell,
\end{align*}
and with initial condition
$$
(\zeta,q)=(\zeta^{\rm in},q^{\rm in}) \quad \mbox{ at }\quad t=0.
$$
Under the assumptions of Theorem \ref{theoWP}, we know that there is a unique solution to this initial boundary-value problem. We show in this section how to approximate numerically this solution. We actually solve the reformulation \eqref{systref2} of the equations 
that reads
\begin{equation}\label{systeref3}
\begin{cases}
\partial_t \zeta+\partial_x q=0,\\
\partial_t q + \sqrt{3}\frac{h_b}{\alpha_b }(h_b S)^* (R^1_b (\frac{1}{h_b^2} \widetilde{f}_{\rm NSW}))
= R^0_b \widetilde{B} \\
\hspace{1.5cm}+ \nabla {\mathcal H}_{0,2}(g_0,\underline{\xi}^-_0)\cdot \begin{pmatrix} \dot{g}_0 \\ \dot{\underline{\xi}}_0^- \end{pmatrix}
{\mathfrak s}^{(b,0)}+\nabla {\mathcal H}_{\ell,2}(\underline{\xi}_\ell^+,g_\ell)\cdot \begin{pmatrix} \dot{\underline{\xi}}_\ell^+ \\ \dot{g}_\ell \end{pmatrix}
{\mathfrak s}^{(b,\ell)},
\end{cases}
\end{equation}
where $(\underline{\xi}_0^+,\underline{\xi}_\ell^-)$ solves the system of ODEs \eqref{CBODE} that we rewrite here for the sake of clarity,
\begin{align}
\nonumber
\widetilde{D}_1\begin{pmatrix}  \ddot{\underline{\xi}}^-_0 \\ \ddot{\underline{\xi}}^+_\ell \end{pmatrix}
+
{\mathfrak S}'_b\widetilde{D}_2\begin{pmatrix}  \dot{\underline{\xi}}^-_0 \\ \dot{\underline{\xi}}^+_\ell \end{pmatrix}
&+
\begin{pmatrix}
{\mathcal Q}_1(\dot{g}_0,\dot{\underline{\xi}}_0^-) \\
{\mathcal Q}_2(\dot{\underline{\xi}}_\ell^+,\dot{g}_\ell)
\end{pmatrix}
+
\widetilde{V}_{\rm bdry}\big( {\mathcal H}_0(g_0,\underline{\xi}_0^-),{\mathcal H}_\ell(\underline{\xi}_\ell^+,g_\ell)\big)  \\
\label{CBODE-recall}
& =\widetilde{V}_{\rm int}[\zeta,q]
 -
 {D}_1\begin{pmatrix}  \ddot{g}_0 \\ \ddot{g}_\ell \end{pmatrix}
-
{\mathfrak S}'_b{D}_2\begin{pmatrix}  \dot{g}_0 \\ \dot{g}_\ell \end{pmatrix}.
\end{align}
As explained in Section \ref{subsec:unify-models-wb}, this reformulation is adapted to well-balancedness in the sense that it enables to derive numerical schemes which naturally preserve the hydrostatic equilibrium $(\zeta,q) = (0,0)$.

Our strategy consists in an hybrid approach mixing a finite volumes scheme for the interior equations~\eqref{systeref3}, and a finite difference discretization of the nonlocal operators $R_b^0,R_b^1$ defined in~\eqref{defR0b}-\eqref{defR1b} and of the system of ODEs~\eqref{CBODE-recall} that relates the evolution of the output functions at the boundaries to the boundary data.
We explain in Section \ref{subsec:discr-op} how to discretize the various operators involved in~\eqref{systeref3}; we then propose in Section \ref{subsec:1st-order-discr} a first order Lax-Friedrichs scheme, and a second order MacCormack scheme in Section \ref{subsec:2nd-order-discr}.

\medbreak
\noindent{\bf Notations.}
We introduce a few notations used in the following lines. We consider the grid points $x_i = (i-1)\Delta x$ for all $1\leq i\leq N$ with $(N-1)\Delta x = \ell$. A dual mesh is then obtained as the $N$ cells centered on the $(x_i)_{1\leq i\leq N}$ and delimited by their interfaces $x_{i\pm1/2} = x_i\pm\Delta x/2$. The use of this dual mesh is not mandatory, but it will allow us to deal with the boundary conditions~\eqref{eq:generic-bnd} and the dispersive boundary layer directly in the border cells which are centered on $x_1 = 0$ and $x_N = \ell$. In particular, this rids us from the need to use interpolation formulas whenever a quantity has to be evaluated at the boundary of the domain. We denote the discrete times $t^n = n\Delta t$, and we consider $U_i^n = (\zeta_i^n, q_i^n)^T$ an approximation of $\frac{1}{\Delta x}\int_{x_i-1/2}^{x_i+1/2} (\zeta,q)^T(t^n,s)\mathrm ds$, the average of the solution in the cell $i$ at time $t^n$. A similar notation is adopted for the bathymetry $b$.

\subsection{Discrete operators}
\label{subsec:discr-op}

In order to discretize $(h_b S)$, its adjoint $(h_b S)^*$ and the nonlocal operators $R_b^0,R_b^1$, we shall make use of the second order finite difference operator $\delta_x:\mathbb R^N\rightarrow \mathbb R^N$ defined such that for any $v\in\mathbb R^N$ and $1\leq i\leq N$ we have
\begin{equation}\label{eq:deltax}
\delta_x (v)_i = \left\{\begin{array}{ll}
\frac{1}{2\Delta x}(v_{i+1} - v_{i-1}) & \text{if}\ 2\leq i\leq N-1, \\[.5em]
\frac{1}{2\Delta x}(-3v_1 + 4v_2 - v_3) & \text{if}\ i = 1, \\[.5em]
\frac{1}{2\Delta x}(v_{N-2} - 4v_{N-1} + 3v_N) & \text{if}\ i = N.
\end{array}\right. 
\end{equation}
For any index $1\leq i\leq N$, we define the quantities
\[
\alpha_{b,i} = 1 + \frac{1}{4}(\delta_x(b)_i)^2 , \quad h_{b,i} = h_0 - b_i ,
\]
and whenever considering the discrete setting, $\alpha_b$ and $h_b$ will from now on refer to the vectors with corresponding coefficients $(\alpha_{b,i})_i$ and $(h_{b,i})_i$. It is then possible to approximate $(h_b S)$ by $(\underline{h_b S})$ as follows:
\begin{equation}
\forall v\in\mathbb R^N,\quad
    (\underline{h_b S})(v)_i = -\frac{h_{b,i}^2}{\sqrt{3}} \delta_x\Big(\frac{v}{h_b}\Big)_i + \frac{\sqrt{3}}{2} \delta_x(b)_i v_i ,
\end{equation}
which is consistent with the definition $\eqref{defS}$ of $S(\cdot)$. In a similar fashion we introduce $(\underline{h_b S})^*$ defined as
\begin{align}\label{eq:hbS-star-discrete}
    (\underline{h_b S})^*(f)_i ={} & \frac{\alpha_{b,i}}{\sqrt{3}h_{b,i}} \delta_x \Big(\frac{h_b^2 f}{\alpha_b}\Big)_i + \Big(\frac{h_{b,i}}{\sqrt{3}}\frac{\delta_x(\alpha_b)_i}{\alpha_{b,i}} + \frac{\sqrt{3}}{2}\delta_x b_i\Big) f_i ,
\end{align}
and that is shown to be consistent with $(h_b S)^*$ up to a $O(\Delta x^2)$ error.

Next we discretize the nonlocal operator $R_b^0$ by $\underline R_b^0$ defined implicitly as
\begin{align}
    \underline R_b^0:f&\in\mathbb R^N\longmapsto v = \underline R_b^0f \in \mathbb R^N \text{ such that} \nonumber\\
    & \begin{cases}
    \displaystyle v_i - \delta_x\Big(\frac{h_b^3}{3}\delta_x\Big(\frac{v}{h_b}\Big)\Big)_i + \frac{h_{b,i}}{2\Delta x^2}(b_{i+1} - 2b_i + b_{i-1}) v_i = f_i , & 1 < i < N, \\
    v_1 = 0, \quad v_N = 0.
    \end{cases}  \label{eq:Rtheta0-interior}
\end{align}
and one readily checks that this is a consistent approximation of $(1 + h_b\mathcal T_b)$ at order $2$, with the operator $\mathcal T_b$ given in~\eqref{eq:operator-Tb}. Likewise $R_b^1$ is discretized by $\underline R_b^1$ defined implicitly as
\begin{align}
    \underline R_b^1:f&\in\mathbb R^N\longmapsto v = \underline R_b^1f \in \mathbb R^N \text{ such that} \nonumber\\
    & \begin{cases}
    \displaystyle v_i + \frac{1}{h_{b,i}}(\underline{h_b S})\Big(\frac{h_{b}}{\alpha_{b}}(\underline{h_b S})^*(v)\Big)_i = f_i , & 1 < i < N, \\
    (\underline{h_b S})^*(v)_1 = 0, \quad (\underline{h_b S})^*(v)_N = 0,
    \end{cases}  \label{eq:Rtheta1-interior}
\end{align}
which is consistent with~\eqref{defR1b}. The definitions~\eqref{eq:Rtheta0-interior}-\eqref{eq:Rtheta1-interior} are implicit since the practical computation of $\underline R_b^0$ and $\underline R_b^1$ applied to some vector will require the resolution of a linear system, and this will be the costliest step of the algorithm. However, these discrete operators do not evolve in time and it is thus possible to assemble them and perform their factorization in a prepossessing step, which greatly reduces the computational time.

\begin{remark}\label{rem:hbS-kernel}
Owing to the homogeneous conditions from~\eqref{eq:Rtheta1-interior} verified in the border cells, one has $(\underline{h_b S})^*(f)_1 = (\underline{h_b S})^*(f)_N = 0$ for any vector $f$ in $\mathrm{Im}(\underline R_b^1)$. This is the discrete counterpart to the fact that $(h_b S)^*(R_b^1\cdot)_{\vert_{x=0,\ell}}\equiv 0$ in $L^2(0,\ell)$, which enables to obtain the relation~\eqref{CBODE-recall} --- ensuing from Lemma~\ref{lem:relation-Rb1} after applying $\partial_x$ to the second equation of \eqref{systeref3} and taking the trace at $x=0,\ell$ --- and hereby to ensure that the boundary conditions~\eqref{eq:generic-bnd} are satisfied.
Therefore it is important to have this property at the discrete level to remain consistent with the continuous reformulation~\eqref{systeref3}-\eqref{CBODE-recall}.
\end{remark}

\subsection{First order Lax-Friedrichs scheme}
\label{subsec:1st-order-discr}

We begin by describing how the system~\eqref{systeref3} for the interior values is approximated. For the first equation, the free surface elevation is updated in cells $2\leq i\leq N-1$ using a finite volumes strategy
\begin{equation}\label{eq:zeta-O1-update}
    \frac{\zeta_i^{n+1} - \zeta_i^n}{\Delta t} + \frac{1}{\Delta x}\Big(q_{i+1/2}^n - q_{i-1/2}^n \Big) = 0 ,
\end{equation}
where $q_{i\pm1/2}^n$ is the Lax-Friedrichs numerical flux defined by
\begin{equation}
q_{i+1/2}^n = \frac{1}{2}(q_{i}^n + q_{i+1}^n) - \frac{\Delta x}{2\Delta t}(\zeta_{i+1}^n - \zeta_i^n) .
\end{equation}
The boundary values $\zeta_1^{n+1},\zeta_N^{n+1}$ will be deduced upon approximating the ODE for the output functions.
To approximate the second equation of~\eqref{systeref3} we first introduce the vectors $\widetilde f_{\mathrm{NSW}}^n, \widetilde B^n\in\mathbb R^N$ with coefficients
\begin{gather}\label{eq:discr-flux-source}
    \begin{cases}
    \displaystyle
    \widetilde f_{\mathrm{NSW},i}^n = \frac{(q_i^n)^2}{h_0 + \zeta_i^n - b_i} + \frac{\mathtt g}{2}((\zeta_i^n)^2 + 2h_{b,i}\zeta_i^n) , \\
    \displaystyle
    \widetilde B_i^n = -{\mathtt g} \zeta_i^n \delta_x(b)_i + \frac{3}{2}\frac{\delta_x(b)_i}{h_{b,i}}\widetilde{f}_{\mathrm{NSW},i}^n ,
    \end{cases}
\end{gather}
which is consistent with the definition~\eqref{defBtilde} of $\widetilde B(\zeta,q)$. The update of the discharge in cells $1\leq i\leq N$ makes use of the discrete operators introduced in Section~\ref{subsec:discr-op} as follows:
\begin{align}\label{eq:q-O1-update}
&
    \frac{q_i^{n+1} - q_i^n}{\Delta t} + \sqrt{3}\frac{h_{b,i}}{\alpha_{b,i}} (\underline{h_b S})^*\Big(\underline R_b^1\Big(\frac{1}{h_b^2}\widetilde f_{\mathrm{NSW}}^n\Big)\Big)_i ={} \\
    & \quad
    \underline{R}_b^0(\widetilde B^n)_i + \delta_t q_1^n \cdot \underline{\mathfrak s}_i^{(b,0)} + \delta_t q_N^n\cdot \underline{\mathfrak s}_i^{(b,\ell)} + \frac{\Delta x^2}{2\Delta t}\frac{q_{i+1}^n - 2q_i^n + q_{i-1}^n}{\Delta x^2}\mathds 1_{i\not\in\{1,N\}} , \nonumber
\end{align}
where it remains to specify how to compute $\delta_t q_1^n$ and $\delta_t q_N^n$, and where the vectors $\underline{\mathfrak s}^{(b,0)}$ and $\underline{\mathfrak s}^{(b,\ell)}$ are taken as the solutions $v\in\mathbb R^N$ of
\begin{gather}\label{eq:s-discr}
    v_i - \delta_x\Big(\frac{h_b^3}{3}\delta_x\Big(\frac{v}{h_b}\Big)\Big)_i + \frac{h_{b,i}}{2\Delta x^2}(b_{i+1} - 2b_i + b_{i-1}) v_i = 0 ,\quad 2\leq i\leq N-1
\end{gather}
with respective conditions
\begin{gather}\label{eq:bndcond-s}
    \begin{cases}
    \underline{\mathfrak s}_1^{(b,0)} = 1 \\
    \underline{\mathfrak s}_N^{(b,0)} = 0
    \end{cases} ,\qquad
    \begin{cases}
    \underline{\mathfrak s}_1^{(b,\ell)} = 0 \\
    \underline{\mathfrak s}_N^{(b,\ell)} = 1
    \end{cases} ,
\end{gather}
yielding a discrete counterpart of~\eqref{defs0lb}. Owing to the conditions~\eqref{eq:bndcond-s}, to the definition~\eqref{eq:Rtheta0-interior} of $\underline R_b^0$ and to Remark~\ref{rem:hbS-kernel},
the update~\eqref{eq:q-O1-update} in border cells $i = 1,N$ reduces to
\begin{equation}\label{eq:discrvar-qborder}
\frac{q_1^{n+1} - q_1^n}{\Delta t} = \delta_t q_1^n,\qquad
\frac{q_N^{n+1} - q_N^n}{\Delta t} = \delta_t q_N^n .
\end{equation}
\begin{remark}\label{rem:hbSstar-to-laxfriedrichs}
We also want to comment on the last term in the right hand side of~\eqref{eq:q-O1-update}, which is a first order numerical diffusion required for stability purposes. We can then show that the update~\eqref{eq:q-O1-update} amounts to a finite volumes method with Lax-Friedrichs nonlocal numerical flux and with source terms accounting for the bathymetry and the boundary layer. In fact, defining the nonlocal flux vector as
\begin{equation}\label{eq:nonlocal-flux-vector}
    \mathfrak f^n = \Big(\frac{h_{b,i}^2}{\alpha_{b,i}}\underline R_b^1\Big(\frac{1}{h_b^2}\widetilde f_{\mathrm{NSW}}^n\Big)_i\Big)_{1\leq i\leq N} ,
\end{equation}
and the associated Lax-Friedrichs flux
\begin{align}\label{eq:nonlocal-numflux}
    \mathfrak f_{i+1/2}^n = \frac{1}{2}(\mathfrak f_{i}^n + \mathfrak f_{i+1}^n) - \frac{\Delta x}{2\Delta t}(q_{i+1}^n - q_i^n) ,
\end{align}
we can show that
for $2\leq i\leq N-1$ the update~\eqref{eq:q-O1-update} equivalently rewrites
\begin{align}\label{eq:q-O1-FVupdate}
&
    \frac{q_i^{n+1} - q_i^n}{\Delta t} + \frac{\mathfrak f_{i+1/2}^n - \mathfrak f_{i-1/2}^n}{\Delta x} ={} \\
    & \quad
    \underline{R}_b^0(\widetilde B^n)_i - \Big(\frac{\delta_x(\alpha_b)_i}{\alpha_{b,i}} + \frac{3}{2}\frac{\delta_x (b)_i}{h_{b,i}}\Big) \mathfrak f_i^n
    + \delta_t q_1^n \cdot \underline{\mathfrak s}_i^{(b,0)} + \delta_t q_N^n\cdot \underline{\mathfrak s}_i^{(b,\ell)} . \nonumber
\end{align}
When the bottom is flat, the vectors $\delta_x(b)$ and $\widetilde B^n$ cancel, so that the discharge update~\eqref{eq:q-O1-FVupdate} is conservative up to the boundary cells,
which is coherent with the continuous model. Note also that when considering the scheme for the Boussinesq-Abbott model with flat topography ($b\equiv0$), we can directly use the expression~\eqref{eq:s-sinh} to compute  $\underline{\mathfrak s}^{(b,0)}$ and $\underline{\mathfrak s}^{(b,\ell)}$ instead of approximating them through~\eqref{eq:s-discr}-\eqref{eq:bndcond-s}. However in practice this does not affect the numerical results in a noticeable way.
\end{remark}

Finally we detail the handling of the border cells and the computation of $\delta_t q_1^n,\delta_t q_N^n$ in the framework of general boundary conditions discussed in Section~\ref{subsec:generic-bnd}. Assuming Assumption~\ref{assH} holds, the knowledge of $U_1^{n+1},U_N^{n+1}$ is deduced from the reconstruction formulas~\eqref{eq:H-reconstruction} reading as
\[
U_1^{n+1} = {\mathcal H}_0(\xi^+_0(U_1^{n+1}),\xi^-_0(U_1^{n+1}))
\quad\mbox{ and }\quad
U_N^{n+1} = {\mathcal H}_\ell(\xi^+_\ell(U_N^{n+1}),\xi^-_\ell(U_N^{n+1})).
\]
In the above, we set the input functions according to~\eqref{eq:generic-bnd} as $\xi_0^+(U_1^{n+1}) = g_0^{n+1}$ and $\xi_\ell^-(U_N^{n+1}) = g_\ell^{n+1}$. On the other hand, the output values $\xi_0^-(U_1^{n+1})$ and $\xi_\ell^+(U_N^{n+1})$ are obtained by discretizing the ODE~\eqref{CBODE-recall} for the traces as follows.
Given $X^n$ and $Y^n$ the respective approximations at time $t^n$ of the functions
\begin{equation}
    X:t \longmapsto \begin{pmatrix}
    \xi_0^-(U(t,0)) \\
    \xi_\ell^+(U(t,\ell))
    \end{pmatrix} \in\mathbb R^2,\qquad
    Y:t \longmapsto \dot X(t),
\end{equation}
we define the update $X^{n+1},Y^{n+1}$ as
\begin{align} \label{eq:update-Ynp1}
\begin{cases}
\cfrac{X^{n+1} - X^n}{\Delta t} = Y^{n+1} \\
\widetilde{D}_1^n\cfrac{Y^{n+1} - Y^{n}}{\Delta t} +
\underline{\mathfrak S}'_b\widetilde{D}_2^nY^{n+1} +
\begin{pmatrix}
{\mathcal Q}_1^n(\frac{g_0^{n+1} - g_0^{n-1}}{2\Delta t},Y_1^n) \\
{\mathcal Q}_2^n(Y_2^n,\frac{g_\ell^{n+1} - g_\ell^{n-1}}{2\Delta t})
\end{pmatrix} +  \widetilde{V}_{\rm bdry}^n \\
\hspace{6em} = \widetilde{V}_{\rm int}^n
 -
 {D}_1^n\begin{pmatrix}  \frac{g_0^{n+1} - 2g_0^n + g_0^{n-1}}{\Delta t^2} \\ \frac{g_\ell^{n+1} - 2g_\ell^n + g_\ell^{n-1}}{\Delta t^2} \end{pmatrix}
-
\underline{\mathfrak S}'_b{D}_2^n\begin{pmatrix} \frac{g_0^{n+1} - g_0^{n-1}}{2\Delta t} \\ \frac{g_\ell^{n+1} - g_\ell^{n-1}}{2\Delta t} \end{pmatrix}
\end{cases}
\end{align}
where the diagonal matrices $D_j^n, \widetilde D_j^n$ are defined for $j\in\{1,2\}$ as
\begin{gather*}
D_j^n=\begin{pmatrix} \partial_1{\mathcal H}_{0,j}(g_0^n,X_1^n) & 0 \\ 0 & \partial_2 {\mathcal H}_{\ell,j}(X_2^n,g_\ell^n) \end{pmatrix} ,\\
\widetilde{D}_j^n=\begin{pmatrix} \partial_2{\mathcal H}_{0,j}(g_0^n,X_1^n) & 0 \\ 0 & \partial_1 {\mathcal H}_{\ell,j}(X_2^n,g_\ell^n) \end{pmatrix},
\end{gather*}
where we used the quadratic forms
\begin{align*}
{\mathcal Q_1^n}(u_0,v_0)&= 
 \begin{pmatrix} u_0 \\ v_0 \end{pmatrix}\cdot {\rm Hess}_{{\mathcal H}_{0,1}}(g_0^n,X_1^n) \begin{pmatrix} u_0 \\ v_0\end{pmatrix},\\
 {\mathcal Q_2^n}(u_\ell,v_\ell)&= 
 \begin{pmatrix} u_\ell \\ v_\ell \end{pmatrix}\cdot {\rm Hess}_{{\mathcal H}_{\ell,1}}(X_2^n,g_\ell^n) \begin{pmatrix} u_\ell \\ v_\ell\end{pmatrix} ,
\end{align*}
and where
\begin{gather*}\label{defSbprime-discrete}
\underline{\mathfrak S}_b'=\begin{pmatrix}
\delta_x(\underline{\mathfrak s}^{(b,0)})_1  & \delta_x(\underline{\mathfrak s}^{(b,\ell)})_1 \\
\delta_x(\underline{\mathfrak s}^{(b,0)})_N  & \delta_x(\underline{\mathfrak s}^{(b,\ell)})_N
\end{pmatrix} ,\qquad
\widetilde{V}_{\rm bdry}^n =
\begin{pmatrix}
 \frac{3}{h_{b,1}^2}\widetilde{f}_{\rm NSW} ({\mathcal H}_0(g_0^n,X_1^n)) \\
\frac{3}{h_{b,N}^2}\widetilde{f}_{\rm NSW} ({\mathcal H}_\ell(X_2^n,g_\ell^n)) 
 \end{pmatrix} \\
  \widetilde V_{\rm int}^n=
\begin{pmatrix}
3(\underline R_b^1 (\frac{1}{h_b^2}\widetilde f_{\rm NSW}^n) )_{1}    - \big(\delta_x (\underline R^0_b \widetilde B^n) \big)_{1}  \\
3(\underline R_b^1 (\frac{1}{h_b^2}\widetilde f_{\rm NSW}^n) )_{N}   -\big( \delta_x (\underline R^0_b \widetilde B^n)\big)_{N}
 \end{pmatrix} .
\end{gather*}
The functions $g_0,g_\ell$ are defined over $t\in\mathbb R_+$, therefore the terms $g_0^{n-1},g_\ell^{n-1}$ found in~\eqref{eq:update-Ynp1} are not known when $n = 0$. We suggest to define them following the relations below:
\begin{gather*}
    \frac{g_0(\Delta t) - g_0^{-1}}{2\Delta t} = \frac{-3g_0(0) + 4g_0(\Delta t) - g_0(2\Delta t)}{2\Delta t} , \\
    \frac{g_\ell(\Delta t) - g_\ell^{-1}}{2\Delta t} = \frac{-3g_\ell(0) + 4g_\ell(\Delta t) - g_\ell(2\Delta t)}{2\Delta t}.
\end{gather*}
The sequence $(X^n)_n$ is initialized by taking $X^0 = (\xi_0^-(U_1^0), \xi_\ell^+(U_N^0))^T$. As in the continuous case (see the proof of Theorem \ref{theoWP}), when $\partial_2\mathcal H_{0,1}(g_0^0,X_1^0)\neq 0$ (respectively when $\partial_2\mathcal H_{0,1}(X_2^0,g_\ell^0)\neq 0$), an initial value $Y_1^0$ (respectively $Y_2^0$) must also be provided in order to compute the second line of~\eqref{eq:update-Ynp1} for $n=0$. These quantities can be obtained by discretizing~\eqref{CIODE2} in the following way
\begin{equation}\label{CIODE2-discr}
\begin{cases}
\partial_2{\mathcal H}_{0,1} (g_0(0),X_1^0)  \, Y_1^0 &=  -\delta_x (q^0)_1 - \partial_1{\mathcal H}_{0,1}(g_0(0),X_1^0)\cfrac{g_0(\Delta t) - g_0(0)}{\Delta t} , \\[.25em]
\partial_1{\mathcal H}_{\ell,1} (X_2^0,g_\ell(0)) \, Y_2^0 &= -\delta_x (q^0)_N - \partial_2{\mathcal H}_{\ell,1}(X_2^0,g_\ell(0))\cfrac{g_\ell(\Delta t) - g_\ell(0)}{\Delta t} .
\end{cases}
\end{equation}
Remark that in~\eqref{CIODE2-discr}, if for a given index $1\leq k\leq 2$ the coefficient in factor of $Y_{k}^0$ cancels, then the quadratic function $\mathcal Q_k^0$ does not depend on $Y_{k}^0$ anymore (this is the discrete counterpart of Remark \ref{remvan}); hence in the second equation of~\eqref{eq:update-Ynp1} the value $Y_k^0$ is not required to compute $Y_k^1$. The proposed discretization~\eqref{eq:update-Ynp1} thus automatically adapts to the order of the ODE for the output functions, which dictates as in the continuous case how many initial conditions are required on the output functions.

Once the approximation $X^{n+1}$ of the output functions is known, it is possible to compute the border values $U_1^{n+1} = \mathcal H(g_0^{n+1},X_1^{n+1})$ and $U_N^{n+1} = \mathcal H(X_2^{n+1},g_\ell^{n+1})$, and $\delta_t q_1^{n},\delta_t q_N^n$ are deduced from~\eqref{eq:discrvar-qborder} so that we can update the interior discharge through~\eqref{eq:q-O1-FVupdate}. The iteration is complete.

\begin{remark}
We need to check that the update~\eqref{eq:update-Ynp1} becomes well-defined for $\Delta t > 0$ small enough. Indeed, the second equation of this system admits a unique solution $Y^{n+1}$ if the matrix $\widetilde D_1^n + \Delta t \underline{\mathfrak S}_b' \widetilde D_2^n$ is invertible. Its determinant is the second degree polynomial in $\Delta t$ given by
\begin{gather*}
    \det(\widetilde D_1^n) + \Delta t\Big((\underline{\mathfrak S}_b')_{1,1} (\widetilde D_1^n)_{2,2} (\widetilde D_2^n)_{1,1} + (\underline{\mathfrak S}_b')_{2,2} (\widetilde D_1^n)_{1,1} (\widetilde D_2^n)_{2,2}\Big) \\*
    {}+ \Delta t^2 \Big(\det(\underline{\mathfrak S}_b') \det(\widetilde D_2^n)\Big); 
\end{gather*}
as a consequence of Assumption~\ref{assH2} and Proposition \ref{propS'} this determinant is non zero for $\Delta t$ small enough, so that the update~\eqref{eq:update-Ynp1} is well-defined.
\end{remark}

When approximating the nonlinear Boussinesq-Abbott system~\mbox{\eqref{eq:BP-reformulated2}-\eqref{eq:BP-boundarylayer2}} with the scheme described above, in practice we need to restrict the time-step with a CFL condition similar to that usually encountered when discretizing the hyperbolic nonlinear shallow water system, that is to say we take
\[
\frac{\Delta t}{\Delta x} \leq \frac{K}{\max_i \lvert\lambda(U_i^n)\lvert} ,
\]
with $\lvert\lambda(U_i^n)\rvert = \lvert u_i^n\rvert + \sqrt{\mathtt gh_i^n}$ the maximum eigenvalue in absolute value of the Jacobian matrix $Df_{\rm NSW}(U_i^n)$.
In our numerical experiments such a constraint seems to be a requirement to achieve stable results.

\subsection{Second order MacCormack scheme}

\label{subsec:2nd-order-discr}
Due to the dispersive nature of the Boussinesq-type models, it can be challenging to increase the order of their numerical approximations while keeping them stable and free of spurious oscillations in the presence of non homogeneous boundary conditions. A strategy introduced in \cite{beck2023numerical} for the numerical simulation of waves interacting with floating objects is the MacCormack discretization method, which we describe in the lines below. It consists in alternating a first order prediction step with a correction step during which the predicted state is used. The final update is then obtained by averaging the prediction and correction steps. This procedure is explained in detail thereafter.

\medbreak\noindent
\textsc{Prediction step}~---~Given an approximation $(U_i^n)_{1\leq i\leq N}$ of the solution to~\mbox{\eqref{eq:BP-reformulated2}-\eqref{eq:BP-boundarylayer2}} at time $t^n$, we first compute $U_{\mathrm P,i}^{n+1} = (\zeta_{\mathrm P,i}^{n+1},q_{\mathrm P,i}^{n+1})^T$ the predicted state using a forward Euler method with a left upwinding for the numerical fluxes in every interior cell $2\leq i\leq N-1$
\begin{align}\label{eq:mccormack-prediction}
    \begin{cases}
    \displaystyle
    \frac{\zeta_{\mathrm P,i}^{n+1} - \zeta_i^n}{\Delta t} + \frac{1}{\Delta x}\Big(q_{i}^n - q_{i-1}^n \Big) = 0 \\[.75em]
    \displaystyle
    \frac{q_{\mathrm P,i}^{n+1} - q_i^n}{\Delta t} + \frac{\mathfrak f_{i}^n - \mathfrak f_{i-1}^n}{\Delta x} = \underline{R}_b^0(\widetilde B^n)_{i-1} + \delta_t q_{\mathrm P,1}^n \cdot \underline{\mathfrak s}_{i-1}^{(b,0)} + \delta_t q_{\mathrm P,N}^n\cdot \underline{\mathfrak s}_{i-1}^{(b,\ell)} \\[.75em]
    \displaystyle
    \hspace{11.25em} {}- \frac{1}{2}\Big(\frac{\alpha_{b,i+1} - \alpha_{b,i-1}}{\alpha_{b,i}\, \Delta x} + \frac{3}{2}\frac{b_{i+1} - b_{i-1}}{h_{b,i}\, \Delta x}\Big) \mathfrak f_{i-1}^n
    \end{cases} ,
\end{align}
where we kept the same definitions for $\underline R_b^0,\underline R_b^1,\mathfrak f^n,\widetilde B^n,\underline{\mathfrak s}^{(b,0)},\underline{\mathfrak s}^{(b,\ell)},\alpha_b$ as in the previous Section, which involve the second order centered operator $\delta_x$ given in~\eqref{eq:deltax}.
The boundary cells are treated similarly to the first order scheme described in Section~\ref{subsec:1st-order-discr}; that is to say we enforce
\[
\xi_0^+(U_{\mathrm P,1}^{n+1}) = g_0^{n+1}, \qquad \xi_\ell^-(U_{\mathrm P,N}^{n+1}) = g_\ell^{n+1} ,
\]
then compute the output functions according to~\eqref{eq:update-Ynp1} to get $(X,Y)_{\mathrm P}^{n+1}$, and finally use the reconstruction formulas~\eqref{eq:H-reconstruction} to get $U_{\mathrm P,1}^{n+1}, U_{\mathrm P,N}^{n+1}$ and define
\[
\delta_t q_{\mathrm P,1}^n = \frac{q_{\mathrm P,1}^{n+1} - q_{1}^n}{\Delta t} , \qquad
\delta_t q_{\mathrm P,N}^n = \frac{q_{\mathrm P,N}^{n+1} - q_{N}^n}{\Delta t} .
\]
Notice that, although in~\eqref{eq:mccormack-prediction} we did not write the discharge update under a form involving an explicit discretization of $(h_b S)^*$, it is of course possible to rewrite it in a form similar to~\eqref{eq:q-O1-update}, where $(\underline{h_b S})^*$ defined in~\eqref{eq:hbS-star-discrete} has to be substituted with its left-upwinded counterpart, and without the numerical diffusion term. A similar comment will hold in the correction step.

\medbreak\noindent
\textsc{Correction step}~---~Next the correction term $U_{\mathrm C,i}^{n+1} = (\zeta_{\mathrm C,i}^{n+1},q_{\mathrm C,i}^{n+1})^T$ is computed using a forward Euler update with right upwinding in the interior cells $2\leq i\leq N-1$. It involves the predicted state obtained in the previous step in order to evaluate the flux and source terms:
\begin{align}\label{eq:mccormack-correction}
    \begin{cases}
    \displaystyle
    \frac{\zeta_{\mathrm C,i}^{n+1} - \zeta_i^n}{\Delta t} + \frac{1}{\Delta x}\Big(q_{\mathrm P,i+1}^{n+1} - q_{\mathrm P,i}^{n+1} \Big) = 0 \\[.75em]
    \displaystyle
    \frac{q_{\mathrm C,i}^{n+1} - q_i^n}{\Delta t} + \frac{\mathfrak f_{\mathrm P,i+1}^{n+1} - \mathfrak f_{\mathrm P,i}^{n+1}}{\Delta x} = \underline{R}_b^0(\widetilde B_{\mathrm P}^{n+1})_{i+1} + \delta_t q_{\mathrm C,1}^n \cdot \underline{\mathfrak s}_{i+1}^{(b,0)} + \delta_t q_{\mathrm C,N}^n\cdot \underline{\mathfrak s}_{i+1}^{(b,\ell)} \\[.75em] 
    \displaystyle
    \hspace{13em} {}- \frac{1}{2}\Big(\frac{\alpha_{b,i+1} - \alpha_{b,i-1}}{\alpha_{b,i}\, \Delta x} + \frac{3}{2}\frac{b_{i+1} - b_{i-1}}{h_{b,i}\, \Delta x}\Big) \mathfrak f_{\mathrm P,i+1}^{n+1}
    \end{cases} \! .
\end{align}
In the above, the fluxes $\mathfrak f_{\mathrm P,i}^{n+1}$ and the source term $\widetilde B_{\mathrm P,i}^{n+1}$ are defined as in~\eqref{eq:nonlocal-flux-vector} and~\eqref{eq:discr-flux-source} but making use of the prediction state, that is to say
\begin{gather*}
\forall 1\leq i\leq N,\qquad
\begin{cases}
    \displaystyle
    \widetilde B_{\mathrm P,i}^{n+1} &= \displaystyle -{\mathtt g} \zeta_{\mathrm P,i}^{n+1} \delta_x(b)_i + \frac{3}{2}\frac{\delta_x(b)_i}{h_{b,i}}\widetilde{f}_{\mathrm{NSW}}(U_{\mathrm P,i}^{n+1}) , \\
    \displaystyle
    \mathfrak f_{\mathrm P,i}^{n+1} &= \displaystyle \frac{h_{b,i}^2}{\alpha_{b,i}}\underline R_b^1\Big(\Big\{\frac{1}{h_{b,j}^2}\widetilde f_{\mathrm{NSW}}(U_{\mathrm P,j}^{n+1})\Big\}_{1\leq j\leq N}\Big)_i.
\end{cases}
\end{gather*}
The handling of the border cells is the same as before, except that in~\eqref{eq:update-Ynp1}, for $k\in\{1,2\}$ the matrices $D_k,\widetilde D_k$ and the quadratic forms $\mathcal Q_k^n$ need to be evaluated with the predicted values $(X,Y)_{\mathrm P}^{n+1}$ obtained in the previous step instead of $(X,Y)^n$, and the vectors $\widetilde{V}_{\rm bdry}^n, \widetilde{V}_{\rm int}^n$ need to be evaluated with $U_{\mathrm P}^{n+1}$ instead of $U^n$. Therefore the correction update for the output functions can be written
\begin{align} \label{eq:update-Ynp1-correction}
\begin{cases}
\cfrac{X_{\mathrm C}^{n+1} - X^n}{\Delta t} = Y_{\mathrm C}^{n+1} \\
\widetilde{D}_{\mathrm P,1}^{n+1}\cfrac{Y_{\mathrm C}^{n+1} - Y^{n}}{\Delta t} +
\underline{\mathfrak S}'_b\widetilde{D}_{\mathrm P,2}^{n+1} Y_{\mathrm C}^{n+1} +
\begin{pmatrix}
{\mathcal Q}_{\mathrm P,1}^{n+1}(\frac{g_0^{n+1} - g_0^{n-1}}{2\Delta t},Y_{\mathrm P,1}^{n+1}) \\
{\mathcal Q}_{\mathrm P,2}^{n+1}(Y_{\mathrm P,2}^{n+1},\frac{g_\ell^{n+1} - g_\ell^{n-1}}{2\Delta t})
\end{pmatrix} +  \widetilde{V}_{\rm bdry, P}^{n+1} \\
\hspace{6em} = \widetilde{V}_{\rm int, P}^{n+1}
 -
 {D}_{\mathrm P,1}^{n+1}\begin{pmatrix}  \frac{g_0^{n+1} - 2g_0^n + g_0^{n-1}}{\Delta t^2} \\ \frac{g_\ell^{n+1} - 2g_\ell^n + g_\ell^{n-1}}{\Delta t^2} \end{pmatrix}
-
\underline{\mathfrak S}'_b{D}_{\mathrm P,2}^{n+1}\begin{pmatrix} \frac{g_0^{n+1} - g_0^{n-1}}{2\Delta t} \\ \frac{g_\ell^{n+1} - g_\ell^{n-1}}{2\Delta t} \end{pmatrix}
\end{cases} \! ,
\end{align}
and after computing $U_{\mathrm C,1}^{n+1} = \mathcal H(g_0^{n+1},X_{\mathrm C,1}^{n+1})$ and $U_{\mathrm C,N}^{n+1} = \mathcal H(X_{\mathrm C,2}^{n+1},g_\ell^{n+1})$ we set
\[
\delta_t q_{\mathrm C,1}^n = \frac{q_{\mathrm C,1}^{n+1} - q_{1}^n}{\Delta t} , \qquad
\delta_t q_{\mathrm C,N}^n = \frac{q_{\mathrm C,N}^{n+1} - q_{N}^n}{\Delta t} .
\]

\medbreak\noindent
\textsc{Final step}~---~The final update is defined as the average between the prediction and correction states for all $2\leq i\leq N-1$, and likewise the output functions are averaged in the border cells $i = 1,N$:
\begin{equation}\label{eq:mccormack-final-update}
\begin{cases}
\zeta_i^{n+1} = \cfrac{1}{2}(\zeta_{\mathrm P,i}^{n+1} + \zeta_{\mathrm C,i}^{n+1}) \\
q_i^{n+1} = \cfrac{1}{2}(q_{\mathrm P,i}^{n+1} + q_{\mathrm C,i}^{n+1})
\end{cases},\quad
\begin{cases}
    \xi_0^-(U_1^{n+1}) = \cfrac{1}{2}(\xi_0^-(U_{\mathrm P,1}^{n+1}) + \xi_0^-(U_{\mathrm C,1}^{n+1})) \\
    \xi_\ell^+(U_N^{n+1}) = \cfrac{1}{2}(\xi_\ell^+(U_{\mathrm P,N}^{n+1}) + \xi_\ell^+(U_{\mathrm C,N}^{n+1}))
\end{cases}
\end{equation}
The final border elevations and discharges $\zeta_1^{n+1}, q_1^{n+1}, \zeta_N^{n+1}, q_N^{n+1}$ are recovered through the formula~\eqref{eq:H-reconstruction} involving the reconstruction maps ${\mathcal H}_0,\mathcal H_\ell$ evaluated respectively with $(g_0(t^{n+1}),\xi_0^-(U_1^{n+1}))$ and $(\xi_\ell^+(U_N^{n+1}),g_\ell(t^{n+1}))$.

\begin{remark}
Nothing prevents us from switching the upwinding direction in the first two steps, so that a right upwinding is performed in the prediction step together with a left upwinding in the correction step. This does not seem to affect the numerical results in practice. What is important is that the direction of upwinding alternates between these two stages, otherwise the method becomes first order in space.
\end{remark}

\begin{remark}
Since we do not make use of ghost cells, the alternation of the upwinding direction characterizing the MacCormack scheme cannot be performed in the border cells, where we always need to upwind towards the interior of the domain. This does not prevent the scheme from reaching a second order of accuracy in space in the border cells. Indeed~\eqref{eq:update-Ynp1},~\eqref{CIODE2-discr} and~\eqref{eq:update-Ynp1-correction} are already second order in space thanks to the use of the discrete operator $\delta_x$ given by~\eqref{eq:deltax} to discretize $\mathfrak S_b', \widetilde V_{\rm int}$.
\end{remark}

\section{Numerical simulations}\label{sectnum}

In this section we aim to validate the proposed first order Lax-Friedrichs and second order MacCormack schemes, and to assess the effect that different boundary conditions can have on the solutions; in particular, we numerically investigate the issue of asymptotic stability of the boundary conditions. To this end we shall consider three cases:
\begin{enumerate}
    \item Input functions given by the elevation, output functions given by the discharge
    \begin{equation*}
        \xi_0^+(\zeta,q) = \xi_\ell^-(\zeta,q) = \zeta,\qquad
        \xi_0^-(\zeta,q) = \xi_\ell^+(\zeta,q) = q;
    \end{equation*}
    \item Input functions given by the discharge, output functions given by the elevation
    \begin{equation*}
        \xi_0^+(\zeta,q) = \xi_\ell^-(\zeta,q) = q,\qquad
        \xi_0^-(\zeta,q) = \xi_\ell^+(\zeta,q) = \zeta;
    \end{equation*}
    \item Input functions given by the incoming Riemann invariants, output functions given by the outgoing Riemann invariants
    \begin{equation*}
        \begin{cases}\xi_0^+(\zeta,q) = u + 2\sqrt{\mathtt gh}\\\xi_\ell^-(\zeta,q) = u - 2\sqrt{\mathtt gh}
        \end{cases},\qquad
        \begin{cases}\xi_0^-(\zeta,q) = u - 2\sqrt{\mathtt gh}\\\xi_\ell^+(\zeta,q) = u + 2\sqrt{\mathtt gh}
        \end{cases}.
    \end{equation*}
\end{enumerate}
The gravitational acceleration $\mathtt g$ will be taken equal to $9.81$, and unless specified otherwise the ratio $\Delta t/\Delta x$ will be taken equal to $0.8$ for the Lax-Friedrichs scheme and to $0.45$ for the MacCormack scheme.

We propose in \S \ref{sectnumres} a numerical resolution of the initial boundary value problem with the different boundary conditions presented above. In the case of a flat bottom, considered in \S \ref{sectnumresflat}, the Boussinesq-Abbott system possesses solitary waves that can be used to study the convergence of the schemes. In the case of a non-flat topography investigated in \S \ref{subsec:BAbottom-testcase}, this is no longer true and we therefore use a solution computed in a wider domain and for a very refined mesh to study the convergence. The theoretical open problem of asymptotic stability is then numerically investigated in \S \ref{sectnumAS}.

\subsection{Numerical resolution of the initial boundary value problem}\label{sectnumres}

\subsubsection{The Boussinesq-Abbott system with flat topography}\label{sectnumresflat}

We consider the case of a solitary wave travelling over a flat bottom, which consists in a solution of~\eqref{eq:BA} of the form
\begin{equation}\label{eq:soliton}
   \zeta(t,x) = \widetilde\zeta(x - x_0 - ct) ,\qquad q(t,x) = \widetilde q(x - x_0 - ct) ,
\end{equation}
where $c$ is the celerity at which the solitary wave propagates without deformation, and where $x_0\in\mathbb R$ is the initial position of the maximum elevation $\zeta(0,x_0)$ denoted $\zeta_{\max}$. Despite the apparent simplicity of this solution, it involves nonlinear and dispersive effects which make it an interesting test-case for numerical validation purposes.
As explained in~\cite{Lannes_Weynans_2020}, the wave profile $\widetilde\zeta$ verifies the following second order ODE \begin{equation}\label{eq:soliton-profile}
   - c^2\frac{h_0\widetilde\zeta}{h_0 + \widetilde\zeta} + \frac{c^2h_0^2}{3}\widetilde\zeta'' + \frac{\mathtt g}{2}(\widetilde\zeta^2 + 2h_0\widetilde\zeta) = 0 .
\end{equation}
It is obtained by remarking that~\eqref{eq:soliton} implies $\widetilde q = c\widetilde\zeta$ from the first equation of~\eqref{eq:BA}. Injecting this in the second equation of~\eqref{eq:BA} and upon integration in space with the hypothesis that $\widetilde\zeta$ vanishes at infinity, one recovers~\eqref{eq:soliton-profile}. Furthermore, keeping in mind that $\widetilde\zeta(0) = \zeta_{\max}$, the celerity is shown to satisfy the relation
\begin{equation*}
    c^2 = \frac{\frac{\mathtt g}{6}\zeta_{\max}^3 + \frac{\mathtt gh_0}{2}\zeta_{\max}^2}{h_0^2(\zeta_{\max}/h_0 - \ln(1 + \zeta_{\max}/h_0))}
\end{equation*}
by integrating~\eqref{eq:soliton-profile} against $\widetilde\zeta'$, using again that $\widetilde\zeta$ cancels at infinity and that $\widetilde\zeta'(0) = 0$.

Once the solution of~\eqref{eq:soliton-profile} has been approximated, it can be used to generate initial and boundary conditions in the domain $(0,\ell)$, and can also serve as a reference solution to measure the error of the proposed numerical schemes. 
The initial condition is taken as $(\zeta^{\rm in}, q^{\rm in})(x) = (\widetilde\zeta,c\, \widetilde\zeta)(x-x_0)$ and the general boundary conditions write
\begin{align*}
    \xi_0^+(\zeta_0,q_0)(t) &= g_0(t) := \xi_0^+(\widetilde\zeta(-x_0-ct), c\, \widetilde\zeta(-x_0-ct)) , \\
    \xi_\ell^-(\zeta_\ell,q_\ell)(t) &= g_\ell(t) := \xi_\ell^-(\widetilde\zeta(\ell-x_0-ct), c\, \widetilde\zeta(\ell-x_0-ct)) .
\end{align*}
In practice we take $\ell = 60$, $h_0 = 1$, $\zeta_{\max} = h_0/5$, and $c > 0$. In order to validate the treatment of the boundary conditions we consider the two settings below.

\begin{description}[leftmargin=1em]
\item[\sc Incoming solitary wave --]The solitary wave is initially centered on $x_0 = -\ell/2$ outside of the computational domain $(0,\ell)$.
It then enters the domain from the left boundary, and the simulation stops at time $t = \ell/c$ when the solitary wave is centered on $\ell/2$.
\item[\sc Outgoing solitary wave --]The solitary wave is initially centered on $x_0 = \ell/2$ inside the computational domain, and the simulation is stopped once most of the wave left the domain through the right boundary at time $t = 3\ell/4c$.
\end{description}

\begin{figure}[htbp!]\centering
\includegraphics[trim={3.5cm 11.82cm 3.5cm 11.125cm}, clip, width=.9\textwidth]{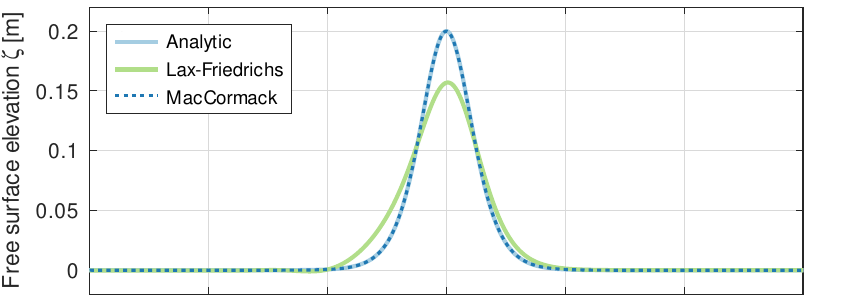}\\
\includegraphics[trim={3.5cm 10.625cm 3.5cm 11.125cm}, clip, width=.9\textwidth]{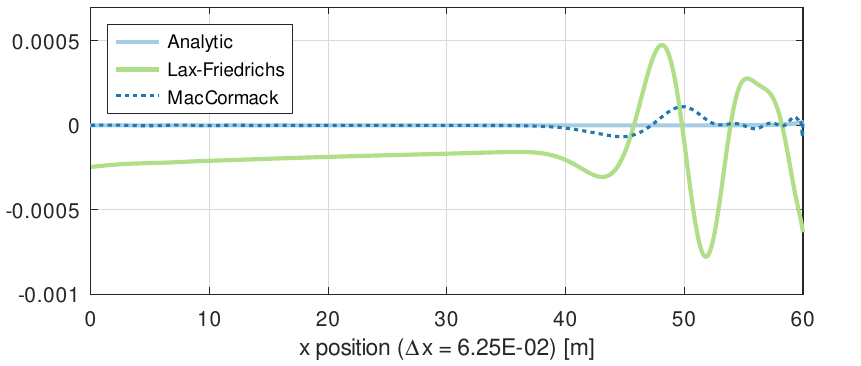}
\caption{Free surface elevation for the incoming and outgoing solitary wave test-cases (respectively top and bottom); incoming Riemann invariants imposed at the boundaries}
\label{fig:incoming-sol-zeta}
\end{figure}

Figure~\ref{fig:incoming-sol-zeta} compares the Lax-Friedrichs and MacCormack schemes at final time for both settings with incoming Riemann invariants enforced as boundary conditions. Unsurprisingly, the Lax-Friedrichs is much more diffusive than MacCormack, and also creates more reflections when trying to evacuate the wave through the right boundary.

Next  we compare the three different choices for $\xi_0^+,\xi_\ell^-$ mentioned at the beginning of Section~\ref{sectnum}, and for each case we display the following $\ell^2$ error on the elevation 
\begin{equation}\label{eq:L2-error}
E_{\rm num}^n = \sqrt{\frac{\Delta x}{\ell}}\bigg(\sum_{i=1}^N \big(\zeta_i^n - \widetilde\zeta(x_i - x_0 - ct^n)\big)^2 \bigg)^{1/2}
\end{equation}
at final time $t=\ell/c$.

Tables~\ref{table:LF-incoming} and~\ref{table:MC-incoming} correspond respectively to the Lax-Friedrichs and MacCormack schemes for the incoming solitary wave test-case.
Experimentally, both methods achieve the expected order of convergence, that is to say first order for Lax-Friedrichs and second order for MacCormack. The difference between the various boundary conditions in terms of the $\ell^2$ error is marginal, except when enforcing the discharge with the MacCormack scheme which leads to a substantially larger error than the two other choices.

\begin{table}[htbp!]\centering
\begin{tabular}{c|cc|cc|cc}
\toprule
$\Delta x$ & \multicolumn{2}{c|}{$\zeta$ enforced} & \multicolumn{2}{c|}{$q$ enforced} & \multicolumn{2}{c}{$R^{\pm}$ enforced}        \\\midrule
& $L^2$-error & Order & $L^2$-error & Order & $L^2$-error & Order \\
  8.82E-02 & 4.409E-03 & --   & 4.728E-03 & --   & 4.482E-03 & --   \\
  6.25E-02 & 3.263E-03 & 0.87 & 3.484E-03 & 0.88 & 3.312E-03 & 0.87 \\
  4.42E-02 & 2.387E-03 & 0.90 & 2.539E-03 & 0.91 & 2.420E-03 & 0.91 \\
  3.12E-02 & 1.727E-03 & 0.93 & 1.833E-03 & 0.94 & 1.750E-03 & 0.93 \\
  2.21E-02 & 1.244E-03 & 0.95 & 1.318E-03 & 0.95 & 1.260E-03 & 0.95 \\
  1.56E-02 & 8.906E-04 & 0.97 & 9.411E-04 & 0.97 & 9.015E-04 & 0.97 \\
\bottomrule
\end{tabular}
\caption{Lax-Friedrichs scheme for the incoming solitary wave with different boundary conditions. In the last column $R^{\pm}$ refers to the incoming Riemann invariant associated with the shallow water system.}
\label{table:LF-incoming}
\end{table}

\begin{table}[htbp!]\centering
\begin{tabular}{c|cc|cc|cc}
\toprule
$\Delta x$ & \multicolumn{2}{c|}{$\zeta$ enforced} & \multicolumn{2}{c|}{$q$ enforced} & \multicolumn{2}{c}{$R^{\pm}$ enforced}        \\\midrule
& $L^2$-error & Order & $L^2$-error & Order & $L^2$-error & Order \\
  1.00E+00 & 7.846E-03 & --   & 1.186E-02 & --   & 9.424E-03 & -- \\
  5.00E-01 & 2.473E-03 & 1.67 & 3.789E-03 & 1.65 & 2.783E-03 & 1.76 \\
  2.50E-01 & 6.670E-04 & 1.89 & 1.026E-03 & 1.89 & 7.199E-04 & 1.95 \\
  1.25E-01 & 1.696E-04 & 1.98 & 2.662E-04 & 1.95 & 1.832E-04 & 1.97 \\
  6.25E-02 & 4.312E-05 & 1.98 & 6.804E-05 & 1.97 & 4.624E-05 & 1.99 \\
  3.12E-02 & 1.107E-05 & 1.96 & 1.731E-05 & 1.97 & 1.162E-05 & 1.99 \\
\bottomrule
\end{tabular}
\caption{MacCormack scheme for the incoming solitary wave.}
\label{table:MC-incoming}
\end{table}

\begin{table}[htbp!]\centering
\begin{tabular}{c|cc|cc|cc}
\toprule
$\Delta x$ & \multicolumn{2}{c|}{$\zeta$ enforced} & \multicolumn{2}{c|}{$q$ enforced} & \multicolumn{2}{c}{$R^{\pm}$ enforced}        \\\midrule
& $L^2$-error & Order & $L^2$-error & Order & $L^2$-error & Order \\
  6.25E-02 & 2.363E-03 & --   & 2.666E-03 & --   & 1.331E-04 & --   \\
  4.42E-02 & 1.785E-03 & 0.81 & 2.031E-03 & 0.78 & 1.070E-04 & 0.63 \\
  3.12E-02 & 1.322E-03 & 0.87 & 1.514E-03 & 0.85 & 8.401E-05 & 0.70 \\
  2.21E-02 & 9.725E-04 & 0.89 & 1.119E-03 & 0.87 & 6.503E-05 & 0.74 \\
  1.56E-02 & 7.053E-04 & 0.93 & 8.145E-04 & 0.92 & 4.939E-05 & 0.79 \\
  1.11E-02 & 5.098E-04 & 0.94 & 5.903E-04 & 0.93 & 3.711E-05 & 0.83 \\
\bottomrule
\end{tabular}
\caption{Lax-Friedrichs scheme for the outgoing solitary wave.}
\label{table:LF-outgoing}
\end{table}

\begin{table}[htbp!]\centering
\begin{tabular}{c|cc|cc|cc}
\toprule
$\Delta x$ & \multicolumn{2}{c|}{$\zeta$ enforced} & \multicolumn{2}{c|}{$q$ enforced} & \multicolumn{2}{c}{$R^{\pm}$ enforced}        \\\midrule
& $L^2$-error & Order & $L^2$-error & Order & $L^2$-error & Order \\
   1.00E+00 & 6.430E-03 & --   & 1.031E-02 & --   & 4.287E-03 & -- \\
   5.00E-01 & 1.885E-03 & 1.77 & 4.437E-03 & 1.22 & 1.993E-03 & 1.11 \\
   2.50E-01 & 5.568E-04 & 1.76 & 1.257E-03 & 1.82 & 5.607E-04 & 1.83 \\
   1.25E-01 & 1.459E-04 & 1.93 & 3.252E-04 & 1.95 & 1.318E-04 & 2.09 \\
   6.25E-02 & 3.674E-05 & 1.99 & 8.223E-05 & 1.98 & 2.826E-05 & 2.22 \\
   3.12E-02 & 9.162E-06 & 2.00 & 2.067E-05 & 1.99 & 7.085E-06 & 2.00 \\
\bottomrule
\end{tabular}
\caption{MacCormack scheme for the outgoing solitary wave.}
\label{table:MC-outgoing}
\end{table}

Likewise, the outgoing solitary wave test-case is addressed in Table~\ref{table:LF-outgoing} for Lax-Friedrichs and Table~\ref{table:MC-outgoing} for MacCormack. The latter method shows good agreement with a second order convergence. For Lax-Friedrichs the situation is comparable to the incoming solitary wave test-case, except that a more refined mesh is required to start approaching a first order convergence. Note also that when enforcing the incoming Riemann invariants with the Lax-Friedrichs scheme, the order of convergence seems smaller than for the other boundary conditions, however we remark that in this case the errors differ by one order of magnitude in favor of the boundary conditions obtained by enforcing the incoming Riemann invariants.

\subsubsection{The Boussinesq-Abbott system with varying bathymetry}
\label{subsec:BAbottom-testcase}

Since no analytic expression is available for solutions of the Boussinesq-Abbott model~\eqref{eq:BP} in the general case of unsteady flows over a non-flat bottom, we instead approximate a reference solution over a large domain $(-\ell, 2\ell)$ with periodic boundary conditions by the mean of a fine mesh; this approximated reference solution is then used to generate boundary conditions for the small domain $(0, \ell)$, in which the solution can be solved numerically with the proposed Lax-Friedrichs and MacCormack schemes.

The small domain length is set to $\ell = 25$, and we choose the following values for the characteristic depth, bathymetry and wave amplitudes:
\begin{align*}
&
    h_0 = 1,\quad\beta = 1/4,\quad A = h_0/4 .
\end{align*}
The bathymetry profile is defined as follows
\begin{align}\label{eq:smooth-bar}
b(x) = \beta h_0\cdot\begin{cases}
1 & \text{if } \lvert x - \ell/2\rvert < \frac{\ell}{4} \\
\sigma(1.5 - \frac{2}{\ell}\lvert x - \ell/2 \rvert)& \text{if } \frac{\ell}{4} \leq\lvert x - \ell/2\rvert\leq \frac{3\ell}{4} \\
0 & \text{otherwise}
\end{cases} ,
\end{align}
where $\sigma:[0,1]\rightarrow[0,1]$ is a smooth step function given here by $\sigma(x) = 6x^5 - 15x^4 + 10x^3$, which allows the bathymetry~\eqref{eq:smooth-bar} to be $C^2$. The initial condition is taken as a Gaussian centered outside of the small domain:
\begin{align}\label{eq:initcond-gaussian}
    \zeta^{\rm in}(x) = A\cdot\exp(-(x - 11)^2/3), \quad
    q^{\rm in}(x) = 5\cdot\zeta^{\rm in}(x) .
\end{align}
Recalling the shallowness parameter $\mu = (2\pi h_0)^2/\lambda^2$ with the characteristic wavelength defined here as $\lambda = \mathrm{length}\, \{x,\ \zeta^{\rm in}(x) > A/100\}$, one has $\mu\approx 0.6$ so that the flow is only moderately shallow.
The nonlinearity parameter $A/h_0$ is $\epsilon = 0.25$.

\begin{figure}[htbp!]\centering
\includegraphics[height=4.9cm,trim={3.95cm 8.5cm 4.75cm 9cm},clip]{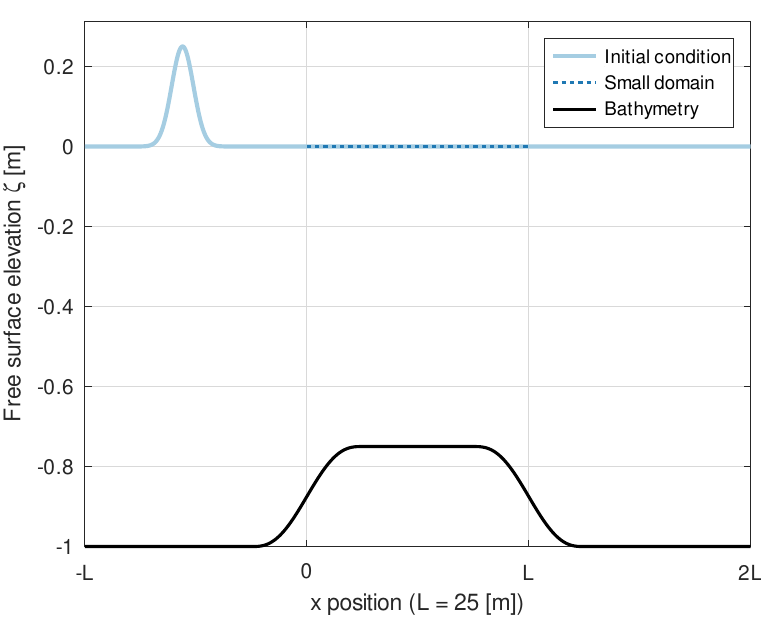}\hfill
\includegraphics[height=4.9cm,trim={4.35cm 8.5cm 4.8cm 9cm},clip]{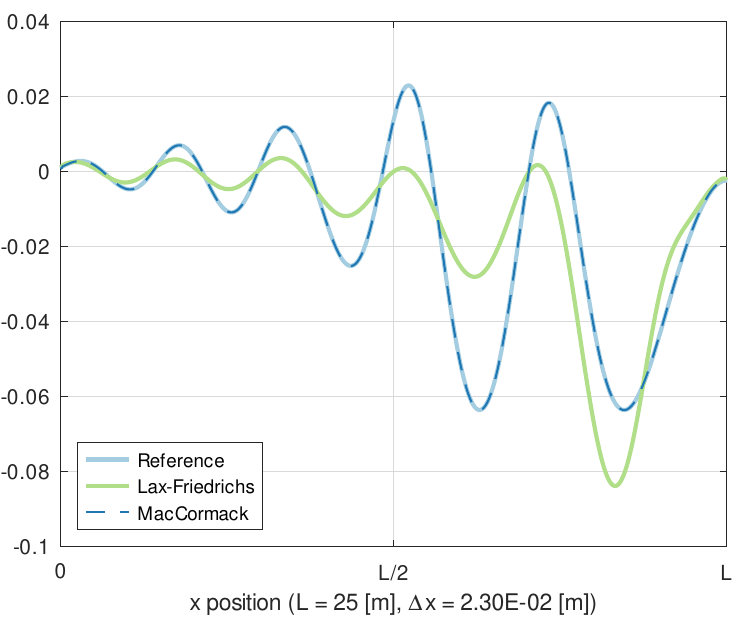}
\caption{Left: initial condition. Right: reference solution, Lax-Friedrichs and MacCormack approximations at time $t = 15$, both obtained using the same CFL constant of 0.45.}
\label{fig:gaussian-over-bump}
\end{figure}

The periodic reference solution is approximated over the large domain $(-\ell,2\ell)$ with a spatial step $\Delta x\approx 8.14\cdot 10^{-3}$, and the simulation stops at time $t = 15$; see Figure~\ref{fig:gaussian-over-bump} for plots of the initial and final states.
Note that the shape of the initial free surface elevation~\eqref{eq:initcond-gaussian} is not preserved through time. More specifically, the Gaussian splits into right- and left-going signals; owing to the periodic condition, both of these signals will enter the small domain $(0,\ell)$ respectively from the left and right boundaries, moreover the right-going signal begins to exit the small domain before the simulation ends. Therefore this can be considered a rich test-case featuring wave generation, interaction and evacuation over a non-flat bottom $b$ whose derivative doesn't vanish at boundaries $x = 0,\ell$.

\begin{table}[htbp!]\centering
\begin{tabular}{c|cc|cc|cc}
\toprule
$\Delta x$ & \multicolumn{2}{c|}{$\zeta$ enforced} & \multicolumn{2}{c|}{$q$ enforced} & \multicolumn{2}{c}{$R^{\pm}$ enforced}        \\\midrule
& $L^2$-error & Order & $L^2$-error & Order & $L^2$-error & Order \\
  3.26E-02 & 1.017E-02 & --   & 1.300E-02 & --   & 8.545E-03 & --   \\
  2.30E-02 & 7.862E-03 & 0.74 & 1.016E-02 & 0.71 & 6.604E-03 & 0.74 \\
  1.63E-02 & 5.962E-03 & 0.80 & 7.759E-03 & 0.78 & 5.003E-03 & 0.80 \\
  1.15E-02 & 4.440E-03 & 0.85 & 5.802E-03 & 0.84 & 3.721E-03 & 0.85 \\
  8.14E-03 & 3.265E-03 & 0.89 & 4.278E-03 & 0.88 & 2.733E-03 & 0.89 \\
  5.75E-03 & 2.375E-03 & 0.92 & 3.118E-03 & 0.91 & 1.986E-03 & 0.92 \\
\bottomrule
\end{tabular}
\caption{Lax-Friedrichs scheme for the Gaussian over bump test-case.}
\label{table:LF-gaussian-varyingbottom}
\end{table}

\begin{table}[htbp!]\centering
\begin{tabular}{c|cc|cc|cc}
\toprule
$\Delta x$ & \multicolumn{2}{c|}{$\zeta$ enforced} & \multicolumn{2}{c|}{$q$ enforced} & \multicolumn{2}{c}{$R^{\pm}$ enforced}        \\\midrule
& $L^2$-error & Order & $L^2$-error & Order & $L^2$-error & Order \\
   2.60E-01 & 4.635E-03 & --   & 5.708E-03 & --   &  3.301E-03 & -- \\
   1.84E-01 & 2.470E-03 & 1.81 & 3.076E-03 & 1.78 &  1.690E-03 & 1.92 \\
   1.30E-01 & 1.288E-03 & 1.89 & 1.629E-03 & 1.84 &  8.485E-04 & 2.00 \\
   9.19E-02 & 6.575E-04 & 1.93 & 8.402E-04 & 1.90 &  4.095E-04 & 2.09 \\
   6.51E-02 & 3.384E-04 & 1.93 & 4.316E-04 & 1.93 &  1.930E-04 & 2.18 \\
   4.60E-02 & 1.777E-04 & 1.85 & 2.211E-04 & 1.92 &  9.056E-05 & 2.17 \\
\bottomrule
\end{tabular}
\caption{MacCormack scheme for the Gaussian over bump test-case.}
\label{table:MC-gaussian-varyingbottom}
\end{table}

Comparing the numerical solution obtained in the small domain with different mesh sizes to the reference solution, we can compute the $\ell^2$ error~\eqref{eq:L2-error} and the associated experimental orders of convergence are shown in Tables~\ref{table:LF-gaussian-varyingbottom} and~\ref{table:MC-gaussian-varyingbottom}. Both the Lax-Friedrichs and MacCormack schemes achieve the expected order for the different boundary conditions considered. The most advantageous choice seems to enforce the incoming Riemann invariants, as it leads to the smallest error.

\subsection{Asymptotic stability}\label{sectnumAS}

Finally we wish to assess the feasibility of achieving a reliable approximation of the solution to the Boussinesq-Abbott system~\eqref{eq:BP} over a varying bathymetry when starting the simulation with a wrong initial condition.
This is motivated by the fact that in real-life, measurements of the free surface elevation can only be performed at a limited number of points, preventing us from knowing the state of the flow everywhere in the domain of interest. Instead, one has to provide an initial guess, which might not be accurate at all.

To this end, we use a setup similar to the one from Section~\ref{subsec:BAbottom-testcase}: a periodic reference solution computed on the large domain $(-\ell,2\ell)$ is used to generate boundary conditions for the small domain $(0,\ell)$. We keep the same domain length $\ell=25$ and the same definition~\eqref{eq:smooth-bar} for the bottom, however the characteristic depth and wave amplitude are now scaled with respect to a parameter $K_\mu$ as follows:
\begin{align*}
    h_0 = \sqrt{K_\mu},\quad
    A = \sqrt{K_\mu}/4 .
\end{align*}
The initial data of the reference solution consists in a sine wave of period $\lambda = 3\ell/5$ and amplitude $A$ for the elevation, together with a nonzero discharge:
\begin{equation}
    \widetilde\zeta^{\rm in}(x) = A\cdot\sin(2\pi x/\lambda),\qquad \widetilde q^{\rm in}(x) = 2\cdot \widetilde\zeta^{\rm in}(x).
\end{equation}
The factor $K_\mu$ shall be taken equal to $(2\pi/\lambda)^{-2}\mu$, with $\mu$ the shallowness parameter valued in $\{0.01, 0.1, 1\}$. The nonlinearity parameter $\epsilon$ and bathymetry parameter $\beta$ are kept to $0.25$. We also introduce the characteristic time $T_0$ given by $2\pi T_0 = \lambda/\sqrt{\mathtt g h_0}$.

\begin{figure}[ht!]\centering
\includegraphics[width=.9\textwidth,trim={4cm 8.5cm 4cm 9.25cm},clip]{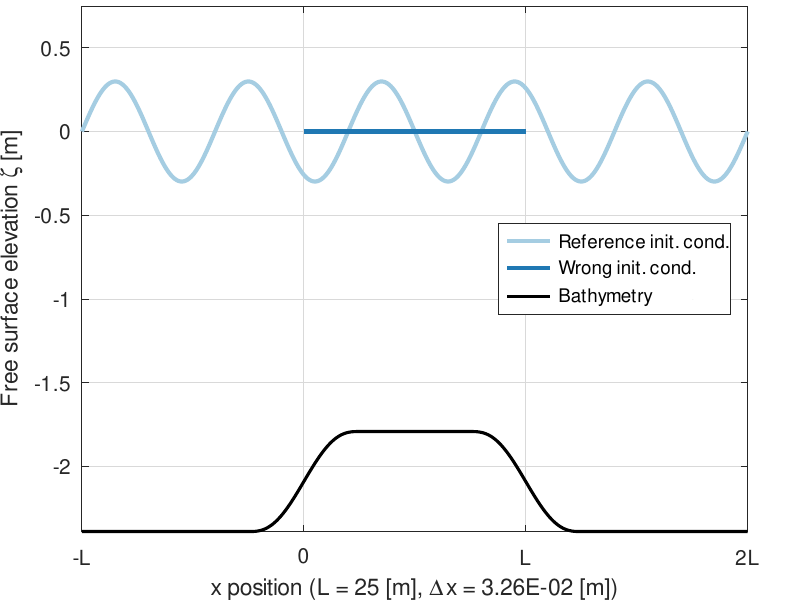}
\caption{Initial elevation for reference and small domain solutions ($\mu=1$).}
\label{fig:enforce-sine-t0}
\end{figure}

When approximating the reference solution over the small domain $(0,\ell)$ with the proposed MacCormack scheme, the simulation is initialized with a lake at rest $(\zeta^{\rm in}, q^{\rm in}) = (0,0)$ which is different from the initial state of the reference solution. This initial setup is plotted in Figure~\ref{fig:enforce-sine-t0} for $\mu = 1$. In order to comply with the system~\eqref{CIODE2}, which we recall provides compatibility conditions when enforcing $\zeta$, the reference solution $(\widetilde\zeta, \widetilde q)$ is progressively enforced at the boundaries of the small domain by the mean of the smooth step function $\sigma$ defined in Section~\ref{subsec:BAbottom-testcase}:
\begin{align*}
&
    \xi_0^+(\zeta_0,q_0)(t) = g_0(t) := \sigma(5t/T_0)\cdot\xi_0^+(\widetilde\zeta(t,0), \widetilde q(t,0)) , \\*
&
   \xi_\ell^-(\zeta_\ell,q_\ell)(t) = g_\ell(t) := \sigma(5t/T_0)\cdot\xi_\ell^-(\widetilde\zeta(t,\ell), \widetilde q(t,\ell)) .
\end{align*}
If on the contrary the boundary conditions on the elevation were enforced directly at $t=0$, then system~\eqref{CIODE2} would be violated, and the well-posedness result from Theorem~\ref{theoWP} does not hold anymore in this situation.

\medbreak
We plot the results obtained at final time $t = 50\, T_0$ in Figure~\ref{fig:sine-over-bump-mu0-mu0p1} for $\mu = 1$ and $\mu = 0.1$, and in Figure~\ref{fig:sine-over-bump-mu0p01} for $\mu = 0.01$. Out of the three boundary conditions tested, only the choice of enforcing the incoming Riemann invariants seems to allow the $\ell^2$ error~\eqref{eq:L2-error} to decay to zero, which is consistent with the considerations of Section~\ref{sectAS}. On the other hand, this is not the case when enforcing the elevation or the discharge; our interpretation is that the initial information --- which represents the perturbation of the reference solution --- is prevented from leaving the domain due to reflections. Reducing the shallowness parameter to~$0.01$, one eventually observes a blow up of the numerical approximation for these two boundary conditions, and which is avoided with the incoming Riemann invariants. As a conclusion, the latter should clearly be preferred over the elevation or discharge in a situation where waves need to be evacuated from the computational domain.

\begin{remark}
In practice, enforcing the incoming Riemann invariant at only one of the two boundaries is enough to recover a good approximation of the reference solution. In this case the $\ell^2$ error on the elevation decays slightly slower towards zero compared to when the incoming Riemann invariants are enforced at both boundaries. 
\end{remark}

\begin{figure}[p!]\centering
\includegraphics[height=4.85cm,trim={3.8cm 8.5cm 4.75cm 9.25cm},clip]{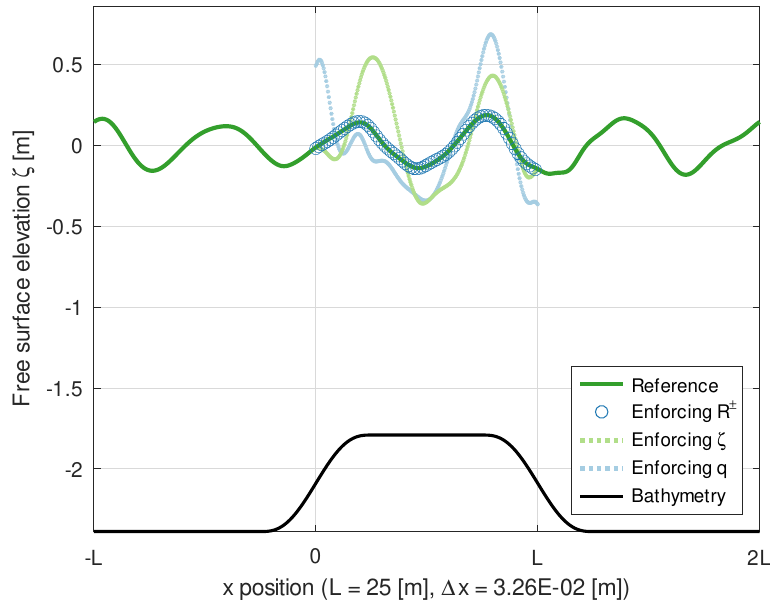}\hfill%
\includegraphics[height=4.85cm,trim={4.5cm 8.5cm 4.75cm 9.25cm},clip]{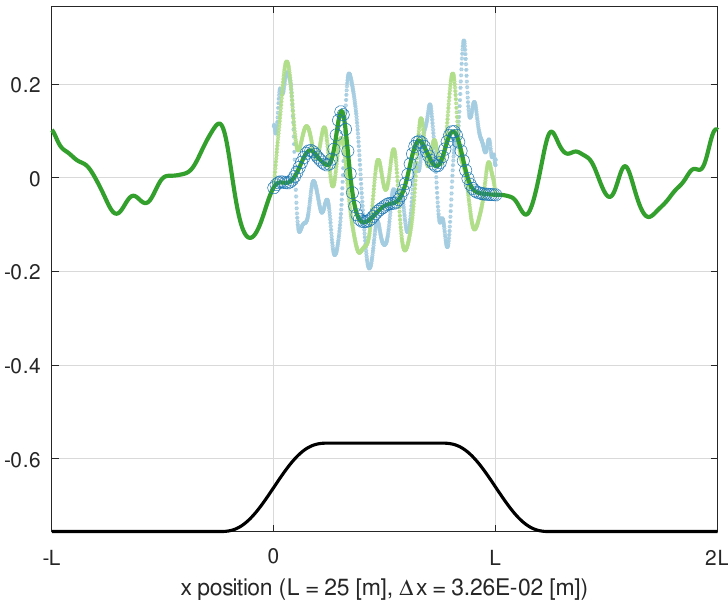}\\[.5em]
\includegraphics[height=4.85cm,,trim={3.8cm 8.5cm 4.75cm 9.25cm},clip]{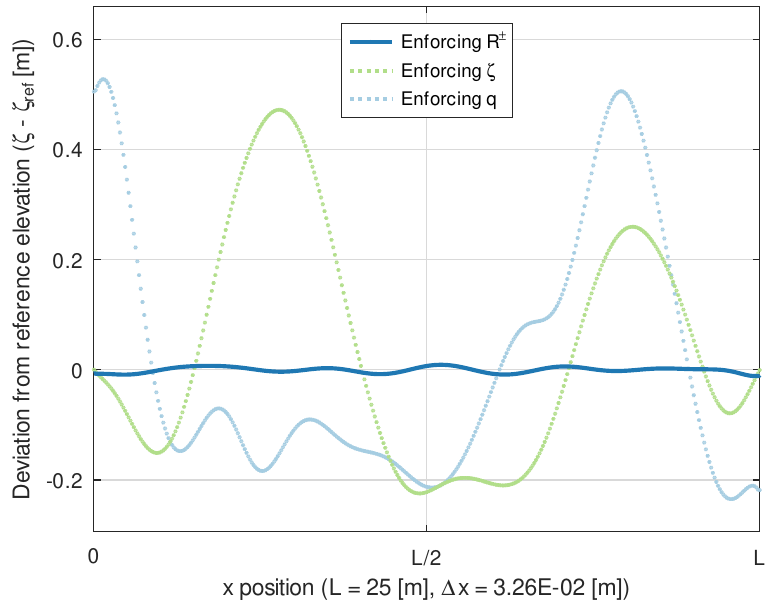}\hfill%
\includegraphics[height=4.85cm,,trim={4.5cm 8.5cm 4.75cm 9.25cm},clip]{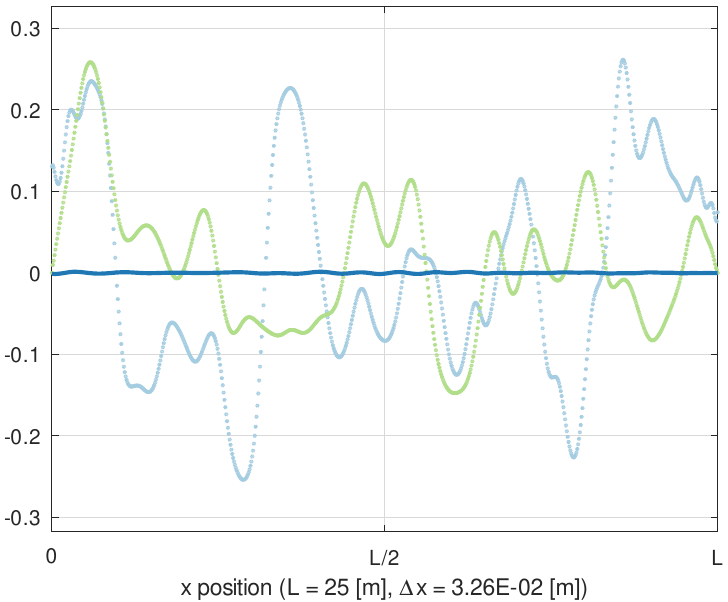}\\[.5em]
\includegraphics[height=4.8cm,trim={3.8cm 8.5cm 4.75cm 9.25cm},clip]{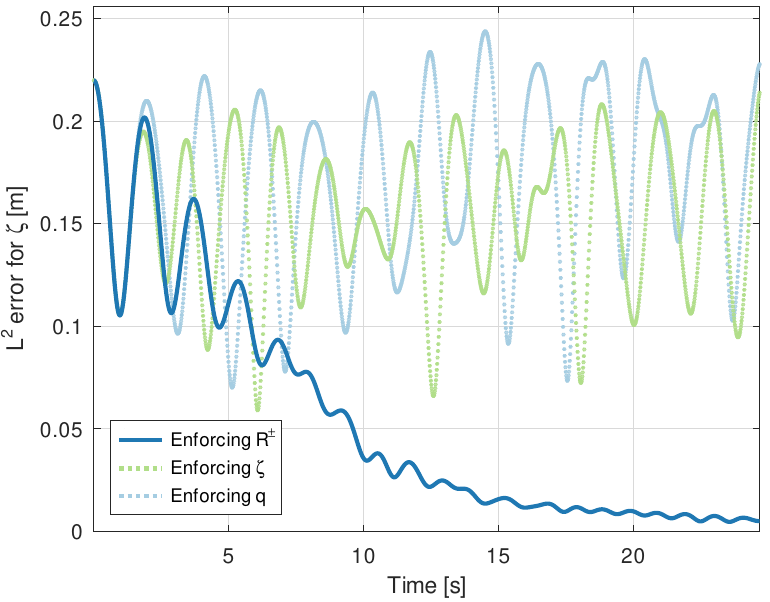}\hfill%
\includegraphics[height=4.8cm,trim={4.375cm 8.5cm 4.75cm 9.25cm},clip]{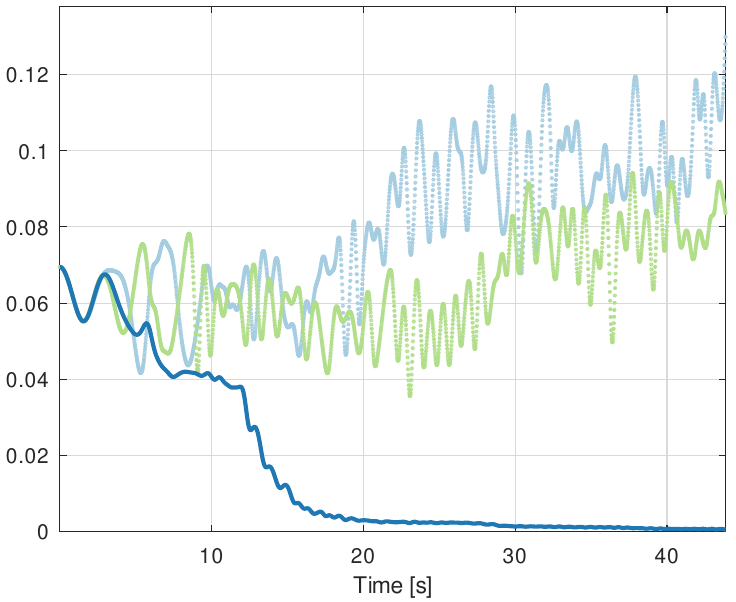}
\caption{Flow over bar test-case for $\mu=1$ (first column) and $\mu=10^{-1}$ (second column). First and second rows correspond respectively to the MacCormack elevation and its deviation from the reference elevation $\widetilde\zeta$ at final time $t = 50\, T_0$ for various boundary conditions. The last row represents the $\ell^2$ error~\eqref{eq:L2-error} over time. Out of the three boundary conditions investigated here, only enforcing the incoming Riemann invariants allows this error to decay towards zero.}
\label{fig:sine-over-bump-mu0-mu0p1}
\end{figure}

\begin{figure}[p!]\centering
\includegraphics[height=7.905cm,trim={3.8cm 8.5cm 4.75cm 9.25cm},clip]{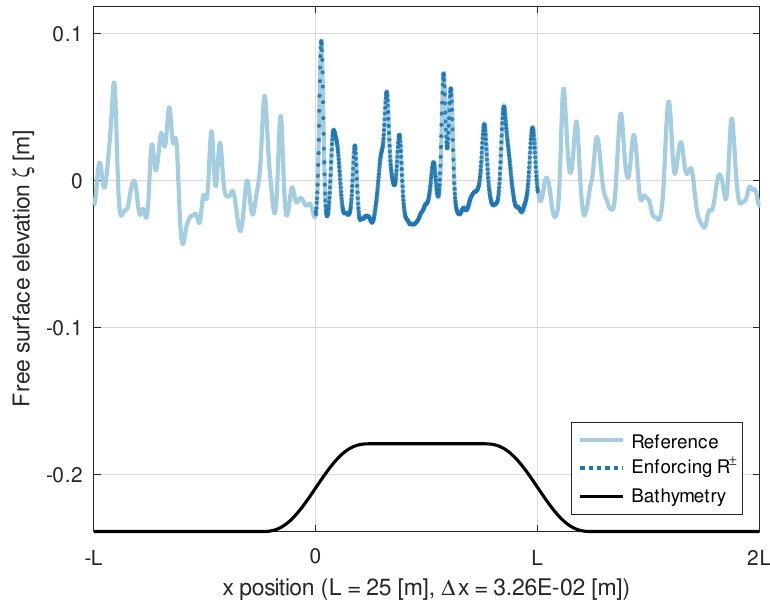}\\[.5em]
\includegraphics[height=7.905cm,trim={3.6cm 8.5cm 4.5cm 9.25cm},clip]{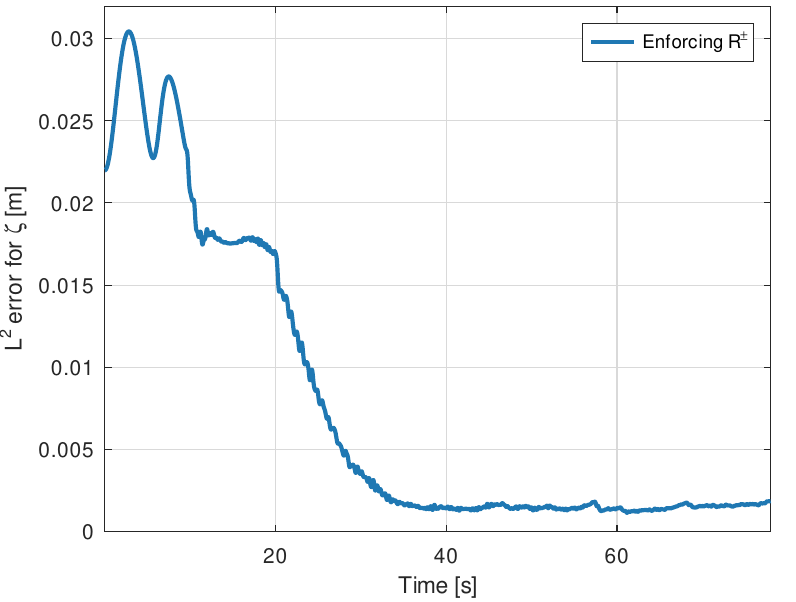}
\caption{Flow over bar test-case for $\mu=10^{-2}$. The first plot was obtained at final time $t = 50\, T_0$. Enforcing $\zeta$ or $q$ as boundary conditions led to a blow up of the MacCormack approximation, therefore only the case of incoming Riemann invariants is displayed here. The $\ell^2$ error~\eqref{eq:L2-error} decays over time but does not quite vanish; this is most likely due to the coarseness of the mesh which make it difficult to accurately capture the high frequencies that develop in the solution. A fix would be to refine the mesh further.}
\label{fig:sine-over-bump-mu0p01}
\end{figure}

\section{Conclusion and perspectives}
In this article, the initial boundary value problem linked to the Boussinesq-Abbott model with non flat bottom has been considered. When imposing the values of the surface elevation at the boundaries, we have seen that like in the case of a flat bottom over a half-line treated in~\cite{Lannes_Weynans_2020}, this problem can be equivalently reformulated as an initial value problem involving a nonlocal flux and an ODE on the trace of the discharge, even in presence of a varying topography and over a bounded domain. This formulation is however more complex because some commutation properties used in the case of a flat bottom are no longer true. We then proposed a new method allowing to prescribe a nonlinear function of the unknowns at the endpoints of the domain by a given function of time. This method is based on the notion of \emph{input} functions, which represent the information that one wants to let in the domain, and that of \emph{output} functions. We believe that the ability to enforce such general boundary conditions is quite relevant for complex applications such as the ones encountered in oceanography. Importantly, our reformulated model defines an ODE on some functional space; under regularity assumptions on the boundary data and on the initial condition, it is possible to obtain the well-posedness of this model in finite time by the Cauchy-Lipschitz theorem. We do not believe however that this approach can be transposed to the Serre-Green-Naghdi model, for which the infinite dimensional ODE structure of the Boussinesq-Abbott system is lost.

This theoretical study was then followed by a numerical discretization of the equations using a hybrid finite difference/finite volumes approach. New schemes of first and second orders were designed using respectively the Lax-Friedrichs numerical flux and the MacCormack strategy, similarly to what was proposed in~\cite{beck2023numerical} for wave-structure interactions over a flat bottom and for boundary conditions on the horizontal discharge. Owing to a simple adaptation of the equations, these schemes can be rendered well-balanced, that is to say that they preserve the lake at rest equilibrium. Next we performed numerical experiments validating the expected orders of convergence for various types of boundary conditions, in a complex setup featuring wave generation and evacuation over a varying topography. Finally we explored numerically the question of asymptotic stability, which is to know whether solutions arising from different initial data but with the same boundary conditions converge to each other after a transitory regime. We found that this is the case when enforcing the incoming Riemann invariants of the shallow water equations, but not when enforcing the elevation nor the discharge. Again this underlines the interest there is to be able to enforce general boundary conditions.

We consider several perspectives to the present work. The first one is related to a limitation inherent to dispersive models in general (including Boussinesq and Serre-Green-Naghdi), that is their inability to describe the breaking of waves; in fact for steep variations of the elevation it is well known that these models produce non physical spurious oscillations. Despite being less accurate than these dispersive models, the shallow water equations present the benefit of being able to handle breaking waves in a physically relevant way by dissipating the energy in the form of shocks, see~\cite{BONNETON20071459} for a physical justification and~\cite{toro2013riemann} for a general methodology on Riemann problems. Note also that the nonlinear shallow water equations are able to take into account the vanishing of the water height $h$ (see~\cite{LannesMetivier2018} for the mathematical analysis of this singularity and~\cite{AUDUSSE2005311} for its numerical  treatment); therefore this model is pertinent to describe the wave dynamics in the surf and swash areas. For these reasons, several authors propose to use a dispersive model and to switch the dispersive terms off in the vicinity of wave breaking~\cite{TonelliPetti,Bonneton2011,KazoleaRicchiuto2018}, hereby working locally with the less precise but more robust nonlinear shallow water equations. The coupling between the nonlinear shallow water equations and dispersive models like the Boussinesq and Serre-Green-Naghdi equation is however not well understood essentially because the boundary conditions that must be imposed on the dispersive component is unclear. A natural perspective of this work is therefore to investigate this coupling.

As pointed in~\cite{FILIPPINI2015109}, the shoaling phenomenon is systematically underestimated (resp. overestimated) by Boussinesq-type models written in elevation-discharge form (resp. in elevation-velocity form). Preliminary tests show that our reformulated Boussinesq-Abbott model is no different; it would thus be interesting to try circumventing this issue. An idea that we would like to explore would consist to perform some form of averaging between the Boussinesq-Abbott model in $(\zeta,q)$ coordinates and the classical Boussinesq-Peregrine model in $(\zeta,v)$ coordinates.

To conclude, we mention a long term goal related to the statistical study of extreme waves. This would require to generate random waves at the boundaries of the domain so as to force a realistic wave field in the domain. We presume that the use of asymptotically stable boundary conditions is crucial in this setting.

\clearpage
\printbibliography

\end{document}